\author{Henri Elad Altman \footnote{Email: henri.elad$\_$altman$@$upmc.fr}}
\affil{\footnotesize{Universit\'{e} Pierre et Marie Curie, LPMA, 4 Pl. Jussieu, 75005 Paris}}
\title{Bismut-Elworthy-Li formulae for Bessel processes}
\date{}
\theoremstyle{definition}
\newtheorem{nota}{Notation}
\newtheorem{rk}{Remark}
\theoremstyle{plain}
\newtheorem{lm}{Lemma}
\newtheorem{prop}{Proposition}
\newtheorem{thm}{Theorem}
\newtheorem{cor}{Corollary}
\begin{document}
\maketitle

\abstract
In this article we are interested in the differentiability property of the Markovian semi-group corresponding to the Bessel processes of nonnegative dimension. More precisely, for all $\delta \geq 0$ and $T>0$, we compute the derivative of the function $x \mapsto P^{\delta}_{T} F (x) $, where $(P^{\delta}_{t})_{t \geq 0}$ is the transition semi-group associated to the $\delta$ - dimensional Bessel process, and $F$ is any bounded Borel function on $\mathbb{R}_{+}$. The obtained expression shows a nice interplay between the transition semi-groups of the $\delta$ - and the $(\delta + 2)$-dimensional Bessel processes. As a consequence, we deduce that the Bessel processes satisfy the strong Feller property, with a continuity modulus which is independent of the dimension. Moreover, we provide a probabilistic interpretation of this expression as a Bismut-Elworthy-Li formula.

%



\section{Introduction}

Bessel processes are a one-parameter family of nonnegative diffusion processes with a singular drift, which present a reflecting behavior when they hit the origin. The smaller the parameter (called dimension), the more intense the reflection. Hence, studying the dynamics of these processes is a non-trivial problem, especially when the dimension is small. Despite these apparent difficulties, Bessel processes have remarkably nice properties. Therefore they provide an instructive insight in the study of stochastic differential equations (SDEs) with a singular drift, as well as the study of reflected SDEs. 

For all $x \geq 0$ and $\delta \geq 0$, the squared Bessel process of dimension $\delta$ started at $x^{2}$ is  the unique strong solution of the equation:
\begin{equation}
\label{sqbsde}
X_{t} = x^{2} + 2\int_{0}^{t} \sqrt{X_{s}} dB_{s} + \delta t.
\end{equation} 
Such a process $X$ is nonnegative, and the law of its square-root $\rho = \sqrt{X}$ is, by definition, the $\delta$-dimensional Bessel process started at $x$ (see \cite{revuz2013continuous}, section XI, or Chapter 3 of \cite{zambotti2017random} for an introduction to Bessel processes). The process $\rho$ satisfies the following SDE before its first hitting time $T_{0}$ of $0$:

\begin{equation*}
 \forall t \in [0, T_{0}), \qquad \rho_{t} = x + \frac{\delta - 1}{2} \int_{0}^{t} \frac{ds}{\rho_{s}} + B_{t}.
\end{equation*}

This is an SDE with non-Lipschitz continuous drift term given by the function $x \mapsto \frac{\delta-1}{2} \frac{1}{x}$ on $(0,+ \infty)$. Note that  when $ \delta < 1$, this function is nondecreasing on $\mathbb{R}_{+}$, and as $\delta$ decreases this "wrong" monotonicity becomes more and more acute. As a consequence, for $\delta$ small, the process $\rho$ is not mean-square differentiable, so that classical criteria for the Bismut-Elworthy-Li formula to hold (see \cite{cerrai2001second}, Section 1.5, and Section 2 below) do not apply here. Hence, one would not even expect such a formula to hold for $ \delta < 1$. For instance, even continuity of the flow is not known in this regime, see Remark \ref{rk5} below.

The aim of the present paper is to study the derivative in space of the family of transition kernels $(P^{\delta}_{T})_{T \geq 0}$ of the $\delta$-dimensional Bessel process. In a first part, we show that this derivative can be expressed in terms of the transition kernels of the $\delta$ - and the $(\delta +2)$-dimensional Bessel processes. More precisely, we prove that, for all function $F : \mathbb{R}_{+} \rightarrow  \mathbb{R}$ bounded and Borel, all $T>0$ and all $x \geq 0$, we have:

\begin{equation}
\label{eq1}
\frac{d}{dx}  P^{\delta}_{T} F (x) = \frac{x}{T} \left( P^{\delta +2}_{T} F (x) - P^{\delta}_{T} F (x) \right).
\end{equation}

As a consequence, the Bessel processes satisfy the strong Feller property uniformly in $\delta$.
In a second part, we interpret the above result probabilistically as a Bismut-Elworthy-Li formula. More precisely, given a realization $\rho$ of the Bessel process through the SDE \eqref{sqbsde}, we introduce the derivative  $\eta_{t}$ of $\rho_{t}$ with respect to the initial condition $x$, and show that when $\delta > 0$, the stochastic integral $\int_{0}^{t} \eta_{s} dB_{s}$ is well-defined as an $L^{p}$ martingale, for some $p > 1$ depending on $\delta$. Moreover, it turns out that $\int_{0}^{T} \eta_{s} dB_{s}$ is (up to a constant) the Radon-Nikodym derivative of the $(\delta+2)$-dimensional Bessel process over the interval $[0,T]$ w.r.t. the $\delta$-dimensional one. As a consequence, we deduce that the above equation can be rewritten:
\begin{equation} 
\label{eq2}
\frac{d}{dx}  P^{\delta}_{T} F (x) = \frac{1}{T} \mathbb{E} \left[ F(\rho_{t}(x)) \left( \int_{0}^{t} \eta_{s}(x) dB_{s} \right) \right] 
\end{equation}
which is an apparition, in an unexpected context, of the well-known Bismut-Elworthy-Li formula (see \cite{ELWORTHY1994252} for a precise statement and proof of the Bismut-Elworthy-Li formula in the case of diffusions with smooth coefficients).  

One surprising feature is that, while \eqref{eq1} is very easy to prove whatever the value of $\delta \geq 0$, on the other hand, the process $ (\int_{0}^{t} \eta_{s}(x) dB_{s})_{t\geq 0}$ has less and less finite moments as $\delta$ decreases, which makes the proof of \eqref{eq2} more involved for small $\delta$. In particular, this process is not in $L^{2}$ for $\delta < 2(\sqrt{2} -1)$, and when $\delta = 0$, we do not even know whether the stochastic integral $ \int_{0}^{t} \eta_{s}(x) dB_{s}$ is well-defined as a local martingale. 

This article was originally motivated by the hope to prove the strong Feller property for some singular reflected SDEs or SPDEs. Recently, several works have brought about new techniques to prove the strong Feller property for singular SPDEs. Thus, in \cite{tsatsoulis2016spectral}, the authors established this property for the $P(\Phi)_{2}$ equation, and in \cite{hairer2016strong}, the authors established it for a large class of singular semilinear SPDEs. The fact, mentioned above, that blowup of $\eta$ does not affect the strong Feller property of Bessel processes is reminiscent of the latter article, where the setting used to prove the strong Feller property allows blowup in finite time of the solution. Also, we hope that the techniques used in the present article might give inspiration to treat more general cases. Note that, even in the present context, where many computations can be performed explicitly, we still have an open problem concerning the Strong Feller bounds for Bessel processes of dimension $\delta \leq 2 (\sqrt{2} -1)$ (see Remark \ref{open_pbm} below).      

\medskip
The plan of our paper is as follows. In Section 2 we recall the classical Bismut-Elworthy-Li formula for diffusions in $\mathbb{R}$ with a dissipative drift, and show how this implies the strong Feller property. In Section 3 we recall the definition of Bessel processes and their basic properties. In Section 4 we compute the derivative of the Bessel semi-group. In section 5 we establish the differentiability of the Bessel flow at any given point in $\mathbb{R}^{*}_{+}$, and we give an expression for (some modification of) the derivative. In Section 6, we show that this derivative is not bounded in time when $\delta < 1$. We prove, however, that it is linked to an interesting martingale corresponding to the family of Radon-Nikodym derivatives of the $(\delta+2)$-dimensional Bessel process w.r.t.the $\delta$-dimensional one. In Section 7 we prove the Bismut-Elworthy-Li formula for the Bessel processes of dimension $\delta > 0$ . 


\section{Classical Bismut-Elworthy-Li formula for one-dimensional diffusions}

In this section we recall very briefly the Bismut-Elworthy-Li fomula in the case of one-dimensional diffusions, and the way this formula implies the strong Feller property. 

Consider an SDE on $\mathbb{R}$ of the form :
\begin{equation}
\label{one_dim_diff}
dX_{t} = b(X_{t}) dt + dB_{t} , \quad X_{0} = x
\end{equation}
where $b : \mathbb{R} \to \mathbb{R}$ is smooth and satisfies:
\begin{align}
\nonumber
|b(x) - b(y)| & \leq C |x-y|, \quad && x,y \in \mathbb{R} \\
\label{dissip_cond}
b'(x) & \leq L, \quad && x \in \mathbb{R}
\end{align}
where $C >0$, $L \in \mathbb{R}$ are some constants. By the classical theory of SDEs, for all $x \in \mathbb{R}$, there exists a unique continuous, square-integrable process $(X_{t}(x))_{t \geq 0}$ satisfying \eqref{one_dim_diff}. Actually, by the Lipschitz assumption on $b$, there even exists a bi-continuous process $(X_{t}(x))_{t \geq 0, x \in \mathbb{R}}$ such that, for all $x \in \mathbb{R}$, $(X_{t}(x))_{t \geq 0}$ solves (\ref{one_dim_diff}). 

Let $x \in \mathbb{R}$. Consider the  solution $(\eta_{t}(x))_{t \geq 0}$ to the variation equation obtained by formally differentiating  (\ref{one_dim_diff}) with respect to $x$:

\begin{equation*}
d\eta_{t}(x) = b'(X_{t}) \eta_{t}(x) dt , \quad \eta_{0}(x) = 1
\end{equation*}

Note that this is a (random) linear ODE with explicit solution given by:

\begin{equation*}
\eta_{t}(x) = \exp \left( \int_{0}^{t} b'(X_{s}) ds \right) 
\end{equation*}
 
It is easy to prove that, for all $t \geq 0$ and $x \in \mathbb{R}$, the map $y \to X_{t}(y)$ is a.s. differentiable at $x$ and:

\begin{equation} 
\label{dissip_derivative}
\frac{dX_{t}}{dx} \overset{a.s.}{=} \eta_{t}(x)
\end{equation}

\begin{rk}
Note that $\eta_{t}(x) > 0$ for all $t \geq 0$ and $x \in \mathbb{R}$. This reflects the fact that, for all $x \leq y$, by a comparison theorem for SDEs (see Theorem 3.7 in Chapter IX in \cite{revuz2013continuous}), one has $X_{t}(x) \leq X_{t} (y)$ .
\end{rk}

Recall that a Markovian semi-group $(P_{t})_{t \geq 0}$ on a Polish space $E$ is said to satisfy the strong Feller property if, for all $t > 0$ and $\varphi : E \to \mathbb{R}$ bounded and Borel, the function $P_{t} \varphi : E \to \mathbb{R}$ defined by:
\[ P_{t} \varphi (x) = \int \varphi(y) P_{t} (x,dy), \quad x \in \mathbb{R} \]
is continuous. 

The strong Feller property is very useful in the study of SDEs and SPDEs, namely for the proof of ergodicity (see, e.g., the monographs \cite{cerrai2001second}, \cite{da1996ergodicity} and \cite{zambotti2017random}, as well as the recent articles \cite{hairer2016strong} and \cite{tsatsoulis2016spectral}, for applications of the strong Feller property in the context of SPDEs).

Let $(P_{t})_{t \geq 0}$ be the Markovian semi-group associated to the SDE \eqref{sde}. We are interested in proving the strong Feller property for $(P_{t})_{t \geq 0}$. Note that, by assumption (\ref{dissip_cond}), $ \eta_{t}(x) \leq e^{Lt}$ for all $t \geq 0$ and $x \in \mathbb{R}$. Therefore, by (\ref{dissip_derivative}) and the dominated  convergence theorem, for all $\varphi : \mathbb{R} \to \mathbb{R}$ differentiable with a bounded derivative, one has:
\begin{equation*}
\frac{d}{dx}  \left( P_{t} \varphi \right) (x) = \frac{d}{dx} \mathbb{E} \left[ \varphi( X_{t}(x) ) \right] =  \mathbb{E} \left[ \varphi( X_{t}(x) )  \eta_{t}(x) \right]
\end{equation*}
As a consequence, for all $t \geq 0$, $P_{t}$ preserves the space $C^{1}_{b}(\mathbb{R})$ of bounded, continuously differentiable functions on $\mathbb{R}$ with a bounded derivative. It turns out that, actually, for all $t>0$, $P_{t}$ maps the space $ C_{b}(\mathbb{R})$ of bounded and  continuous functions into $ C^{1}_{b}(\mathbb{R})$. This is a consequence of the following, nowadays well-known, result:

\begin{thm}[Bismut-Elworthy-Li formula]
\label{belformula}
For all $T>0$ and $\varphi \in C_{b}(\mathbb{R})$, the function $P_{T} \varphi$ is differentiable and we have:
\begin{equation}
\frac{d}{dx} P_{T} \varphi (x) = \frac{1}{T} \mathbb{E} \left[ \varphi(X_{T}(x)) \int_{0}^{T} \eta_{s}(x) dB_{s} \right] 
\end{equation}
\end{thm}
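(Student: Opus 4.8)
The plan is to establish the formula first for smooth test functions $\varphi \in C^1_b(\mathbb{R})$ and then remove the regularity assumption by an approximation argument. For $\varphi \in C^1_b(\mathbb{R})$ we already know from the discussion above that $\frac{d}{dx} P_t\varphi(x) = \mathbb{E}[\varphi'(X_t(x))\eta_t(x)]$; equivalently, using the chain rule, $\frac{d}{dx} \mathbb{E}[\varphi(X_t(x))] = \mathbb{E}[\varphi'(X_t(x))\eta_t(x)]$. The idea is to integrate this identity in $t$ and exploit a martingale cancellation. Introduce, for fixed $T>0$, the process
\begin{equation*}
M_t = \mathbb{E}\big[ \varphi(X_T(x)) \mid \mathcal{F}_t \big], \qquad 0 \le t \le T,
\end{equation*}
which is a bounded martingale, hence by the martingale representation theorem admits the form $M_t = M_0 + \int_0^t H_s\, dB_s$ for some adapted, square-integrable process $H$. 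Note $M_0 = P_T\varphi(x)$ and $M_T = \varphi(X_T(x))$.

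First I would identify $H_s$ explicitly. Since $X$ is a Markov process, $M_t = (P_{T-t}\varphi)(X_t(x))$, and applying Itô's formula to the (smooth, since $\varphi\in C^1_b$ and the coefficients are smooth) function $(t,y)\mapsto P_{T-t}\varphi(y)$ together with the SDE \eqref{one_dim_diff}, the $dt$-terms cancel because $P_{T-t}\varphi$ solves the backward Kolmogorov equation, leaving $dM_t = (P_{T-t}\varphi)'(X_t(x))\, dB_t$. Thus $H_s = (P_{T-s}\varphi)'(X_s(x))$. Now I would compute $\mathbb{E}\big[\varphi(X_T(x)) \int_0^T \eta_s(x)\, dB_s\big]$ by writing $\varphi(X_T(x)) = M_0 + \int_0^T H_s\, dB_s$ and using the Itô isometry (both stochastic integrals are genuine $L^2$ martingales, since $|\eta_s|\le e^{Ls}$ and $H$ is bounded):
\begin{equation*}
\mathbb{E}\Big[ \varphi(X_T(x)) \int_0^T \eta_s(x)\, dB_s \Big] = \mathbb{E}\Big[ \int_0^T H_s\, \eta_s(x)\, ds \Big] = \int_0^T \mathbb{E}\big[ (P_{T-s}\varphi)'(X_s(x))\, \eta_s(x) \big]\, ds.
\end{equation*}
By the first-variation identity applied to the function $P_{T-s}\varphi \in C^1_b$ and the flow up to time $s$, together with the semigroup property, $\mathbb{E}[(P_{T-s}\varphi)'(X_s(x))\eta_s(x)] = \frac{d}{dx}\big(P_s(P_{T-s}\varphi)\big)(x) = \frac{d}{dx}(P_T\varphi)(x)$, which is independent of $s$. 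Hence the integral equals $T\cdot\frac{d}{dx}(P_T\varphi)(x)$, which is the claimed formula for $\varphi\in C^1_b$.

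Finally I would extend to arbitrary $\varphi\in C_b(\mathbb{R})$ by approximation: choose $\varphi_n\in C^1_b$ with $\varphi_n\to\varphi$ boundedly and pointwise (e.g. mollification), so that $\sup_n\|\varphi_n\|_\infty<\infty$. The right-hand side $\frac{1}{T}\mathbb{E}[\varphi_n(X_T(x))\int_0^T\eta_s(x)\,dB_s]$ converges by dominated convergence (the stochastic integral is a fixed $L^1$, indeed $L^2$, random variable and $\varphi_n(X_T(x))\to\varphi(X_T(x))$ boundedly) to $\frac{1}{T}\mathbb{E}[\varphi(X_T(x))\int_0^T\eta_s(x)\,dB_s]$; moreover this limit, as a function of $x$, is continuous — one checks continuity in $x$ using bicontinuity of the flow $(x,t)\mapsto X_t(x)$ and continuity of $x\mapsto\int_0^T\eta_s(x)\,dB_s$ in $L^2$, which follows from continuous dependence of the linear variation equation on $x$. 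Since $P_T\varphi_n\to P_T\varphi$ pointwise and their derivatives converge uniformly on compacts to a continuous limit, $P_T\varphi$ is differentiable with the asserted derivative. The main obstacle is the identification of the integrand $H_s$ in the martingale representation — i.e. justifying the application of Itô's formula to $P_{T-t}\varphi$, which requires enough regularity of $(t,y)\mapsto P_{T-t}\varphi(y)$; this is where the smoothness of $b$ and the parabolic regularization of the Kolmogorov equation are used. All remaining steps are routine once that identification is in place.
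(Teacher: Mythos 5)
Your argument is correct and is essentially the proof that the paper delegates to its references (Elworthy--Li, Theorem 2.1, and Zambotti, Lemma 5.17): write $\varphi(X_T(x)) = P_T\varphi(x) + \int_0^T (P_{T-s}\varphi)'(X_s(x))\,dB_s$ via the Markov property and It\^o's formula, pair this with $\int_0^T\eta_s(x)\,dB_s$ through the It\^o isometry, and observe that the first-variation identity combined with the semigroup property makes the integrand $\mathbb{E}\left[(P_{T-s}\varphi)'(X_s(x))\,\eta_s(x)\right]$ equal to $\frac{d}{dx}P_T\varphi(x)$ for every $s$, so the time integral contributes the factor $T$. The only two places deserving an extra word of care are the regularity of $(t,y)\mapsto P_{T-t}\varphi(y)$ needed to apply It\^o's formula (which you correctly flag as the main technical point) and, in the final approximation step, the \emph{locally uniform} --- not merely pointwise --- convergence of $(P_T\varphi_n)'$, which is what actually licenses differentiating the limit; the latter follows by splitting the expectation on $\{|X_T(x)|>R\}$ and its complement and using the uniform $L^2$ bound $\|\int_0^T\eta_s\,dB_s\|_2\le e^{LT}\sqrt{T}$, so both points are routine in this dissipative setting.
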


\begin{proof}
See \cite{ELWORTHY1994252}, Theorem 2.1, or \cite{zambotti2017random}, Lemma 5.17 for a proof.
\end{proof}

\begin{cor}
\label{dissip_strong_feller}
The semi-group $(P_{t})_{t \geq 0}$ satisfies the strong Feller property and, for all $T>0$ and $\varphi : \mathbb{R} \to \mathbb{R}$ bounded and Borel, one has:
\begin{equation}
\label{sgpcontinuity}
\forall x,y \in \mathbb{R},\quad |P_{T} \varphi (x) - P_{T} \varphi (y)| \leq e^{L} \frac{||\varphi||_{\infty}}{\sqrt{T \wedge 1}} |x - y|,
\end{equation}
where $||\cdot||_{\infty}$ denotes the supremum norm.
\end{cor}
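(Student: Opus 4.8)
The plan is to feed the Bismut--Elworthy--Li formula of Theorem~\ref{belformula} into the fundamental theorem of calculus, bound the resulting stochastic integral with the It\^{o} isometry and the pointwise estimate on $\eta$, and then dispose of large times by the Markov property and of Borel test functions by a monotone class argument. Observe first that we may assume $L \geq 0$, since \eqref{dissip_cond} also holds with $L$ replaced by $\max(L,0)$.

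\emph{Step 1: continuous $\varphi$ and short times.} Fix $\varphi \in C_b(\mathbb{R})$ and $0 < T \leq 1$. Since $\eta(x)$ is adapted and bounded, $t \mapsto \int_0^t \eta_s(x)\,dB_s$ is a square-integrable martingale, and by the It\^{o} isometry together with $0 \leq \eta_s(x) \leq e^{Ls} \leq e^{L}$ for $s \leq T \leq 1$,
\[
\mathbb{E}\left[\left(\int_0^T \eta_s(x)\,dB_s\right)^{2}\right] \;=\; \mathbb{E}\left[\int_0^T \eta_s(x)^{2}\,ds\right] \;\leq\; T\,e^{2L}.
\]
Plugging this into Theorem~\ref{belformula} and using $|\varphi|\leq\|\varphi\|_\infty$ and the Cauchy--Schwarz inequality,
\[
\left|\frac{d}{dx}P_T\varphi(x)\right| \;\leq\; \frac{\|\varphi\|_\infty}{T}\;\mathbb{E}\left[\left|\int_0^T \eta_s(x)\,dB_s\right|\right] \;\leq\; \frac{\|\varphi\|_\infty}{T}\,\sqrt{T\,e^{2L}} \;=\; \frac{e^{L}\|\varphi\|_\infty}{\sqrt{T}}\,.
\]
Since $P_T\varphi \in C^1_b(\mathbb{R})$ by Theorem~\ref{belformula}, integrating this bound (which is uniform in $x$) along the segment joining $x$ and $y$ yields $|P_T\varphi(x) - P_T\varphi(y)| \leq e^{L}\|\varphi\|_\infty\,|x-y|/\sqrt{T}$, which is \eqref{sgpcontinuity} because $T\wedge 1 = T$.

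\emph{Step 2: continuous $\varphi$ and arbitrary times.} For $T > 1$, the semi-group property gives $P_T\varphi = P_1(P_{T-1}\varphi)$. The function $\psi := P_{T-1}\varphi$ lies in $C^1_b(\mathbb{R})\subset C_b(\mathbb{R})$ by Theorem~\ref{belformula}, and $\|\psi\|_\infty \leq \|\varphi\|_\infty$ since $P_{T-1}$ is a Markov kernel. Applying Step~1 to $\psi$ with time horizon $1$ gives $|P_T\varphi(x) - P_T\varphi(y)| = |P_1\psi(x) - P_1\psi(y)| \leq e^{L}\|\varphi\|_\infty\,|x-y|$, which is \eqref{sgpcontinuity} since here $T\wedge 1 = 1$. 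Hence \eqref{sgpcontinuity} holds for every $T>0$ and every $\varphi\in C_b(\mathbb{R})$.

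\emph{Step 3: Borel $\varphi$ and conclusion.} Let $\mathcal{H}$ be the set of bounded Borel functions $\varphi:\mathbb{R}\to\mathbb{R}$ for which \eqref{sgpcontinuity} holds for all $T>0$ and all $x,y$. It is a vector space containing the constants and, by Steps~1--2, all of $C_b(\mathbb{R})$; moreover, if $0\leq\varphi_n\uparrow\varphi$ with $\varphi$ bounded, then $P_T\varphi_n(x)\uparrow P_T\varphi(x)$ by monotone convergence while $\|\varphi_n\|_\infty\leq\|\varphi\|_\infty$, so \eqref{sgpcontinuity} passes to the limit and $\varphi\in\mathcal{H}$. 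As $C_b(\mathbb{R})$ is stable under products and generates the Borel $\sigma$-field, the functional monotone class theorem gives $\mathcal{H}=$ all bounded Borel functions; and \eqref{sgpcontinuity} makes each $P_T\varphi$ Lipschitz, hence continuous, so $(P_t)_{t\geq 0}$ is strong Feller. The only points demanding any care are that Theorem~\ref{belformula} yields a clean estimate only over a bounded time interval (because of the factor $e^{Ls}$ in $\eta_s$), which forces the detour through the Markov property in Step~2, and the bookkeeping of the constant $\|\varphi\|_\infty$ in Step~3 so that the bound survives the limit.
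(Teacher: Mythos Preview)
Your proof is correct and takes essentially the same approach as the paper: bound the derivative via the Bismut--Elworthy--Li formula and the It\^{o} isometry for $T\leq 1$, extend to $T>1$ via the semigroup property, and pass from $C_b$ to bounded Borel test functions by approximation. Your monotone class argument in Step~3 and the explicit reduction to $L\geq 0$ spell out what the paper compresses into the single phrase ``by approximation'', but the structure and ideas are identical.
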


The following remark is crucial.

\begin{rk}
Inequality (\ref{sgpcontinuity}) involves only the dissipativity constant $L$, not the Lipschitz constant $C$. This makes the Bismut-Elworthy-Li formula very useful in the study of SPDEs with a dissipative drift.
\end{rk}  

\begin{proof}[Proof of Corollary \ref{dissip_strong_feller}] 
By approximation, it suffices to prove (\ref{sgpcontinuity}) for $\varphi \in C_{b}(\mathbb{R})$. For such a $\varphi$ and for all $T > 0$, by the Bismut-Elworthy-Li formula, one has:
\[ \left|  \frac{d}{dx} P_{T} \varphi (x) \right| \leq \frac{||\varphi||_{\infty}}{T} \mathbb{E} \left[ \left| \int_{0}^{T} \eta_{s}(x) dB_{s} \right| \right] \]
Remark that the process $(\eta_{t}(x))_{t \geq0}$ is locally bounded since it is dominated by $(e^{Lt})_{t \geq 0}$, so that the stochastic integral $\left( \int_{0}^{t} \eta_{s}(x) dB_{s} \right)_{t \geq 0}$ is an $L^{2}$ martingale. Hence using Jensen's inequality as well as It\^{o}'s isometry formula, we obtain:
\begin{align*}
\mathbb{E} \left[ | \int_{0}^{T} \eta_{s}(x) dB_{s} | \right] &\leq \sqrt{ \mathbb{E} \left[ \int_{0}^{T} \eta_{s}(x)^{2} ds | \right] } \\
 & \leq \sqrt{ \int_{0}^{T} e^{2Ls} \ ds } 
 \end{align*}
and the last quantity is bounded by  $\sqrt{e^{2L} T} = e^{L} \sqrt{T}$ for all $T \in (0,1]$. Therefore, we deduce that:
\[ \forall x \in \mathbb{R}, \quad \left|  \frac{d}{dx} P_{T} \varphi (x) \right| \leq e^{L} \frac{||\varphi||_{\infty}}{\sqrt{T}} \]
so that:
\[ \forall x, y \in \mathbb{R}, \quad \left| P_{T} \varphi (x) - P_{T} \varphi (y) \right| \leq e^{L} \frac{||\varphi||_{\infty}}{\sqrt{T}} |x-y| \]
for all $\varphi \in C_{b}(\mathbb{R})$ and $T \in (0,1]$. The case $T >1$ follows at once by using the semi-group property of $(P_{t})_{t \geq 0}$:
\begin{align*}
\left| P_{T} \varphi (x) - P_{T} \varphi (y) \right| & = \left| P_{1} \left( P_{T-1} \varphi \right) (x) - P_{1} \left( P_{T-1} \varphi \right) (y) \right| \\
& \leq e^{L} \frac{||P_{T-1} \varphi||_{\infty}}{\sqrt{1}} |x-y| \\
& \leq e^{L} ||\varphi||_{\infty} |x-y|
\end{align*}
The claim follows.
\end{proof}

\begin{rk}[A brief history of the Bismut-Elworthy-Li formula]
A particular form of this formula had originally been derived by J.M. Bismut in \cite{bismut1984large} using Malliavin calculus in the framework of the study of the logarithmic derivative of the fundamental solution of the heat equation on a compact manifold. In \cite{ELWORTHY1994252}, K.D. Elworthy and X.-M. Li generalized this formula to a large class of diffusion processes on manifolds with smooth coefficients, and gave also variants of this formula to higher-order derivatives.
\end{rk}

The key property allowing the analysis performed in this section is the dissipativity property (\ref{dissip_cond}). Without this property being true, one would not even expect the Bismut-Elworthy-Li formula to hold. However, in the sequel, we shall prove that results such as Theorem \ref{belformula} and Corollary \ref{dissip_strong_feller} above can also be obtained for a family of diffusions with a non-dissipative drift (informally $L= +\infty$) , namely for the Bessel processes of  dimension smaller than $1$.


\section{Bessel processes: notations and basic facts}
\label{basicsection}
 
In the sequel, for any subinterval $I$ of $\mathbb{R}_{+}$, $C(I)$ will denote the set of continuous functions $I \rightarrow \mathbb{R}$. We shall consider this set endowed with the topology of uniform convergence on compact sets, and will denote by $\mathcal{B}(C(I))$ the corresponding Borel $\sigma$-algebra.

Consider the canonical measurable space $(C(\mathbb{R}_{+}), \mathcal{B}(C(I)))$ endowed with the canonical filtration $(\mathcal{F}_{t})_{t \geq 0}$. Let $(B_{t})_{t \geq 0}$ be a standard linear $(\mathcal{F}_{t})_{t \geq 0}$-Brownian motion. For all $x \geq 0$ and $\delta \geq 0$, there exists a unique continuous, predictable, nonnegative process $(X^{\delta}_{t}(x))_{t \geq 0}$ satisfying:
\begin{equation}
\label{sde}
X_{t} = x^{2} + 2\int_{0}^{t} \sqrt{X_{s}} dB_{s} + \delta t.
\end{equation} 
$(X^{\delta}_{t}(x))_{t \geq 0}$ is a squared Bessel process of dimension $\delta$ started at $x^{2}$, and the process $\rho^{\delta}_{t}(x) := \sqrt{X^{\delta}_{t}(x)} $ is a $\delta$-dimensional Bessel process started at $x$. In the sequel, we will also write the latter process as $(\rho_{t}(x))_{t \geq 0}$, or $\rho$, when there is no risk of ambiguity. 

We recall the following monotonicity property of the family of Bessel processes:

\begin{lm}
\label{weakcomp}
For all couples $(\delta, \delta'),(x,x') \in \mathbb{R}_{+}$ such that $\delta \leq \delta'$ and $x \leq x'$, we have, a.s.:
\[ \forall t \geq 0, \qquad {\rho}^{\delta}_{t}(x) \leq {\rho}^{\delta'}_{t}(x'). \]
\end{lm}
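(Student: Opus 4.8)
The plan is to lift the comparison to the level of the squared Bessel processes, where both the dimension and the starting point enter the SDE \eqref{sde} in a monotone way, and then take square roots. First I would fix a single Brownian motion $(B_t)_{t\geq 0}$ and consider simultaneously the two strong solutions $X := X^{\delta}(x)$ and $X' := X^{\delta'}(x')$ of \eqref{sde}, driven by this common $B$. Since $x \leq x'$ we have $x^2 \leq (x')^2$, and since $\delta \leq \delta'$ the additive drift term $\delta t$ is pointwise dominated by $\delta' t$; the only term that is not monotone in the solution is the diffusion coefficient $2\sqrt{X_s}$, but this is exactly the classical situation covered by a one-dimensional comparison theorem for SDEs with a Hölder-continuous (here, square-root) diffusion coefficient. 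The relevant statement is the Yamada–Watanabe-type comparison theorem (see Theorem 3.7 in Chapter IX of \cite{revuz2013continuous}, or the Ikeda–Watanabe comparison theorem): if two diffusions are driven by the same Brownian motion, have the same diffusion coefficient which is $\tfrac12$-Hölder, ordered initial conditions, and ordered drifts, then the solutions stay ordered a.s. Applying this gives $X_t \leq X'_t$ for all $t \geq 0$, a.s., and hence $\rho^{\delta}_t(x) = \sqrt{X_t} \leq \sqrt{X'_t} = \rho^{\delta'}_t(x')$ for all $t \geq 0$, a.s., which is the claim.

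One point that needs a little care is that the comparison theorem as usually stated requires the drifts to be compared along one of the two solutions, i.e. one needs $b^{\delta}(y) \leq b^{\delta'}(y)$ for all $y$ where $b^{\delta}(y) = \delta$, $b^{\delta'}(y) = \delta'$ are the (constant) drift functions of the squared Bessel SDEs; this is immediate from $\delta \leq \delta'$, so there is no subtlety here. Another point is that \eqref{sde} a priori only defines the solution through the canonical setup, so to run the comparison I would either invoke that pathwise uniqueness holds for \eqref{sde} (which is part of the stated existence/uniqueness result, and follows from the Yamada–Watanabe condition on $z\mapsto\sqrt z$) — and hence the strong solutions built on the same $B$ are the objects to which the comparison theorem applies — or simply note that the comparison theorem is itself a pathwise statement about any two such solutions on the same probability space.

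The main obstacle, such as it is, is purely bookkeeping: ensuring that the version of the comparison theorem invoked genuinely allows a merely Hölder-$\tfrac12$ diffusion coefficient (Lipschitz is not available for $\sqrt{\cdot}$ near $0$) and allows the two solutions to have \emph{different} drifts rather than the same drift. Both features are standard and are precisely what the cited Theorem 3.7 in Chapter IX of \cite{revuz2013continuous} provides. Once that is in hand, the proof is a two-line deduction: order the squared processes, then take square roots. I would therefore keep the write-up short, citing the comparison theorem for squared Bessel processes and then passing to $\rho = \sqrt{X}$.
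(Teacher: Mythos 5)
Your proposal is correct and is essentially identical to the paper's own proof: both apply the comparison theorem (Theorem 3.7, Chapter IX of \cite{revuz2013continuous}) to the squared Bessel SDE \eqref{sde}, using that the square-root diffusion coefficient is H\"older-$\tfrac12$ and that the constant drifts and initial conditions are ordered, and then take square roots. The extra bookkeeping you flag is fine but not needed beyond the citation.
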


\begin{proof}
By Theorem (3.7) in \cite{revuz2013continuous}, Section IX, applied to the equation (\ref{sde}), the following property holds a.s.:
\[ \forall t \geq 0, \qquad X^{\delta}_{t}(x) \leq X^{\delta'}_{t}(x'). \]
Taking the square root on both sides above, we deduce the result.
\end{proof}

For all $a \geq 0$, let $T_{a}(x)$ denote the $(\mathcal{F}_{t})_{t \geq 0}$ stopping time defined by:
\[ T_{a}(x) := \inf \{ t > 0, \rho_{t}(x) \leq a \} \]
(we shall also write $T_{a}$). We recall the following fact, (see e.g. Proposition 3.6 of \cite{zambotti2017random}):

\begin{prop}
The following dichotomy holds:
\begin{itemize}
\item $T_{0}(x) = + \infty$ a.s., if $\delta \geq 2$,
\item $T_{0}(x) < + \infty$ a.s., if $0 \leq \delta < 2$.  
\end{itemize}
\end{prop}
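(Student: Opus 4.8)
The plan is to view $\rho=\rho^{\delta}(x)$, before it hits $0$, as a regular one-dimensional diffusion on $(0,+\infty)$, to compute its scale function, and to read the dichotomy off the behaviour of that scale function at the origin.

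First I would note that, by It\^o's formula applied to the square root in \eqref{sde} (which is licit as long as $\rho_{t}>0$), the process $\rho$ satisfies $d\rho_{t}=\tfrac{\delta-1}{2\rho_{t}}\,dt+dB_{t}$ on any time interval on which it stays positive; hence, before $T_{0}$, it is the diffusion on $(0,+\infty)$ with generator $\mathcal{L}=\tfrac12\partial_{xx}+\tfrac{\delta-1}{2x}\partial_{x}$. A scale function $s$ solves $\mathcal{L}s=0$, i.e.\ $s''/s'=-(\delta-1)/x$, so $s'(x)\propto x^{1-\delta}$, and one may take $s(x)=x^{2-\delta}$ for $0\le\delta<2$, $s(x)=\log x$ for $\delta=2$, and $s(x)=-x^{2-\delta}$ for $\delta>2$; in each case $s$ is continuous and increasing on $(0,+\infty)$. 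The decisive observation is that $s(0^{+})$ is finite (equal to $0$) when $\delta<2$, whereas $s(0^{+})=-\infty$ when $\delta\ge2$; moreover $s(+\infty)=+\infty$ whenever $\delta\le2$.

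For $0<a<x<b$, set $\sigma_{b}:=\inf\{t>0:\rho_{t}\ge b\}$, stop the bounded continuous local martingale $s(\rho_{\cdot\wedge T_{a}\wedge\sigma_{b}})$, and apply the optional stopping theorem (using that $\rho$ exits the bounded interval $(a,b)$ in finite time a.s.) to get the classical exit identity $\mathbb{P}_{x}(T_{a}<\sigma_{b})=\tfrac{s(b)-s(x)}{s(b)-s(a)}$. Letting $a\downarrow0$, which makes $T_{a}\uparrow T_{0}$ by continuity of paths, one passes to the limit. If $\delta\ge2$, the denominator diverges, so $\mathbb{P}_{x}(T_{0}<\sigma_{b})=0$ for every $b>x$; since a path hitting $0$ at a finite time is bounded on the corresponding compact time interval, $\{T_{0}<\infty\}=\bigcup_{b\in\mathbb{N}}\{T_{0}<\sigma_{b}\}$ up to null sets, and letting $b\to\infty$ yields $\mathbb{P}_{x}(T_{0}<\infty)=0$. (An alternative for $\delta\ge2$: by Lemma \ref{weakcomp}, $\rho^{\delta}_{t}(x)\ge\rho^{2}_{t}(x)$, and $\rho^{2}$ is distributed as the modulus of a planar Brownian motion, which a.s.\ never returns to the origin at positive times.) If $0\le\delta<2$, then $s(0^{+})=0$, and for any fixed $a\in(0,x)$ the identity gives $\mathbb{P}_{x}(T_{a}<\sigma_{b})=\tfrac{s(b)-s(x)}{s(b)-s(a)}\to1$ as $b\to\infty$, so $\mathbb{P}_{x}(T_{a}<\infty)=1$ for every $a\in(0,x)$, i.e.\ $\inf_{t}\rho_{t}=0$ a.s. To upgrade this to $T_{0}<\infty$ a.s., I would use that the process $t\mapsto s(\rho_{t\wedge T_{0}})=\rho_{t\wedge T_{0}}^{2-\delta}$ is a nonnegative supermartingale, hence converges a.s.; on $\{T_{0}=\infty\}$ this forces $\rho_{t}$ to converge to some $\rho_{\infty}\in[0,+\infty)$, but a finite positive limit is impossible (a non-degenerate diffusion cannot remain forever in a compact subinterval of $(0,+\infty)$), and $\rho_{\infty}=0$ with $T_{0}=\infty$ is impossible as well, since for $\delta>0$ it would force the continuous local martingale $N_{t}:=2\int_{0}^{t}\sqrt{X_{s}}\,dB_{s}=X_{t}-x^{2}-\delta t$ to tend to $-\infty$, which no continuous local martingale can do. Hence $T_{0}<\infty$ a.s.\ (the borderline $\delta=0$, where the $0$-dimensional squared Bessel process is absorbed at $0$ in finite time, is classical). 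Equivalently one may invoke Feller's test: the hypotheses ensuring that $0$ is an accessible boundary---namely $s(0^{+})>-\infty$ together with finiteness of $\int_{0^{+}}\bigl(\int_{y}^{x_{0}}m(dz)\bigr)\,ds(y)$ for the speed measure $m(dx)\propto x^{\delta-1}dx$---hold precisely when $\delta<2$. Finally, for $x=0$ the claim is trivial when $\delta<2$ (then $T_{0}(0)=0$) and follows from the Bessel comparison above when $\delta\ge2$.

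The step I expect to be the main obstacle is the direction $\delta<2$: the scale-function computation alone does \emph{not} settle it, because finiteness of $s(0^{+})$ only says that $0$ is reachable ``in scale''; one genuinely needs the extra input carried by the speed measure (or the convergence/local-martingale argument sketched above) to conclude that $0$ is actually attained, and in finite time. By contrast, the direction $\delta\ge2$ is a soft consequence of optional stopping (or of the planar Brownian comparison). Apart from this, the remaining points are routine bookkeeping: that $T_{a}\uparrow T_{0}$ as $a\downarrow0$, that the diffusion exits bounded subintervals of $(0,+\infty)$ a.s., and the handling of the double limit $a\downarrow0$, $b\to\infty$ and of the degenerate starting point $x=0$.
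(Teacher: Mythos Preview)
The paper does not give a proof of this proposition; it is recalled as a known fact with a reference to \cite{zambotti2017random}, Proposition~3.6. Your argument is the standard scale-function approach and is correct. For $\delta\ge 2$ the optional-stopping computation with $s(0^{+})=-\infty$ (or the comparison with planar Brownian motion via Lemma~\ref{weakcomp}) settles the matter cleanly. For $0<\delta<2$ your chain of reasoning---$s(\rho_{t\wedge T_0})=\rho_{t\wedge T_0}^{\,2-\delta}$ is a nonnegative continuous local martingale, hence a supermartingale, hence converges a.s.; a strictly positive finite limit is excluded because the diffusion a.s.\ exits every compact subinterval of $(0,\infty)$; the limit $0$ together with $T_0=\infty$ is excluded by the local-martingale dichotomy applied to $N_t=X_t-x^2-\delta t$---is sound. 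The middle step could use one explicit line (strong Markov plus your exit identity), but in fact the same $N_t$ argument already rules out positive limits as well when $\delta>0$, since $\rho_t\to c>0$ forces $\langle N\rangle_\infty=4\int_0^\infty \rho_s^2\,ds=\infty$ while $N_t\to c^2-x^2-\delta t\to-\infty$.

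Your only loose end is $\delta=0$, where the $N_t$ argument no longer excludes the event $\{T_0=\infty,\ \rho_t\to 0\}$ and you defer to ``classical''. A one-line fix is the explicit absorption law $P^{0}_{x}(T_0\le t)=P^{0}_{x}(\rho_t=0)=\exp(-x^2/2t)\to 1$ as $t\to\infty$, which the paper itself quotes elsewhere (equation~\eqref{sgp_zero} and the proof of Proposition~\ref{derflow}). Given that the paper treats the whole statement as background, your level of detail is already more than adequate.
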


Applying It\^{o}'s lemma to $\rho_{t} = \sqrt{X^{\delta}_{t}(x)}$, we see that $\rho$ satisfies the following relation on the interval $[0, T_{0})$:
\begin{equation}
\label{sdebessel}
 \forall t \in [0, T_{0}), \qquad \rho_{t} = x + \frac{\delta - 1}{2} \int_{0}^{t} \frac{ds}{\rho_{s}} + B_{t} .
\end{equation} 


\section{Derivative in space of the Bessel semi-group}

Let $\delta \geq 0$. We denote by $P^{\delta}_{x}$ the law, on $(C(\mathbb{R}_{+}), \mathcal{B}(C(\mathbb{R}_{+})))$, of the $\delta$-dimensional Bessel process started at x, and we write $E^{\delta}_{x}$ for the corresponding expectation operator. We also denote by $(P^{\delta}_{t})_{t \geq 0}$ the family of transition kernels associated with the $\delta$-dimensional Bessel process, defined by
\[ P^{\delta}_{t} F(x) := E^{\delta}_{x}(F(\rho_{t})) \]
for all $t \geq 0 $ and all $F : \mathbb{R}_{+} \to \mathbb{R}$ bounded and Borel. The aim of this section is to prove the following:

\begin{thm}
\label{thmana}
For all $T>0$ and all $F : \mathbb{R}_{+} \to \mathbb{R}$ bounded and Borel, the function $x \to P^{\delta}_{t} F (x)$ is differentiable on $\mathbb{R}_{+}$, and for all $x \geq 0$:
\begin{equation}
\label{bel2}
\frac{d}{dx}  P^{\delta}_{T} F (x) = \frac{x}{T} \left( P^{\delta +2}_{T} F (x) - P^{\delta}_{T} F (x) \right). 
\end{equation}
In particular, the function $x \to P^{\delta}_{t} F (x)$ satisfies the Neumann boundary condition at $0$:
\[ \frac{d}{dx} P^{\delta}_{T} F (x) \Big\rvert_{x=0} = 0 . \]
\end{thm}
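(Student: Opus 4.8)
The plan is to exploit the explicit transition density of the Bessel semi-group and to differentiate under the integral sign. Recall (see e.g. \cite{revuz2013continuous}, Chapter XI) that, for $\delta>0$ and $x>0$, the measure $P^{\delta}_{t}(x,\cdot)$ is absolutely continuous with respect to Lebesgue measure on $(0,+\infty)$, with density
\begin{equation*}
q^{\delta}_{t}(x,y) \;=\; \frac{y}{t}\left(\frac{y}{x}\right)^{\nu}\exp\!\left(-\frac{x^{2}+y^{2}}{2t}\right) I_{\nu}\!\left(\frac{xy}{t}\right), \qquad \nu := \frac{\delta}{2}-1 ,
\end{equation*}
where $I_{\nu}$ is the modified Bessel function of the first kind of index $\nu$; for $x=0$ the density is the Gamma-type density obtained as the $x\to 0$ limit of the above, and only when $\delta=0$ is there in addition an atom at the origin, of mass $\exp(-x^{2}/(2t))$, coming from absorption at $0$. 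The aim is to show that one may write $P^{\delta}_{T}F(x)=\int_{0}^{\infty}q^{\delta}_{T}(x,y)F(y)\,dy$ (plus the atomic term when $\delta=0$) and differentiate in $x$ term by term.

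The heart of the argument is the pointwise identity
\begin{equation}
\label{keyid}
\frac{\partial}{\partial x}\, q^{\delta}_{t}(x,y) \;=\; \frac{x}{t}\left( q^{\delta+2}_{t}(x,y) - q^{\delta}_{t}(x,y)\right), \qquad x,y>0,
\end{equation}
which I would verify by direct computation: writing $q^{\delta}_{t}(x,y)=\frac{y^{\nu+1}}{t}\,x^{-\nu}\exp(-(x^{2}+y^{2})/(2t))\,I_{\nu}(xy/t)$ and using the classical relation $\frac{d}{dz}\bigl(z^{-\nu}I_{\nu}(z)\bigr)=z^{-\nu}I_{\nu+1}(z)$, the product rule gives
\begin{equation*}
\frac{\partial}{\partial x}\, q^{\delta}_{t}(x,y)= -\frac{x}{t}\, q^{\delta}_{t}(x,y) + \frac{y^{\nu+2}}{t^{2}}\,x^{-\nu}\exp\!\left(-\frac{x^{2}+y^{2}}{2t}\right) I_{\nu+1}\!\left(\frac{xy}{t}\right),
\end{equation*}
and since passing from $\delta$ to $\delta+2$ replaces $\nu$ by $\nu+1$, the last term is precisely $\frac{x}{t}\,q^{\delta+2}_{t}(x,y)$, which is \eqref{keyid}. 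In the case $\delta=0$ one checks separately that the atomic part contributes $\frac{\partial}{\partial x}\bigl(F(0)e^{-x^{2}/(2t)}\bigr)=-\frac{x}{t}F(0)e^{-x^{2}/(2t)}$, which is consistent with \eqref{bel2} because $P^{\delta+2}_{T}=P^{2}_{T}$ charges no atom at the origin.

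It then remains to integrate \eqref{keyid} against $F$ and to justify the interchange of $\frac{d}{dx}$ and $\int\cdot\,dy$. For this I would use the standard asymptotics $I_{\nu}(z)\sim (z/2)^{\nu}/\Gamma(\nu+1)$ as $z\to 0^{+}$ and $I_{\nu}(z)\sim e^{z}/\sqrt{2\pi z}$ as $z\to+\infty$: on any compact set of values of $x$, both $q^{\delta}_{t}(x,y)$ and $\partial_{x}q^{\delta}_{t}(x,y)$ are dominated by a fixed function of $y$ that is integrable on $(0,\infty)$, since the Gaussian factor $\exp(-(x^{2}+y^{2})/(2t))$ dominates the Bessel factor (note $x^{2}+y^{2}-2xy=(x-y)^{2}\ge 0$). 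Dominated convergence then yields, for every $x>0$,
\begin{equation*}
\frac{d}{dx}\, P^{\delta}_{T}F(x) \;=\; \int_{0}^{\infty}\frac{\partial}{\partial x}\, q^{\delta}_{T}(x,y)\,F(y)\,dy \;=\; \frac{x}{T}\left( P^{\delta+2}_{T}F(x) - P^{\delta}_{T}F(x)\right),
\end{equation*}
which is \eqref{bel2} on $(0,+\infty)$. Finally, since $|P^{\delta+2}_{T}F(x)-P^{\delta}_{T}F(x)|\le 2\|F\|_{\infty}$, the right-hand side is $O(x)$ as $x\to 0^{+}$; integrating, $|P^{\delta}_{T}F(x)-P^{\delta}_{T}F(0)|\le \|F\|_{\infty}x^{2}/T=o(x)$, so $P^{\delta}_{T}F$ is differentiable at $0$ with vanishing derivative, which is the asserted Neumann condition, and \eqref{bel2} extends to $x=0$.

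The step I expect to be the most delicate is the domination near the boundary $x=0$: for $\delta<2$ one has $\nu<0$, and $I_{\nu}(xy/t)$ behaves like $(xy/t)^{\nu}$ there, so one has to check carefully that the singular factor $x^{-\nu}$ is compensated and that the bounds are genuinely uniform for $x$ in a neighbourhood of $0$. An alternative route, which sidesteps both these density estimates and the atom, is to use the non-central chi-squared description of the squared Bessel process: $X^{\delta}_{t}(x)/t$ has a non-central $\chi^{2}$ law with $\delta$ degrees of freedom and non-centrality $x^{2}/t$, hence is a mixture over $N\sim\mathrm{Poisson}(x^{2}/(2t))$ of $\chi^{2}_{\delta+2N}$ laws, so that, setting $\tilde F(a):=F(\sqrt a)$ and denoting by $Q^{\gamma}_{t}$ the squared-Bessel semi-group of dimension $\gamma$,
\begin{equation*}
P^{\delta}_{t}F(x) \;=\; \sum_{n\ge 0} e^{-x^{2}/(2t)}\,\frac{(x^{2}/(2t))^{n}}{n!}\, Q^{\delta+2n}_{t}\tilde F(0) ;
\end{equation*}
differentiating this series in $x$ (the convergence being uniform on compacts, with $|Q^{\gamma}_{t}\tilde F(0)|\le\|F\|_{\infty}$) and observing that the reindexing $n\mapsto n+1$ turns $\delta$ into $\delta+2$ gives \eqref{bel2} at once, for every $x\ge 0$ and every $\delta\ge 0$.
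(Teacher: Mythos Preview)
Your proof is correct and follows essentially the same route as the paper: differentiate the explicit Bessel transition density in $x$ and identify the result as $\frac{x}{T}\bigl(q^{\delta+2}_{T}-q^{\delta}_{T}\bigr)$, handling the $\delta=0$ atom separately. The only real difference is cosmetic: the paper expands $I_{\nu}$ as a power series in $x$ and differentiates term by term, whereas you use the closed-form identity $\frac{d}{dz}\bigl(z^{-\nu}I_{\nu}(z)\bigr)=z^{-\nu}I_{\nu+1}(z)$. The power-series route has the advantage that it sidesteps precisely the domination issue you flag near $x=0$: since $\tilde p^{\delta}_{T}(x,y)=\sum_{k\ge 0}\frac{y^{2k+2\nu+1}x^{2k}(1/2T)^{2k+\nu}}{k!\,\Gamma(k+\nu+1)}$ is an entire function of $x$, no singular factor $x^{-\nu}$ ever appears and the differentiation is immediate for all $x\ge 0$, including $x=0$.

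Your Poisson-mixture alternative is in fact the probabilistic reading of that same power series (the $n$-th mixture weight $e^{-x^{2}/(2t)}(x^{2}/(2t))^{n}/n!$ is exactly the $n$-th term of the series after integrating in $y$), and is close in spirit to the paper's own alternative proof via the Laplace transform $Q^{\delta}_{T}e^{-\lambda\,\cdot}(x)=(1+2\lambda T)^{-\delta/2}\exp\bigl(-\lambda x/(1+2\lambda T)\bigr)$; both avoid density estimates entirely and work uniformly in $\delta\ge 0$.
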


\begin{rk} 
By Theorem \ref{thmana}, the derivative of the function $ x \mapsto P^{\delta}_{T} F (x)$ is a smooth function of $P^{\delta +2}_{T} F (x)$ and $P^{\delta}_{T} F (x)$. Hence, reasoning by induction, we deduce that the function $ x \mapsto P^{\delta}_{T} F (x)$ is actually smooth on $\mathbb{R}_{+}$.
\end{rk}

\begin{proof}
The proof we propose here relies on the explicit formula for the transition semi-group of the Bessel processes. We first treat the case $\delta > 0$.

Given $\delta > 0$, let $\nu := \frac{\delta}{2} - 1$, and denote by $I_{\nu}$ the modified Bessel function of index $\nu$. We have (see, e.g., Chap. XI.1 in \cite{revuz2013continuous}) :
\[ P^{\delta}_{t}F(x) = \int_{0}^{\infty} p^{\delta}_{T}(x,y) F(y) dy \]
where, for all $y \geq 0$:
\begin{align*}
 p^{\delta}_{t}(x,y) &= \frac{1}{T} \left( \frac{y}{x} \right)^{\nu} y  \exp \left(-\frac{x^{2}+y^{2}}{2T} \right) I_{\nu} \left( \frac{xy}{T} \right) , \quad \text{if} \quad x > 0, \\
p^{\delta}_{t}(0,y) &= \frac{2^{-\nu} T^{-(\nu+1)}}{ \Gamma(\nu +1)} y^{2 \nu +1} \exp \left(-\frac{y^{2}}{2T} \right)
\end{align*}
where $\Gamma$ denotes the gamma function. By the power series expansion of the function $I_{\nu}$ we have, for all $x,y \geq 0$:
\begin{equation} 
\label{sgpdensity}
p^{\delta}_{T}(x,y) = \frac{1}{T} \exp \left( -\frac{x^{2}+y^{2}}{2T} \right) \tilde{p}^{\delta}_{T}(x,y)
\end{equation}
with:
\[ \tilde{p}^{\delta}_{T}(x,y) := \sum_{k=0}^{\infty} \frac{y^{2k+2\nu+1} \ x^{2k} \ (1/2T)^{2k+\nu}}{k! \ \Gamma(k + \nu + 1)}. \] 
Note that $\tilde{p}^{\delta}_{T}(x,y)$ is the sum of a series with infinite radius of convergence in $x$, hence we can compute its derivative by differentiating under the sum. We have:
\begin{align*}
\label{xderiv}
\frac{\partial}{\partial x} \tilde{p}^{\delta}_{T}(x,y) &= \frac{\partial}{\partial x} \left(  \sum_{k=0}^{\infty} \frac{ x^{2k} \ y^{2k+2\nu+1} \ (1/2T)^{2k+\nu}}{k! \ \Gamma(k + \nu + 1)} \right) \\
&= \sum_{k=0}^{\infty} \frac{ 2k \ x^{2k-1} \ y^{2k+2\nu+1} \ (1/2T)^{2k+\nu}}{k! \ \Gamma(k + \nu + 1)} \\
&= \frac{x}{T} \ \sum_{k=1}^{\infty} \frac{ x^{2k-2} \ y^{2k+2\nu+1} \ (1/2T)^{2k+\nu -1}}{(k-1)! \ \Gamma(k + \nu + 1)}.
\end{align*}
Hence, performing the change of variable $j = k-1$, and remarking that $\nu + 1 = \frac{\delta +2}{2} -1$, we obtain: 
\begin{align*}
\frac{\partial}{\partial x} \tilde{p}^{\delta}_{T}(x,y) &= \frac{x}{T} \ \sum_{j=0}^{\infty} \frac{x^{2(j+1)-2} \ y^{2(j+1)+2\nu+1} \ (1/2T)^{2(j+1)+\nu -1}}{j! \ \Gamma((j+1) + \nu + 1)} \\
&= \frac{x}{T} \ \sum_{j=0}^{\infty} \frac{x^{2j} \ y^{2j+2(\nu+1)+1} \ (1/2T)^{2j+(\nu+1)}}{j! \ \Gamma(j + (\nu+1) + 1)} \\
&= \frac{x}{T}\ \tilde{p}^{\delta +2}_{T}(x,y).
\end{align*} 
As a consequence, differentiating equality (\ref{sgpdensity}) with respect to x, we obtain:
\begin{align*}
 \frac{\partial}{\partial x} p^{\delta}_{T}(x,y) &= \left( - \frac{x}{T} \ \tilde{p}^{\delta}_{T}(x,y) + \frac{\partial}{\partial x}\tilde{p}^{\delta}_{T}(x,y) \right) \frac{1}{T} \exp \left( -\frac{x^{2}+y^{2}}{2T} \right) \\
 &= \frac{x}{T} \ \left( - p^{\delta}_{T}(x,y) + p^{\delta+2}_{T}(x,y) \right).
\end{align*}
Hence, we deduce that the function $x \mapsto P^{\delta}_{T}F(x)$ is differentiable, with a derivative given by (\ref{bel2}). 

Now suppose that $\delta = 0$. We have, for all $x \geq 0$:
\begin{equation}
\label{sgp_zero}
 P^{0}_{T} F (x) = \exp \left(-\frac{x^{2}}{2T} \right) F(0) + \int_{0}^{\infty} p_{T}(x,y) F(y) dy 
\end{equation} 
where, for all $y \geq 0$:
\[ p_{T}(x,y) =  \frac{1}{T} \exp \left( -\frac{x^{2}+y^{2}}{2T} \right)  \tilde{p}_{T}(x,y) \]
with:
\[ \tilde{p}_{T}(x,y) := x \ I_{1} \left( \frac{xy}{T} \right) = \sum_{k=0}^{\infty} \frac{x^{2k+2} \ (y/2T)^{2k+1}}{k! (k + 1) !}. \]
Here again, we can differentiate the sum term by term, so that, for all $x, y \geq 0$:
\begin{align*}
\frac{\partial}{\partial x} \tilde{p}_{T}(x,y) & = \frac{x}{T} \ \sum_{k=0}^{\infty} \frac{x^{2k} y^{2k+1} (1/2T)^{2k}}{k!^{2}} \\
& = \frac{x}{T} \ \tilde{p}^{2}_{T}(x,y).
\end{align*}
Therefore, for all $x,y \geq 0$, we have:
\begin{align*} 
\frac{\partial}{\partial x} p_{T}(x,y) &=  \left( - \frac{x}{T} \tilde{p}_{T}(x,y) +  \frac{\partial}{\partial x} \tilde{p}_{T}(x,y) \right) \frac{1}{T} \exp \left( -\frac{x^{2}+y^{2}}{2T} \right)  \\
&=  \frac{x}{T} \left( - \tilde{p}_{T}(x,y) +  \tilde{p}^{2}_{T}(x,y) \right) \frac{1}{T} \exp \left( -\frac{x^{2}+y^{2}}{2T} \right)  \\
&=  \frac{x}{T} \left( - p_{T}(x,y) +  p^{2}_{T}(x,y) \right)
\end{align*}
Hence, differentiating (\ref{sgp_zero}) with respect to $x$, and using the dominated convergence theorem to differentiate inside the integral, we obtain:
\begin{align*}
 \frac{\partial}{\partial x} P^{0}_{T} F (x) &=  - \frac{x}{T} \exp(-\frac{x^{2}}{2T}) F(0)  + \frac{x}{T} \int_{0}^{\infty} \left( - p_{T}(x,y) + p^{2}_{T}(x,y) \right) F(y) dy \\
 &= \frac{x}{T} \ \left( - P^{0}_{T} F (x)  + P^{2}_{T} F (x)  \right),
\end{align*}
which yields the claim.
\end{proof}

\begin{rk}
Formula (\ref{bel2}) can also be derived using the Laplace transform of the one-dimensional marginals of the squared Bessel processes. Indeed, denote by $(Q^{\delta}_{t})_{t \geq 0}$ the family of transition kernels of the $\delta$-dimensional squared Bessel process. Then for all $\delta \geq 0$, $x \geq 0$, $T > 0$, and all function $f$ of the form $f(x) = \exp(-\lambda x)$ with $\lambda \geq 0$, one has:
\[ Q^{\delta}_{T} f(x) = \exp \left( - \frac{\lambda x}{1+2\lambda T} \right) (1+2 \lambda T)^{-\delta/2} 
\]
(see \cite{revuz2013continuous}, Chapter XI, Cor. (1.3)).
For such test functions $f$, we check at once that the following equality holds:
\[ \frac{d}{dx} Q^{\delta}_{T}f(x) = \frac{1}{2T} \left( Q^{\delta + 2}_{T}f(x)  - Q^{\delta}_{T}f(x) \right). \]
By linearity and by the Stone-Weierstrass theorem, we deduce that this equality holds for all bounded, continuous functions $f$ . Then an approximation argument enables to deduce the equality for all functions $f: \mathbb{R}_{+} \rightarrow \mathbb{R} $ Borel and bounded. Finally, remarking that for all bounded Borel function $F$ on $\mathbb{R}_{+}$ we have
\[ P^{\delta}_{T} F(x) = Q^{\delta}_{T} f (x^{2}) \]
with $f(x) := F(\sqrt{x})$, we deduce that:
\begin{align*}
\frac{d}{dx} P^{\delta}_{T}F(x) & =  2x \ \frac{d}{dx}(Q^{\delta}_{T} f) (x^{2}) \\
& = \frac{x}{T} \left(Q^{\delta + 2}_{T}f(x^{2})  - Q^{\delta}_{T}f(x^{2}) \right) \\
&= \frac{x}{T} \left(P^{\delta + 2}_{T}F(x)  - P^{\delta}_{T}F(x) \right)
\end{align*}
which yields the equality (\ref{bel2}).
\end{rk}

\begin{cor}
The semi-group $(P^{\delta}_{t})_{t \geq 0}$ has the strong Feller property. More precisely, for all $T > 0$, $R>0$, $x,y  \in [0,R]$ and  $F: \mathbb{R}_{+} \to \mathbb{R}$ bounded and Borel, we have:
\begin{equation}
\label{feller}
 | P^{\delta}_{T} F (x) - P^{\delta}_{T} F (y)| \leq \frac{ 2 R ||F||_{\infty}}{T} |y-x|. 
\end{equation}
\end{cor}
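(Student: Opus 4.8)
The plan is to deduce the strong Feller bound \eqref{feller} directly from the derivative formula \eqref{bel2} of Theorem \ref{thmana}, exactly mirroring the argument used for Corollary \ref{dissip_strong_feller} in the classical case. The key observation is that the transition kernels $P^{\delta+2}_T$ and $P^{\delta}_T$ are Markovian (probability) kernels, hence contractions on $L^\infty$: for any bounded Borel $F$ we have $\|P^{\delta+2}_T F\|_\infty \le \|F\|_\infty$ and $\|P^{\delta}_T F\|_\infty \le \|F\|_\infty$. Therefore the bracketed quantity in \eqref{bel2} is bounded in absolute value by $2\|F\|_\infty$, uniformly in $x$, and we get the pointwise derivative bound
\[
\left| \frac{d}{dx} P^{\delta}_T F(x) \right| = \frac{x}{T}\,\bigl| P^{\delta+2}_T F(x) - P^{\delta}_T F(x)\bigr| \le \frac{2x\,\|F\|_\infty}{T}.
\]

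Next I would integrate this bound over the segment $[x,y]$ (say $x \le y$, both in $[0,R]$). Since the function $x \mapsto P^{\delta}_T F(x)$ is differentiable on $\mathbb{R}_+$ by Theorem \ref{thmana} (and in fact smooth by the following Remark), the fundamental theorem of calculus gives
\[
| P^{\delta}_T F(y) - P^{\delta}_T F(x)| = \left| \int_x^y \frac{d}{dx} P^{\delta}_T F(z)\, dz \right| \le \int_x^y \frac{2z\,\|F\|_\infty}{T}\, dz \le \frac{2R\,\|F\|_\infty}{T}\,|y-x|,
\]
using $z \le R$ on the domain of integration. This is precisely \eqref{feller}. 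Strong Feller then follows since a Lipschitz function on each compact $[0,R]$ is continuous, and $\mathbb{R}_+$ is covered by such compacts.

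One subtlety to address is the passage from $F \in C_b(\mathbb{R}_+)$ (where one might first want to justify the interchange of derivative and integral cleanly) to general bounded Borel $F$; but here this is not actually needed, since Theorem \ref{thmana} already asserts differentiability and formula \eqref{bel2} for \emph{all} bounded Borel $F$, so no approximation step is required. A second minor point: one should note that $P^{\delta+2}_T F$ makes sense for a function $F$ defined on $\mathbb{R}_+$, which it does since both Bessel processes live on $\mathbb{R}_+$. I do not anticipate a serious obstacle here: the only mildly delicate ingredient, namely the differentiability of the semigroup together with the clean expression for the derivative, is exactly what Theorem \ref{thmana} provides, and the remaining steps are the contraction property of Markov kernels plus an elementary integration. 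The real content — and the reason \eqref{feller} holds with a modulus independent of $\delta$ — is that the two kernels appearing in \eqref{bel2} are probability kernels regardless of the dimension, so the ``dissipativity constant'' which blows up in the SDE picture never enters the estimate.
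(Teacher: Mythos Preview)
Your proposal is correct and follows essentially the same approach as the paper: apply the fundamental theorem of calculus using the derivative formula \eqref{bel2}, bound $|P^{\delta+2}_T F - P^{\delta}_T F| \le 2\|F\|_\infty$, and then use $z \le R$ on the interval of integration. The paper's proof is just the three-line computation you describe, without the additional commentary.
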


\begin{proof}
By Theorem \ref{thmana}, for all $x,y \in [0,R]$ such that $x \leq y$, we have:
\begin{align*}
|P^{\delta}_{t} F (x) - P^{\delta}_{t} F (y)| &= \left| \int_{x}^{y}  \frac{u}{T} \left(  P^{\delta+2}_{T} F(u) -  P^{\delta}_{T} F(u)  \right) du \right| \\
&\leq \frac{2 ||F||_{\infty}}{T}  \int_{x}^{y} u \ du \\
&\leq \frac{ 2 R ||F||_{\infty}}{T} |y-x| .
\end{align*}
\end{proof}

\begin{rk}
The bound (\ref{feller}) is in $1/T$, which is not very satisfactory for $T$ small. However, in the sequel, we will improve this bound by getting a better exponent on $T$, at least for $\delta \geq 2(\sqrt{2} -1)$ (see inequality (\ref{betterbound}) below).
\end{rk} 


\section{Differentiability of the flow}
\label{diffflow}

In the following, we are interested in finding a probabilistic interpretation of Thm \ref{thmana}, in terms of the Bismut-Elworthy-Li formula. To do so we study, for all $\delta \geq 0$, and all couple $(t,x) \in \mathbb{R}_{+} \times \mathbb{R}^{*}_{+}$, the differentiability at $x$ of the function:
\begin{align*} 
\rho_ {t} \colon & \mathbb{R}_{+} \to \mathbb{R}_{+} \\
                               & y \mapsto \rho^{\delta}_{t}(y).
\end{align*}
In this endeavour, we first need to choose an appropriate modification of the process $(\rho_{t}(x))_{t \geq 0, x > 0}$. We have the following result:

\begin{prop}
\label{modif}
Let $\delta \geq 0$ be fixed. There exists a modification $(\tilde{\rho}^{\delta}_{t}(x))_{x , t \geq 0}$ of the process $(\rho^{\delta}_{t}(x))_{x, t \geq 0}$ such that, a.s., for all $x, x' \in \mathbb{R}_{+}$ with $x \leq x' $ ,  we have:
\begin{equation}
\label{comp}
\forall t \geq 0, \qquad {\tilde{\rho}}^{\delta}_{t}(x) \leq {\tilde{\rho}}^{\delta}_{t}(x') .
\end{equation}  
\end{prop}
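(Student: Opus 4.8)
The plan is to construct $\tilde\rho^\delta$ by regularizing the Bessel flow in its starting point along a countable dense set, using the comparison Lemma \ref{weakcomp} to control the requisite null sets. Concretely, set $D:=\mathbb Q\cap\mathbb R_{+}$. For each pair $q\le q'$ in $D$, Lemma \ref{weakcomp} (applied with $\delta'=\delta$) furnishes a null set outside of which $\rho^{\delta}_{t}(q)\le\rho^{\delta}_{t}(q')$ for all $t\ge 0$; discarding the (countable) union of these null sets, one obtains a full-measure event $\Omega_{0}$ on which $q\mapsto\rho^{\delta}_{t}(q)$ is nondecreasing on $D$, simultaneously for all $t\ge 0$. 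On $\Omega_{0}$ I would then define
\[
\tilde\rho^{\delta}_{t}(x):=\inf\{\rho^{\delta}_{t}(q):q\in D,\ q>x\}=\lim_{D\ni q\downarrow x}\rho^{\delta}_{t}(q),\qquad x,t\ge 0,
\]
and set $\tilde\rho^{\delta}\equiv 0$ off $\Omega_{0}$. With this definition the monotonicity \eqref{comp} is immediate and in fact holds for \emph{every} $\omega\in\Omega_{0}$: if $x\le x'$ then $\{q\in D:q>x'\}\subseteq\{q\in D:q>x\}$, and taking infima reverses the inclusion, giving $\tilde\rho^{\delta}_{t}(x)\le\tilde\rho^{\delta}_{t}(x')$ for all $t\ge 0$.

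It then remains to check that $\tilde\rho^{\delta}$ is a modification of $\rho^{\delta}$, i.e. that for each fixed $(x,t)$ one has $\tilde\rho^{\delta}_{t}(x)=\rho^{\delta}_{t}(x)$ almost surely. One of the two inequalities is free: applying Lemma \ref{weakcomp} to the countably many pairs $x\le q$ with $q\in D\cap(x,\infty)$ gives $\rho^{\delta}_{t}(x)\le\rho^{\delta}_{t}(q)$ a.s. for all such $q$, hence $\rho^{\delta}_{t}(x)\le\tilde\rho^{\delta}_{t}(x)$ a.s.

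The reverse inequality is where the only real work lies, and I expect it to be the main (if mild) obstacle: nothing a priori guarantees that the rationally regularized process agrees with the flow at an irrational starting point, so one genuinely needs a continuity-in-probability input for the map $a\mapsto X^{\delta}_{t}(a)$ at the fixed time $t$. Here I would fix a sequence $q_{n}\downarrow x$ in $D$ and use the monotone coupling $0\le X^{\delta}_{t}(q_{n}^{2})-X^{\delta}_{t}(x^{2})$ from Lemma \ref{weakcomp} together with the identity $\mathbb E[X^{\delta}_{t}(a)]=a+\delta t$ — which follows from \eqref{sde} once one knows the local martingale $\int_{0}^{\cdot}\sqrt{X_{s}}\,dB_{s}$ is a true martingale on $[0,t]$, obtained by localization and Fatou's lemma — to conclude that $\mathbb E[X^{\delta}_{t}(q_{n}^{2})-X^{\delta}_{t}(x^{2})]=q_{n}^{2}-x^{2}\to 0$. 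Hence $X^{\delta}_{t}(q_{n}^{2})\to X^{\delta}_{t}(x^{2})$ in $L^{1}$, so $\rho^{\delta}_{t}(q_{n})\to\rho^{\delta}_{t}(x)$ in probability; along a subsequence this convergence is almost sure, and since $\tilde\rho^{\delta}_{t}(x)\le\rho^{\delta}_{t}(q_{n})$ on $\Omega_{0}$, passing to the limit yields $\tilde\rho^{\delta}_{t}(x)\le\rho^{\delta}_{t}(x)$ a.s. Combining the two inequalities proves that $\tilde\rho^{\delta}$ is a modification, and with the first paragraph this finishes the proof.

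Finally, although the statement only asks for a modification satisfying \eqref{comp}, I would record in passing the stronger fact that will be convenient later: setting $D_{t}:=X^{\delta}_{t}(q_{n}^{2})-X^{\delta}_{t}(x^{2})\ge 0$ and subtracting the two copies of \eqref{sde}, $D$ solves a linear equation with $\langle D\rangle_{t}=4\int_{0}^{t}(\rho^{\delta}_{s}(q_{n})-\rho^{\delta}_{s}(x))^{2}\,ds\le 4\int_{0}^{t}D_{s}\,ds$ (using $(\sqrt a-\sqrt b)^{2}\le a-b$ for $0\le b\le a$) and $\mathbb E[D_{s}]=q_{n}^{2}-x^{2}$; a Burkholder--Davis--Gundy estimate then gives $\mathbb E[\sup_{s\le T}D_{s}^{2}]\le C_{T}(q_{n}^{2}-x^{2})\to 0$, so that along a subsequence $\rho^{\delta}_{\cdot}(q_{n})\to\rho^{\delta}_{\cdot}(x)$ locally uniformly a.s. Consequently each path $t\mapsto\tilde\rho^{\delta}_{t}(x)$ is continuous and coincides identically with $t\mapsto\rho^{\delta}_{t}(x)$, which is the form of the modification used in the next sections.
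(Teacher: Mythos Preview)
Your construction is exactly the paper's: define $\tilde\rho^{\delta}_{t}(x)$ as the infimum of $\rho^{\delta}_{t}(q)$ over rationals $q$ above $x$, with the pathwise monotonicity \eqref{comp} inherited from Lemma~\ref{weakcomp} applied to countably many pairs. The paper's own proof is in fact terser than yours --- it simply asserts that the resulting process is a modification --- so your $L^{1}$ continuity argument via $\mathbb{E}[X^{\delta}_{t}(a)]=a+\delta t$ to verify $\tilde\rho^{\delta}_{t}(x)=\rho^{\delta}_{t}(x)$ a.s.\ at irrational $x$, and the final BDG paragraph, are correct elaborations that go beyond what the paper spells out.
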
 

\begin{proof}
For all $q,q' \in \mathbb{Q}_{+}$, such that $q \leq q'$, by Lemma \ref{weakcomp}, the following property holds a.s.:
\[ \forall t \geq 0, \qquad \rho^{\delta}_{t}(q) \leq \rho^{\delta}_{t}(q'). \]
For all $x \in \mathbb{R}_{+} $, we define the process $\tilde{\rho}^{\delta}(x)$ by:
\[ \forall t \geq 0, \qquad \tilde{\rho}^{\delta}_{t}(x) := \underset{q \in \mathbb{Q}_{+}, q \geq x}{\inf} \rho^{\delta}_{t}(q). \]
Then $(\tilde{\rho}^{\delta}_{t}(x))_{x, t \geq 0}$ yields a modification of the process $(\rho^{\delta}_{t}(x))_{x, t \geq 0}$ with the requested property.
\end{proof}  

\begin{rk}
\label{ascont}
We may not have, almost-surely, joint continuity of all the functions $t \mapsto \tilde{\rho}_{t}(x)$, $x \geq 0$. Note however that, by definition, for all $x\geq0, x \in \mathbb{Q}$, we have a.s.:
\[ \forall t \geq 0, \qquad \tilde{\rho}_{t}(x) = \rho_{t}(x), \]
so that, a.s., $t \mapsto {\tilde{\rho}}_{t}(x)$ is continuous and satisfies: 
\[ \forall t \in [0, T_{0}(x)), \qquad \tilde{\rho}_{t}(x) = x + \frac{\delta - 1}{2} \int_{0}^{t} \frac{ds}{\tilde{\rho}_{s}} + B_{t} . \]
As a consequence, by countability of $\mathbb{Q}$, there exists an almost sure event $\mathcal{A} \in \mathcal{F}$ on which, for all $x \in \mathbb{Q}_{+}$, the function $ t \mapsto {\tilde{\rho}}_{t}(x)$ is continuous and satisfies:
\[ \forall t \in [0, T_{0}(x)), \qquad \tilde{\rho}_{t}(x) = x + \frac{\delta - 1}{2} \int_{0}^{t} \frac{ds}{\tilde{\rho}_{s}} + B_{t}. \]
\end{rk}                    

In this section, as well as the Appendix, we always work with the modification $\tilde{\rho}$.  Similarly, we work with :
\[ \tilde{T}_{0}(x) := \inf \{ t > 0, \tilde{\rho}^{\delta}_{t}(x) = 0 \} \]
instead of $T_{0}(x)$, for all $\delta, x \geq 0$. We will write again $\rho$ and $T_{0}$ instead of $ \tilde{\rho}$ and $\tilde{T_{0}}$. Note that, a.s., the function $x \mapsto T_{0}(x)$ is non-decreasing on $\mathbb{R}_{+}$. 

\begin{prop}
\label{derflow}
Let $\delta \geq 0$, $t > 0$ and $x >0 $. Then, a.s., the function $\rho_{t}$ is differentiable at $x$, and its derivative there is given by:
\begin{equation}\label{indic} 
\frac{d\rho_{t}(y)}{dy}\Big\rvert_{y=x} \overset{a.s.}{=} \eta_{t}(x) := \mathbf{1}_{t < T_{0}(x)} \exp \left( \frac{1-\delta}{2} \int_{0}^{t}\frac{ds}{\rho_{s}(x)^{2}} \right). 
\end{equation}
\end{prop}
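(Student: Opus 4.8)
The plan is to split the sample space according to whether $t<T_0(x)$ or $t>T_0(x)$: since the law of $T_0(x)$ is atomless (and $T_0(x)=+\infty$ a.s. when $\delta\geq 2$, in which case only the first case occurs), the event $\{t=T_0(x)\}$ is negligible and can be discarded. On $\{t<T_0(x)\}$ the indicator in \eqref{indic} equals $1$, so one must show that $\rho_t$ is differentiable at $x$ with derivative $\exp\bigl(\tfrac{1-\delta}{2}\int_0^t \rho_s(x)^{-2}\,ds\bigr)$; on $\{t>T_0(x)\}$ one must show the derivative vanishes.

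\textbf{Differentiability away from $0$.} First I would regularise the drift: for $\epsilon>0$ let $b_\epsilon:\mathbb{R}\to\mathbb{R}$ be smooth, globally Lipschitz, with $b_\epsilon'$ bounded above, and coinciding with $r\mapsto\frac{\delta-1}{2r}$ on $[\epsilon,+\infty)$. Let $(\rho^\epsilon_t(y))_{t\geq0,\,y\in\mathbb{R}}$ be the associated flow, $d\rho^\epsilon_t(y)=b_\epsilon(\rho^\epsilon_t(y))\,dt+dB_t$, $\rho^\epsilon_0(y)=y$; by the classical theory recalled in Section~2 this flow is bicontinuous and, as in \eqref{dissip_derivative}, $y\mapsto\rho^\epsilon_t(y)$ is a.s.\ differentiable with $\frac{d}{dy}\rho^\epsilon_t(y)=\exp\bigl(\int_0^t b_\epsilon'(\rho^\epsilon_s(y))\,ds\bigr)$. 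Writing $\tau_a(x):=\inf\{s:\rho_s(x)\leq a\}$, on the event $\{t<\tau_{2\epsilon}(x)\}$ the path $\rho_\cdot(x)$ stays in $[2\epsilon,+\infty)$ on $[0,t]$, hence solves the same equation as $\rho^\epsilon_\cdot(x)$ there, so the two agree on $[0,t]$ and $\inf_{[0,t]}\rho^\epsilon_\cdot(x)>\epsilon$; by continuity of $y\mapsto\rho^\epsilon_\cdot(y)$ for uniform convergence on $[0,t]$, the same strict inequality holds on a (random) neighbourhood of $x$, and there $\rho^\epsilon_\cdot(x')$ in turn solves the true equation on $[0,t]$, so by pathwise uniqueness $\rho_\cdot(x')=\rho^\epsilon_\cdot(x')$ on $[0,t]$ and $t<T_0(x')$. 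Hence on $\{t<\tau_{2\epsilon}(x)\}$ the map $\rho_t$ is differentiable at $x$ with derivative $\exp\bigl(\int_0^t b_\epsilon'(\rho^\epsilon_s(x))\,ds\bigr)=\exp\bigl(\tfrac{1-\delta}{2}\int_0^t\rho_s(x)^{-2}\,ds\bigr)=\eta_t(x)$, using $b_\epsilon'=b'$ on the range of the path. Since $\rho$ is continuous, $\tau_{2\epsilon}(x)\uparrow T_0(x)$ as $\epsilon\downarrow0$, so $\{t<T_0(x)\}=\bigcup_{\epsilon>0}\{t<\tau_{2\epsilon}(x)\}$ and the claim follows on this event.

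\textbf{Vanishing of the derivative after $T_0(x)$.} The pathwise ingredient here is an absorption property: if $x\leq x'$ and $T_0(x')\leq t$, then by Lemma~\ref{weakcomp} (in the form \eqref{comp}) one has $0\leq\rho_{T_0(x')}(x)\leq\rho_{T_0(x')}(x')=0$, so both squared processes vanish at time $T_0(x')$; applying pathwise uniqueness for \eqref{sde} to the processes restarted at $0$ at that time (with driving noise $B_{T_0(x')+\cdot}-B_{T_0(x')}$) yields $\rho_s(x)=\rho_s(x')$ for all $s\geq T_0(x')$, hence $\rho_t(x)=\rho_t(x')$ whenever $T_0(x')\leq t$. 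On $\{t>T_0(x)\}$ this immediately gives $\rho_t(x'')=\rho_t(x)$ for every $x''<x$ (at time $T_0(x)<t$ both $\rho_\cdot(x'')$ and $\rho_\cdot(x)$ are at $0$), so the left difference quotients of $\rho_t$ at $x$ vanish; for the right difference quotients one needs $T_0(x')\leq t$ for $x'>x$ close to $x$, that is, since $x\mapsto T_0(x)$ is nondecreasing, the right-continuity of $x\mapsto T_0(x)$ at the prescribed point $x$. Granting $\lim_{x'\downarrow x}T_0(x')=T_0(x)$ a.s., the right difference quotients also vanish, and $\frac{d\rho_t}{dy}\big|_{y=x}=0=\eta_t(x)$ on $\{t>T_0(x)\}$. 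Combining the two cases and discarding the null event $\{t=T_0(x)\}$ completes the proof.

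\textbf{Main obstacle.} The delicate step is the last one: showing that the monotone (hence a.e.-continuous) map $x\mapsto T_0(x)$ has, with probability one, no jump at the \emph{given} point $x$. This is precisely where the monotone modification $\tilde\rho$ of Proposition~\ref{modif} matters: one writes $\rho_t(x')^2-\rho_t(x)^2=X^\delta_t(x')-X^\delta_t(x)$, which by \eqref{sde} is the nonnegative local martingale equal to $(x'^2-x^2)\exp\bigl(\int_0^t\frac{2\,dB_s}{\rho_s(x)+\rho_s(x')}-\int_0^t\frac{2\,ds}{(\rho_s(x)+\rho_s(x'))^2}\bigr)$, and studies it as $x'\downarrow x$: on $\{t>T_0(x)\}$ the quadratic variation $\int_0^t(\rho_s(x)+\rho_s(x'))^{-2}\,ds$ increases to $\int_0^t\rho_s(x)^{-2}\,ds=+\infty$ — the divergence of $\int_0^{T_0(x)}\rho_s(x)^{-2}\,ds$ following from It\^o's formula applied to $\log\rho_\cdot(x)$ via \eqref{sdebessel} — which should force the exponential (hence the difference quotient) to tend to $0$; making this precise requires care because the martingale part of the exponent may also blow up. Everything else uses only Lemma~\ref{weakcomp}, Remark~\ref{ascont}, pathwise uniqueness for \eqref{sde}, and the flow-differentiation result of Section~2.
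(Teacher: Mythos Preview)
Your overall strategy matches the paper's: split on $\{t<T_0(x)\}$ versus $\{t>T_0(x)\}$, discard the null event $\{t=T_0(x)\}$, use the SDE/flow structure on the first event and a coalescence argument on the second. For the first case you regularise the drift and invoke the classical flow-differentiation theory; the paper instead works directly with the difference quotient $\eta^h_s:=(\rho_s(x+h)-\rho_s(x))/h$, obtains $\eta^h_t=\exp\bigl(\tfrac{1-\delta}{2}\int_0^t\frac{ds}{\rho_s(x)\rho_s(x+h)}\bigr)$ from the SDE, and passes to the limit by dominated convergence after localising on an event $\{t<T_\epsilon(\gamma)\}$ with rational $\gamma<x$. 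Both routes are valid; yours trades a bicontinuity lemma for the imported regularised-flow machinery.

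The genuine gap is exactly the one you flag. Your proposed attack on the right-continuity of $x\mapsto T_0(x)$ via the exponential representation of $X_t(x')-X_t(x)$ is incomplete: the quadratic-variation term in the exponent diverges to $+\infty$ as $x'\downarrow x$, but so may the martingale term, and you give no argument controlling their competition (a time-change plus iterated-logarithm estimate in the spirit of Proposition~\ref{cv} would be needed, and it is not clear it goes through uniformly in $x'$). The paper bypasses this entirely with a one-line distributional argument: by Bessel scaling, $T_0(y)\stackrel{d}{=}y^2\,T_0(1)$, so $y\mapsto\mathbb{E}\bigl[e^{-T_0(y)}\bigr]=\mathbb{E}\bigl[e^{-y^2 T_0(1)}\bigr]$ is continuous; since $y\mapsto T_0(y)$ is monotone, its one-sided pathwise limits $T_0(x^\pm)$ exist, and comparing Laplace transforms forces $T_0(x^-)=T_0(x)=T_0(x^+)$ a.s. With this in hand, on $\{t>T_0(x)\}$ one picks a rational $y>x$ with $t>T_0(y)$, and then your coalescence step (the paper's Lemma~\ref{traj}) gives $\rho_t$ constant on $[0,y]$. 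This closes the gap without any analysis of the exponential martingale.
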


The proof of this proposition is quite technical. Since, moreover, the result will not be necessary in the sequel, we prefer to postpone the proof to the Appendix of the article.

\begin{rk}
In particular, when $\delta =1$, the above formula reduces to:
\begin{equation}
\label{discontder}
 \frac{d\rho_{t}(y)}{dy}\Big\rvert_{y=x} \overset{a.s.}{=} \mathbf{1}_{t < T_{0}(x)}
 \end{equation}
a formula which was already well-known (see e.g. \cite{arnaudon2017reflected}, Lemma A.1). 
\end{rk}

\begin{rk}
Note that the indicator function $\mathbf{1}_{t < T_{0}(x)}$ in the right-hand side of 
(\ref{indic}) is related to the behavior of the Bessel process at the boundary ${0}$. It is reminiscent of Theorem 1 in \cite{deuschel2005bismut} , where a similar indicator function appears in the expression of the spatial derivative of the flow of vector-valued solutions to SDEs with reflection. 
\end{rk} 

\begin{rk}\label{rk5}
Proposition \ref{derflow} shows that, for all $t , x > 0$, the function $\rho_{t}$ is almost-surely differentiable at $x$. We may, however, ask if, a.s., the function $\rho_{t}$ is differentiable on the whole of $\mathbb{R}^{*}_{+}$. The case where $\delta > 1$ was treated in detail in \cite{vostrikova2009regularity}, where it was shown that, a.s., for all $t \geq 0$ the function  $x \mapsto \rho_{t}(x)$ is differentiable on $\mathbb{R}^{*}_{+}$, and that the derivative $\frac{d\rho_{t}(x)}{dx}$ is continuous in $ (t,x) \in \mathbb{R}_{+} \times \mathbb{R}^{*}_{+}$. However, as $\delta$ gets smaller than $1$, the regularity of the process $(\rho_{t}(x))_{t \geq 0, x > 0}$ becomes much worse. Note that $\delta =1$ corresponds to the case of the flow of reflected Brownian motion on the half-line; in that case the flow is no longer continuously differentiable as suggested by (\ref{discontder}). Many works have been carried out on the study of the flow of reflected Brownian motion on domains in higher dimension (see e.g. \cite{burdzy2009differentiability} and \cite{varadhan1985brownian}) or on manifolds with boundary (see e.g. \cite{arnaudon2017reflected}). By contrast, the regularity of Bessel flows of dimension $\delta < 1$ seems to be a very open problem. 

In the remainder of the article, however, we shall not need any regularity results on the Bessel flow. Instead, for all fixed $x > 0$, we shall study the process $(\eta_{t}(x))_{t \geq 0}$ defined above as a process in itself.
\end{rk}


\section{Properties of $\eta$}

In the sequel, for all $x \geq 0$, we shall consider the process $(\eta_{t}(x))_{t \geq 0}$ defined as  above:
\begin{equation}
\label{defeta}
 \eta_{t}(x) := \mathbf{1}_{t < T_{0}(x)} \exp \left( \frac{1-\delta}{2} \int_{0}^{t}\frac{ds}{\rho_{s}(x)^{2}} \right).
 \end{equation}
When there is no ambiguity we shall drop the $x$ from our notation and denote this process by $\eta$. 


\subsection{Regularity of the sample paths of $\eta$}

We are interested in the continuity property of the process $\eta$. It turns out that, as $\delta$ decreases, $\eta$ becomes more and more singular, as shown by the following result. 

\begin{prop} 
\label{continuityeta}
If $\delta > 1$, then a.s. $\eta$ is bounded and continuous on $\mathbb{R}_{+}$. \\
If $\delta =1$, then a.s. $\eta$ is constant on $[0,T_{0})$ and $[T_{0},+\infty)$, but has a discontinuity at $T_{0}$. \\
If $\delta \in [0,1)$, then a.s. $\eta$ is continuous away from $T_{0}$, but it diverges to $+ \infty$ as $t \uparrow T_{0}$.
\end{prop}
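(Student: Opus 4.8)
The plan is to express $\eta$ through the additive functional $A_t := \int_0^t \rho_s^{-2}\,ds$ and to track its behaviour as $t \uparrow T_0$. Fix $x > 0$ (for $x=0$ the statement is trivial, since $\eta \equiv 0$), and work with the modification $\tilde\rho$, still denoted $\rho$, which satisfies \eqref{sdebessel} on $[0,T_0)$. For every $t < T_0$ one has $\inf_{s\le t}\rho_s > 0$ by continuity and positivity of $\rho$ on $[0,T_0)$, hence $A_t \le t\,(\inf_{s\le t}\rho_s)^{-2} < \infty$; moreover $t\mapsto A_t$ is continuous and strictly increasing on $[0,T_0)$. Thus on $[0,T_0)$ we may write $\eta_t = \exp\big(\tfrac{1-\delta}{2}A_t\big)$, with $\eta_t = 0$ for $t \ge T_0$. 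The case $\delta = 1$ is then immediate: $\eta_t = \mathbf{1}_{t < T_0}$, and since $0 < T_0 < +\infty$ a.s. (because $\delta = 1 < 2$), $\eta$ equals $1$ on $[0,T_0)$, equals $0$ on $[T_0,+\infty)$, and jumps at $T_0$.

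For $\delta \ne 1$ everything reduces to the claim that $\int_0^{T_0}\rho_s^{-2}\,ds = +\infty$ a.s. whenever $\delta < 2$. To prove it, apply It\^{o}'s formula to $\log\rho_t$ on $[0,T_0)$ using \eqref{sdebessel}:
\[ \log\rho_t = \log x + N_t + \tfrac{\delta-2}{2}A_t, \qquad N_t := \int_0^t \frac{dB_s}{\rho_s}, \quad \langle N\rangle_t = A_t. \]
Suppose for contradiction that $\int_0^{T_0}\rho_s^{-2}\,ds < \infty$ on an event of positive probability. There $\langle N\rangle_{T_0^-} < \infty$, so by the Dambis--Dubins--Schwarz representation $N_t$ converges a.s. to a finite limit as $t\uparrow T_0$; since $A_t$ converges as well and $\delta \ne 2$, the displayed identity forces $\log\rho_t$ to converge to a finite limit. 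But $\delta < 2$ gives $T_0 < +\infty$ a.s., so $\rho_t\to\rho_{T_0}=0$ and $\log\rho_t\to-\infty$, a contradiction. (Equivalently: time-changing by the inverse of $A$ turns $\log\rho$ into a Brownian motion with drift $\tfrac{\delta-2}{2}$ started at $\log x$, which cannot reach $-\infty$ in finite time, so the clock $A$ must run to $+\infty$ before $T_0$.) If $\delta \ge 2$, instead, $T_0 = +\infty$ a.s. and the local bound above shows $A_t < \infty$ for every finite $t$, with $t\mapsto A_t$ continuous on $\mathbb{R}_+$.

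The three regimes follow at once. If $\delta > 1$ then $\tfrac{1-\delta}{2} < 0$, so on $[0,T_0)$ the map $t\mapsto\eta_t = \exp(\tfrac{1-\delta}{2}A_t)$ is continuous with values in $(0,1]$; for $\delta \ge 2$ this is all of $\mathbb{R}_+$, while for $1 < \delta < 2$ the claim gives $A_t\to+\infty$, hence $\eta_t\to 0 = \eta_{T_0}$ as $t\uparrow T_0$, so $\eta$ is continuous on $\mathbb{R}_+$ and bounded by $1$. If $\delta \in [0,1)$ then $\tfrac{1-\delta}{2} > 0$: $\eta$ is continuous on $[0,T_0)$ and (being identically $0$) on $(T_0,+\infty)$, but now $A_t\to+\infty$ as $t\uparrow T_0$ forces $\eta_t\to+\infty$, so $\eta$ is continuous away from $T_0$ and diverges to $+\infty$ there.

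I expect the one genuine point to be the identity $\int_0^{T_0}\rho_s^{-2}\,ds = +\infty$ a.s. for $\delta < 2$; the rest is soft (finiteness and continuity of $A$ on $[0,T_0)$, monotonicity of $\exp$). For that identity the care needed is in combining the a.s. convergence of the martingale part $N$ on $\{\langle N\rangle_{T_0^-} < \infty\}$ with the already recalled fact that $T_0 < +\infty$ a.s. in this regime.
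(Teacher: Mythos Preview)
Your proof is correct, and the overall structure (reduce everything to the behaviour of $A_t = \int_0^t \rho_s^{-2}\,ds$ as $t \uparrow T_0$, then read off the three regimes from the sign of $\tfrac{1-\delta}{2}$) matches the paper's exactly. The one substantive difference lies in how you establish the key lemma $A_{T_0^-} = +\infty$ a.s.\ for $\delta < 2$. The paper proceeds in two steps: first it shows, via L\'evy's modulus of continuity, that $\int_0^{T_0^\beta} \beta_s^{-2}\,ds = +\infty$ for a Brownian motion $\beta$ started from a positive point; then it invokes an external time-change representation of the Bessel process as a power of Brownian motion (Theorem~3.5 in \cite{zambotti2017random}) to transfer the result. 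Your argument---It\^o on $\log\rho$, then observe that if $\langle N\rangle_{T_0^-} < \infty$ the martingale part converges while $\log\rho_t \to -\infty$---is considerably more direct: it is self-contained, avoids both the modulus-of-continuity estimate and the representation theorem, and works uniformly in $\delta \in [0,2)$. (Amusingly, the paper uses exactly this It\^o-on-$\log\rho$ computation in the \emph{next} proposition, Proposition~\ref{cv}, just not here.) What the paper's route buys is an explicit reduction to the Brownian case; what yours buys is economy.

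One small remark: the parenthetical ``for $x=0$ the statement is trivial, since $\eta \equiv 0$'' is not quite right when $\delta \ge 2$ (there $T_0 = +\infty$ and $\eta_0 = 1$), but the paper itself effectively restricts to $x > 0$ throughout this section (cf.\ the hypothesis of Lemma~\ref{integral}), so this is immaterial to the argument.
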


\begin{proof}
When $\delta \geq 2$, $T_{0} = \infty$ almost-surely, so that, by (\ref{defeta}), the following equality of processes holds:
\[ \eta_{t} = \exp \left( \frac{1-\delta}{2} \int_{0}^{t}\frac{ds}{\rho_{s}(x)^{2}} \right). \]
Hence, a.s., $\eta$ takes values in $[0,1]$ and is continuous on $\mathbb{R}_{+}$. 
To  treat the case $\delta < 2$ we need a lemma:

\begin{lm}
\label{integral}
Let $\delta < 2$ and $ x > 0$. Then the integral:
\[ \int_{0}^{T_{0}} \frac{ds}{(\rho_{s}(x))^{2}} \]
is infinite a.s.
\end{lm}

We admit this result for the moment. Then, when $\delta \in (1, 2)$, $\eta$ takes values in $[0,1]$, is continuous away from $T_{0}$ and, almost-surely, as $t  \uparrow T_{0}$:
\[ \eta_{t} = \exp \left( \frac{1-\delta}{2} \int_{0}^{t}\frac{ds}{\rho_{s}(x)^{2}} \right) \longrightarrow 0.\]
Since, $\eta_{t} = 0$ for all $t \geq T_{0}$, $\eta$ is continuous and the claim follows.
When $\delta =1$,
\[  \eta_{t}(x) := \mathbf{1}_{t < T_{0}(x)} \]
so the claim follows at once.  
Finally, if $\delta \in [0,1)$, then $\eta$ is continuous away from $T_{0}$, but by the above lemma, a.s., as $t \uparrow T_{0}$:
\[ \eta_{t} = \exp \left( \frac{1-\delta}{2} \int_{0}^{t}\frac{ds}{\rho_{s}(x)^{2}} \right) \longrightarrow + \infty\]
so the claim follows.
\end{proof}

We now prove Lemma \ref{integral}

\begin{proof}[Proof of Lemma \ref{integral}]
The proof is in two steps. In a first step we prove the lemma when $\rho$ is replaced with a Brownian motion started at some positive point, and in a second step we invoke a representation theorem of Bessel processes as time-changes of some power of the Brownian motion to conclude. 

\textit{First step}:
Let $(\beta_{t})$ be a Brownian motion started from some $y > 0$, and let $T_{0}$ denote its hitting time of the origin. Then the integral :
\[   \int_{0}^{T_{0}} \frac{ds}{(\beta_{s}(y))^{2}} \]
is a.s. infinite. Indeed, denote by $h: [0,\infty) \rightarrow \mathbb{R}_{+}$ the function given by:
\[ h(t) := \begin{cases} \sqrt{t |\log(1/t)|}, & \  \text{if} \ t > 0, \\
                                        0 ,                          &   \  \text{if} \ t = 0. 
              \end{cases} \]                          
Let $A > 0$.  By Levy's modulus of continuity (see Theorem (2.7), Chapter I, in \cite{revuz2013continuous}), there exists a $\kappa > 0$, such that the event
\[ \mathcal{M} := \{ \ \forall s,t \in [0, 1], \quad |\beta_{t} - \beta_{s}| \leq \kappa \ h(|s-t|) \ \} \]
has probability one. Therefore, by scale invariance of Brownian motion, setting $\kappa_{A} := \sqrt{A} \kappa$, one deduces that the event
\[ \mathcal{M}_{A} := \{ \ \forall s,t \in [0, A], \quad |\beta_{t} - \beta_{s}| \leq \kappa_{A} \ h(|s-t|) \ \} \]
also has probability one. Moreover, under the event $\{T_{0} < A \} \cap \mathcal{M}_{A}$, we have, for small $h >0$.
\[ \beta_{T_{0}-h}^{2} = |\beta_{T_{0} - h} - \beta_{T_{0}}|^{2} \leq {\kappa_{A}}^{2} \ h \log(1/h). \]
Since $\frac{1}{h \log(1/h)}$ is not integrable as $ h \to 0^{+}$, we deduce that, under the event \linebreak $\{T_{0} < A \} \cap \mathcal{M}_{A}$, we have $\int_{0}^{T_{0}} \frac{ds}{(\beta_{s})^{2}} = + \infty$. Therefore:
\begin{align*}
\mathbb{P} [ T_{0} < A] = \mathbb{P} [ \{ T_{0} < A \} \cap \mathcal{M}_{A} ]  \leq \mathbb{P} \left(  \int_{0}^{T_{0}} \frac{ds}{(\beta_{s})^{2}} = + \infty \right).
\end{align*}
Since $T_{0} < + \infty $ a.s., we have $\underset{A \to \infty}{\lim} \mathbb{P} [ T_{0} < A]  = 1$. Hence, letting $A \to \infty$ in the above, we deduce that:
\[  \mathbb{P} \left( \int_{0}^{T_{0}} \frac{ds}{(\beta_{s})^{2}} = + \infty \right) = 1 \]
as claimed. 

\textit{Second step}:
Now consider the original Bessel process $(\rho_{t}(x))_{t \geq 0}$. Suppose that $\delta \in (0,2)$. Then, by Thm 3.5 in \cite{zambotti2017random}, the process  $(\rho_{t}(x))_{t \geq 0}$ is equal in law to $(|\beta_{\gamma(t)}|^{\frac{1}{2-\delta}})_{t \geq 0}$, where $\beta$ is a Brownian motion started from $y := x^{2-\delta}$, and $\gamma : \mathbb{R}_{+} \rightarrow \mathbb{R}_{+}$ is the inverse of the increasing function $A: \mathbb{R}_{+} \rightarrow \mathbb{R}_{+}$ given by:
\[\forall u \geq 0, \qquad A(u) = \frac{1}{(2-\delta)^{2}} \int_{0}^{u} |\beta_{s}|^{\frac{2(\delta-1)}{2 - \delta}} ds . \]
Therefore, denoting by $T^{\beta}_{0}$ the hitting time of $0$ by the Brownian motion $\beta$,  we have:
\begin{align*}
\int_{0}^{T_{0}} \frac{ds}{(\rho_{s}(x))^{2}} & \overset{(d)}{=} \int_{0}^{A(T^{\beta}_{0})}  \frac{ds}{|\beta_{\gamma(s)}|^{\frac{2}{2-\delta}}} \\
& =  \int_{0}^{T^{\beta}_{0}}  \frac{1}{|\beta_{u}|^{\frac{2}{2-\delta}}} \frac{1}{(2-\delta)^{2}}  |\beta_{u}|^{\frac{2(\delta-1)}{2 - \delta}} du \\
& =   \frac{1}{(2-\delta)^{2}} \int_{0}^{T^{\beta}_{0}}  \frac{du}{{\beta_{u}}^{2}}
\end{align*}
where we have used the change of variable $u=\gamma(s)$ to get from the first line to the second one.  By the first step, the last integral is infinite a.s., so the claim follows. 

There still remains to treat the case $\delta =0$. By Thm 3.5 in \cite{zambotti2017random}, in that case, the process  $(\rho_{t}(x))_{t \geq 0}$ is equal in law to $\left( \left( \beta_{\gamma(t) \wedge T^{\beta}_{0}} \right)^{1/2} \right)_{t \geq 0}$, where $\beta$ is a Brownian motion started from $y := x^2$, $T^{\beta}_{0}$ is its hitting time of $0$ and $\gamma : \mathbb{R}_{+} \rightarrow \mathbb{R}_{+}$ is the inverse of the increasing function $A: \mathbb{R}_{+} \rightarrow \mathbb{R}_{+}$ given by:
\[\forall u \geq 0, \qquad A(u) = \frac{1}{4} \int_{0}^{u \wedge T^{\beta}_{0}} \beta_{s}^{-1} ds . \]
Then, the same computations as above yield the equality in law:
\begin{equation*}
\int_{0}^{T_{0}} \frac{ds}{(\rho_{s}(x))^{2}} \overset{(d)}{=} \frac{1}{4} \int_{0}^{T^{\beta}_{0}}  \frac{du}{{\beta_{u}}^{2}}
\end{equation*}
so the result follows as well.
\end{proof}


\subsection{Study of a martingale related to $\eta$}

Let $\delta \in [0,2)$ and $x >0$ be fixed. In the previous section,  we have shown that, a.s. :
\[ \int_{0}^{t} \frac{ds}{\rho_{s}(x)^{2}} \underset{t \to T_{0}(x)}{\longrightarrow} + \infty \]
As a consequence, for $\delta \in [0,1)$, a.s., the modification $\eta_{t}$ of the derivative at $x$ of the stochastic flow $\rho_{t}$ diverges at $T_{0}(x)$:
\[ \eta_{t}(x) = \mathbf{1}_{t < T_{0}(x)} \exp \left( \frac{1-\delta}{2} \int_{0}^{t} \frac{ds}{\rho_{s}(x)^{2}} \right) \underset{ t \uparrow T_{0}(x)}{\longrightarrow} + \infty . \]
However, since $\rho_{t}(x) \longrightarrow 0$ as $t \to T_{0}(x)$, this does not exclude the possibility that the product $\rho_{t}(x) \eta_{t}(x)$ converges as $t \to T_{0}(x)$. This motivates to study the process :
\begin{equation}
\label{martingale}
 D_{t} := \rho_{t}(x) \eta_{t}(x) = \mathbf{1}_{t < T_{0}(x)} \rho_{t}(x) \exp \left( \frac{1-\delta}{2} \int_{0}^{t} \frac{ds}{\rho_{s}(x)^{2}} \right).
\end{equation} 
As a matter of fact, we will show that $(D_{t})_{t \geq 0}$ is an $L^{p}$ continuous martingale for some $p \geq 1$.

\begin{rk}
The process $(D_{t})_{t \geq 0}$ appears as (one half times) the derivative of the stochastic flow associated with the squared Bessel process $X_{t}(x) = (\rho_{t}(x))^{2}$. Indeed, by applying formally the chain rule, we have, for all $t \geq 0$ and $x > 0$:
\[ \frac{d X_{t}(x)}{dx}  = 2 \rho_{t}(x)  \eta_{t}(x). \]
\end{rk}


\subsection{Continuity of $(D_{t})_{t \geq 0}$}

In this subsection we show that the process $(D_{t})_{t \geq 0}$ has a.s. continuous sample paths. By the expression (\ref{martingale}), continuity holds as soon as $T_{0}(x) = \infty$ a.s., i.e. as soon as $\delta \geq 2$. On the other hand, if $\delta \in [0,2)$ it suffices to prove that, a.s., $D_{t} \to 0$ as $t \uparrow T_{0}(x)$. This is the content of the following proposition.

\begin{prop}
\label{cv}
For all $\delta \in [0,2)$ and $x >0$, with probability one:
\[ \rho_{t} (x) \exp \left( \frac{1-\delta}{2} \int_{0}^{t} \frac{ds}{\rho_{s}(x)^{2}} \right) \underset{ t \to T_{0}(x)}{\longrightarrow} 0 . \]
\end{prop}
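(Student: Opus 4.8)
The plan is to recognize the process $D_{t} := \rho_{t}(x)\exp\big(\tfrac{1-\delta}{2}\int_{0}^{t}\rho_{s}(x)^{-2}\,ds\big)$ as a stochastic exponential whose intrinsic clock diverges at $T_{0}(x)$, and then to invoke the strong law of large numbers for Brownian motion. Fix $\delta\in[0,2)$ and $x>0$; I work on the stochastic interval $[0,T_{0})$ with $T_{0}:=T_{0}(x)$, and I write $A_{t}:=\int_{0}^{t}\rho_{s}(x)^{-2}\,ds$ and $M_{t}:=\int_{0}^{t}\rho_{s}(x)^{-1}\,dB_{s}$. On $[0,T_{0})$ the path $\rho(x)$ is continuous and strictly positive, so $A$ is finite, continuous and strictly increasing with $A_{0}=0$, and $M$ is a well-defined continuous local martingale with $\langle M\rangle_{t}=A_{t}$; by Lemma \ref{integral}, $A_{t}\to+\infty$ as $t\uparrow T_{0}$, a.s.

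First I would apply It\^o's formula to $\log\rho_{t}$ using \eqref{sdebessel}, which gives $\log\rho_{t}=\log x+\tfrac{\delta-2}{2}A_{t}+M_{t}$ for $t<T_{0}$. Adding $\tfrac{1-\delta}{2}A_{t}$, the two drift-in-$A$ terms combine into $-\tfrac12 A_{t}=-\tfrac12\langle M\rangle_{t}$, so that
\[ D_{t}=x\exp\!\Big(M_{t}-\tfrac12\langle M\rangle_{t}\Big)=x\,\mathcal{E}(M)_{t},\qquad t<T_{0}, \]
is the stochastic exponential of $M$. Since $A$ is a continuous, strictly increasing bijection of $[0,T_{0})$ onto $[0,\infty)$, the Dambis--Dubins--Schwarz theorem provides a standard Brownian motion $(W_{u})_{u\ge 0}$ with $M_{t}=W_{A_{t}}$ for $t<T_{0}$, whence $D_{t}=x\exp(W_{A_{t}}-\tfrac12 A_{t})$ on $[0,T_{0})$.

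To conclude: as $t\uparrow T_{0}$ one has $A_{t}\to\infty$, and by the strong law of large numbers for Brownian motion $W_{u}/u\to 0$ a.s., so $W_{u}-\tfrac12 u\to-\infty$ as $u\to\infty$; substituting $u=A_{t}$ yields $D_{t}\to 0$ a.s., which is the claim. The only genuinely nontrivial input is the divergence $A_{t}\to\infty$, i.e.\ Lemma \ref{integral}, already established — everything else is routine stochastic calculus. The one point needing a little care is that the whole argument lives on the random interval $[0,T_{0})$; this is harmless because $A$ maps $[0,T_{0})$ increasingly and bijectively onto $[0,\infty)$, but if one prefers to avoid the stochastic-interval time change one can instead note that $\mathcal{E}(M)$ is a nonnegative local martingale, hence a supermartingale, hence converges a.s.\ as $t\uparrow T_{0}$, and then rule out a strictly positive limit using the fact that a continuous local martingale with $\langle M\rangle_{T_{0}^{-}}=\infty$ cannot tend to $+\infty$.
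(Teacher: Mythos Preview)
The proposal is correct and follows essentially the same approach as the paper: both apply It\^o's formula to $\log\rho_{t}$ to obtain $\log(D_{t}/x)=M_{t}-\tfrac12\langle M\rangle_{t}$ with $M_{t}=\int_{0}^{t}\rho_{s}^{-1}\,dB_{s}$, then time-change via $A_{t}=\langle M\rangle_{t}$ (which diverges by Lemma~\ref{integral}) to a Brownian motion and conclude from its large-time behavior (the paper invokes the law of the iterated logarithm, you the strong law of large numbers---either suffices). The paper is more explicit about the localization on $[0,T_{0})$ via stopping times $\tau_{n}$ and disposes of the easy case $\delta\in[1,2)$ separately, but the core argument is identical.
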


\begin{proof}
If $\delta \in [1,2)$, then $\exp \left( \frac{1-\delta}{2} \int_{0}^{t} \frac{ds}{\rho_{s}(x)^{2}} \right) \leq 1$ for all $t \geq 0$. Since $\rho_{t} \longrightarrow 0$ as $t \to T_{0}(x)$, the claim follows at once. 

If $\delta \in [0,1)$, on the other hand, $\exp \left( \frac{1-\delta}{2} \int_{0}^{t} \frac{ds}{\rho_{s}(x)^{2}} \right) \underset{t \uparrow T_{0}(x)}{ \xrightarrow{\hspace*{0.9 cm}}} + \infty$ whereas $\rho_{t} \underset{t \to T_{0}(x)}{\xrightarrow{\hspace*{0.9 cm}}} 0$ so a finer analysis is needed. We have:
\[ \log \left[ \frac{\rho_{t}}{x}\exp \left( \frac{1-\delta}{2} \int_{0}^{t} \frac{ds}{\rho_{s}(x)^{2}} \right) \right] = \log{\frac{\rho_{t}}{x}} + \frac{1-\delta}{2} \int_{0}^{t} \frac{ds}{\rho_{s}^{2}} \]
Now, recall that a.s., for all $t < T_{0}$, we have:
\[  \rho_{t} = x + \frac{\delta-1}{2}\int_{0}^{t} \frac{ds}{\rho_{s}} + B_{t} \]
Hence, defining for all integer $ n \geq 1$ the $(\mathcal{F}_{t})_{t \geq 0}$-stopping time $\tau_{n}$ as:
\[\tau_{n} := \inf \{ t > 0, \ \rho_{t} \leq 1/n \} \wedge n, \]
we have:
\[  \rho_{t \wedge \tau_{n}} = x + \frac{\delta-1}{2}\int_{0}^{t \wedge \tau_{n}} \frac{ds}{\rho_{s}} + B_{t \wedge \tau_{n}}. \]
Hence, by It\^{o}'s lemma, we deduce that:
\[\log{\frac{\rho_{t \wedge \tau_{n}}}{x}} = \frac{\delta-1}{2} \int_{0}^{t \wedge \tau_{n}} \frac{ds}{\rho_{s}^{2}} + \int_{0}^{t \wedge \tau_{n}} \frac{dB_{s}}{\rho_{s}} - \frac{1}{2} \int_{0}^{t \wedge \tau_{n}}  \frac{ds}{\rho_{s}^{2}} \]
so that:
\begin{equation}
\label{logofmart}
 \log{\frac{\rho_{t \wedge \tau_{n}}}{x}} + \frac{1-\delta}{2} \int_{0}^{t \wedge \tau_{n}} \frac{ds}{\rho_{s}^{2}} = \int_{0}^{t \wedge \tau_{n}} \frac{dB_{s}}{\rho_{s}} - \frac{1}{2} \int_{0}^{t \wedge \tau_{n}}  \frac{ds}{\rho_{s}^{2}}. 
\end{equation}
Consider now the random time change: 
\begin{alignat*}{2} 
A \colon [0,T_{0}) &\to \mathbb{R}_{+} \\
                 t               &\mapsto     A_{t} := \int_{0}^{t} \frac{ds}{\rho_{s}^{2}}. 
\end{alignat*}                 
Note that $A$ is differentiable with strictly positive derivative. Moreover, since $A_{t} \underset{t \to T_{0}}{\longrightarrow} + \infty$ a.s. by Lemma \ref{integral}, we deduce that $A$ is a.s. onto. Hence, a.s., $A$ is a diffeomorphism $[0,T_{0}) \rightarrow \mathbb{R}_{+}$, the inverse of which we denote by 
\begin{align*} 
C \colon   \mathbb{R}_{+} & \rightarrow [0,T_{0}) \\
                  u & \mapsto C_{u} .     
\end{align*}                 
Let $\beta_{u}:= \int_{0}^{C_{u}} \frac{dB_{r}}{\rho_{r}},  u \geq 0$. Then $\beta$ is a local martingale started at $0$ with quadratic variation $\langle \beta, \beta \rangle_{u}  = u$, so by L\'{e}vy's theorem it is a Brownian motion. The equality (\ref{logofmart}) can now be rewritten:
\[ \log{\frac{\rho_{t \wedge \tau_{n}}}{x}} + \frac{1-\delta}{2} \int_{0}^{t \wedge \tau_{n}} \frac{ds}{\rho_{s}^{2}} = \beta_{A_{t \wedge \tau_{n}}} - \frac{1}{2} A_{t \wedge \tau_{n}}. \]
Letting $n \to \infty$, we obtain, for all $t < T_{0}$:
\[ \log{\frac{\rho_{t}}{x}} + \frac{1-\delta}{2} \int_{0}^{t} \frac{ds}{\rho_{s}^{2}} = \beta_{A_{t}} - \frac{1}{2} A_{t}. \]
By the asymptotic properties of Brownian motion (see Corollary (1.12), Chapter II in \cite{revuz2013continuous}), we know that, a.s.:
\[ \underset{s \to + \infty}{\limsup} \ \frac{\beta_{s}}{h(s)} = 1 \]
where $h(s) := \sqrt{2 s \log\log s }$. In particular, a.s., there exists $T>0$ such that, for all $t \geq T$, we have $ \beta_{t} \leq 2 h(t) $. Since, a.s., $A_{t} \underset{ t \to T_{0}}{\longrightarrow} + \infty$, we deduce that:
\begin{align*}
\underset{t \to + \infty}{\limsup} \left( \beta_{A_{t}} - \frac{1}{2} A_{t} \right) &\leq  \underset{t \to + \infty}{\limsup} \left( 2 h(A_{t}) - \frac{1}{2} A_{t} \right) \\
& = - \infty .
\end{align*}
Hence, a.s. : 
\[ \log  \left[ \frac{\rho_{t}}{x}\exp \left( \frac{1-\delta}{2} \int_{0}^{t} \frac{ds}{\rho_{s}(x)^{2}} \right) \right] \underset{ t \uparrow T_{0}(x)}{\longrightarrow} - \infty \]
i.e.
\[ \rho_{t} \exp \left( \frac{1-\delta}{2} \int_{0}^{t} \frac{ds}{\rho_{s}(x)^{2}} \right) \underset{ t \uparrow T_{0}(x)}{\longrightarrow} 0 \]
as claimed.
\end{proof}


\subsection{Martingale property of $(D_{t})_{t \geq 0}$}
\label{eta}

Let $\delta \geq 0$ and $x > 0$ be fixed. We show in this section that $(D_{t})_{t \geq 0}$ is an $(\mathcal{F}_{t})_{t \geq 0}$ martingale which, up to a positive constant, corresponds to a Girsanov-type change of probability measure. 

Recall that, by definition:
\begin{equation}
\label{expeta}
D_{t} = \mathbf{1}_{t < T_{0}(x)}  \rho_{t}(x) \exp \left( -\frac {\delta -1}{2} \int_{0}^{t} \frac{ds}{\rho_{s}^{2}} \right). 
\end{equation}

\begin{nota}
For all $a \geq 0$ and $t \geq 0$, we denote by ${P^{a}_{x}} \big\rvert_{ \mathcal{F}_{t}}$ the image of the probability measure ${P^{a}_{x}}$ under the restriction map:
\begin{align*}
(C(\mathbb{R}_{+}), \mathcal{B} ( C(\mathbb{R}_{+})) &\to  (C([0,t]),\mathcal{F}_{t}) \\
w &\mapsto  w \rvert_{[0,t]}
\end{align*}
\end{nota}

\begin{prop}
\label{abscont}
Let $\delta \geq 0$ and $x>0$. Then, for all $t \geq 0$, the law $ {P^{\delta+2}_{x}}_{| \mathcal{F}_{t}}$ is absolutely continuous w.r.t. the law ${P^{\delta}_{x}}_{| \mathcal{F}_{t}}$, and the corresponding Radon-Nikodym derivative is given by:
\[ \frac{dP^{\delta+2}_{x}}{dP^{\delta}_{x}}\biggr\rvert_{\mathcal{F}_{t}} (\rho) \overset{a.s.}{=}  \mathbf{1}_{t < T_{0}(x)} \frac{\rho_{t}(x)}{x} \exp \left( -\frac {\delta -1}{2} \int_{0}^{t} \frac{ds}{\rho_{s}^{2}} \right). \]
\end{prop}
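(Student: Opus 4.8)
The plan is to prove the statement by a Girsanov argument, after a localization that removes the singularity of the drift at the origin. First, I would fix $n$ large enough that $1/n < x$ and work with the stopping time $\tau_n := \inf\{t>0:\rho_t(x)\leq 1/n\}\wedge n$ already introduced in Section \ref{diffflow}; note that $\tau_n < T_0(x)$ and $\tau_n\uparrow T_0(x)$ a.s. On $[0,\tau_n]$ the drift $1/\rho_s$ in \eqref{sdebessel} is bounded by $n$, so Novikov's criterion applies and the process
\[ Z^n_t := \exp\left( \int_0^{t\wedge\tau_n}\frac{dB_s}{\rho_s} - \frac12\int_0^{t\wedge\tau_n}\frac{ds}{\rho_s^2}\right) \]
is a uniformly integrable $P^\delta_x$-martingale with $E^\delta_x[Z^n_t]=1$. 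Under the probability measure $\tilde P^n := Z^n_\infty\, dP^\delta_x$, Girsanov's theorem shows that $\tilde B_t := B_t - \int_0^{t\wedge\tau_n}\rho_s^{-1}\,ds$ is a Brownian motion and $\rho_{t\wedge\tau_n} = x + \tfrac{\delta+1}{2}\int_0^{t\wedge\tau_n}\rho_s^{-1}\,ds + \tilde B_{t\wedge\tau_n}$; since this SDE, stopped at $\tau_n$, has a Lipschitz coefficient on $[1/n,\infty)$, pathwise uniqueness identifies the law of the stopped path $\rho^{\tau_n}$ under $\tilde P^n$ with its law under $P^{\delta+2}_x$. Using that $G\,\mathbf{1}_{\tau_n>t}$ is $\mathcal{F}_{t\wedge\tau_n}$-measurable whenever $G$ is bounded and $\mathcal{F}_t$-measurable, together with optional stopping, this yields
\[ E^{\delta+2}_x\!\left[G\,\mathbf{1}_{\tau_n>t}\right] = E^\delta_x\!\left[G\, Z^n_t\,\mathbf{1}_{\tau_n>t}\right]. \]

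Next I would identify $Z^n$ explicitly: applying It\^o's formula to $\log\rho_{t\wedge\tau_n}$ with \eqref{sdebessel} gives $\log(\rho_{t\wedge\tau_n}/x) = \int_0^{t\wedge\tau_n}\rho_s^{-1}\,dB_s + \tfrac{\delta-2}{2}\int_0^{t\wedge\tau_n}\rho_s^{-2}\,ds$, whence, comparing with \eqref{expeta},
\[ Z^n_t = \frac{\rho_{t\wedge\tau_n}(x)}{x}\exp\left(-\frac{\delta-1}{2}\int_0^{t\wedge\tau_n}\frac{ds}{\rho_s^2}\right) = \frac{1}{x}\,D_{t\wedge\tau_n} \]
(the indicator in \eqref{expeta} being $1$ since $\tau_n<T_0$). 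In particular $(D_t)_{t\geq 0}$ is a continuous local $P^\delta_x$-martingale with localizing sequence $(\tau_n)$ and $D_0 = x$. It then remains to let $n\to\infty$ in the identity above: on the left $\mathbf{1}_{\tau_n>t}\uparrow\mathbf{1}_{T_0>t}$, and since $\delta+2\geq 2$ the dichotomy recalled in Section \ref{basicsection} gives $T_0 = +\infty$ $P^{\delta+2}_x$-a.s., so the left side converges to $E^{\delta+2}_x[G]$; on the right, $Z^n_t\mathbf{1}_{\tau_n>t} = \tfrac1x D_{t\wedge\tau_n}\mathbf{1}_{\tau_n>t}$ converges a.s. to $\tfrac1x D_t$, because on $\{t<T_0\}$ one has $D_{t\wedge\tau_n}=D_t$ for $n$ large, while on $\{t\geq T_0\}$ one has $D_{t\wedge\tau_n}=D_{\tau_n}\to 0$ by Proposition \ref{cv} and $D_t=0$. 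Taking $G\geq 0$, monotone convergence gives $E^{\delta+2}_x[G] = \tfrac1x E^\delta_x[G\,D_t]$; the choice $G\equiv 1$ yields $E^\delta_x[D_t]=x<\infty$, and this integrability allows extending the identity to arbitrary bounded $\mathcal{F}_t$-measurable $G$ by dominated convergence. This is exactly the asserted absolute continuity, with Radon--Nikodym derivative $\tfrac1x D_t = \mathbf{1}_{t<T_0(x)}\tfrac{\rho_t(x)}{x}\exp(-\tfrac{\delta-1}{2}\int_0^t\rho_s^{-2}\,ds)$.

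The Girsanov and It\^o computations are routine; the delicate point will be the passage to the limit $n\to\infty$ when $\delta<2$, where $\tau_n\uparrow T_0<\infty$ and $D$ is unbounded. Showing that no mass is lost in the limit — equivalently, that $D$ is a genuine martingale rather than merely a nonnegative supermartingale — rests crucially on Proposition \ref{cv} (so that $D_{\tau_n}\to 0$ on $\{T_0\leq t\}$) together with the non-attainability of $0$ by the $(\delta+2)$-dimensional Bessel process. Without Proposition \ref{cv}, Fatou's lemma would only give the inequality $E^\delta_x[D_t]\leq x$.
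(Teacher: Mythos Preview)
Your argument is correct and follows essentially the same route as the paper's: localize away from the origin, apply Novikov and Girsanov, identify the exponential martingale via It\^o's formula for $\log\rho$, and pass to the limit using monotone convergence together with the fact that $T_0=+\infty$ under $P^{\delta+2}_x$.

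One remark on your closing paragraph: the reliance on Proposition~\ref{cv} is unnecessary, and the claim that without it one would only obtain the supermartingale inequality $E^\delta_x[D_t]\leq x$ is not correct. On $\{\tau_n>t\}$ you have $t\wedge\tau_n=t$, so in fact $Z^n_t\,\mathbf{1}_{\tau_n>t}=\tfrac1x\,D_t\,\mathbf{1}_{\tau_n>t}$ \emph{exactly}, with $D_t$ independent of $n$; since $D_t\geq 0$ and $\mathbf{1}_{\tau_n>t}\uparrow\mathbf{1}_{T_0>t}$, monotone convergence applies directly on both sides of your identity, with no need to control $D_{\tau_n}$ as $\tau_n\uparrow T_0$. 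The paper's proof makes exactly this observation (writing the integrand on the $P^\delta_x$ side as the unstopped density times $\mathbf{1}_{t<T_\epsilon}$) and never invokes Proposition~\ref{cv}; the martingale property of $D$ --- and in particular the equality $E^\delta_x[D_t]=x$ --- is then deduced \emph{a posteriori}, as a corollary of the absolute-continuity relation, rather than used as an input to it.
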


\begin{proof}
 Fix $\epsilon > 0$. Under ${P^{\delta}_{x}}_{| \mathcal{F}_{t}}$, the canonical process $\rho$ stopped at $T_{\epsilon}$ satisfies the following SDE on $[0,t]$:
\[ \rho_{s \wedge T_{\epsilon}} = x + \frac{\delta-1}{2} \int_{0}^{s \wedge T_{\epsilon}} \frac{ds}{\rho_{s}} + B_{s \wedge T_{\epsilon}}. \]
Consider the process $M^{\epsilon}$ defined on $[0,t]$ by:
\[ M^{\epsilon}_{s} := \int_{0}^{s \wedge T_{\epsilon}} \frac{dB_{u}}{\rho_{u}} \] 
$M^{\epsilon}$ is an $L^{2}$ martingale on $[0,t]$. The exponential local martingale thereto associated is:
\[ \mathcal{E}(M^{\epsilon})_{s} = \exp \left(\int_{0}^{s \wedge T_{\epsilon}} \frac{dB_{u}}{\rho_{u}} - \frac{1}{2} \int_{0}^{s \wedge T_{\epsilon}} \frac{du}{\rho_{u}^{2}} \right). \]
Since, by It\^{o}'s lemma:
\[ \log \left( \frac{\rho_{s \wedge T_{\epsilon}}}{x} \right) = \int_{0}^{s \wedge T_{\epsilon}}  \frac{dB_{u}}{\rho_{u}} + \left( \frac{\delta}{2} - 1 \right) \int_{0}^{s \wedge T_{\epsilon}} \frac{du}{\rho_{u}^{2}}, \]
we have:
\begin{align*}
\mathcal{E}(M^{\epsilon})_{s} &= \exp \left[ \log \left(  \frac{\rho_{s \wedge T_{\epsilon}}}{x} \right)  - \frac{\delta-1}{2} \int_{0}^{s \wedge T_{\epsilon}} \frac{du}{\rho_{u}^{2}} \right] \\
&=   \frac{\rho_{s \wedge T_{\epsilon}}}{x} \exp \left[ - \frac{\delta-1}{2} \int_{0}^{s \wedge T_{\epsilon}} \frac{du}{\rho_{u}^{2}} \right].
\end{align*}
Note that 
\begin{align*}
\mathbb{E} \left[ \exp \left( \frac{1}{2} \langle M^{\epsilon}, M^{\epsilon} \rangle_{t} \right) \right] &\leq \exp \left( \frac{t}{2\epsilon} \right) \\
&< \infty
\end{align*}
so that, by Novikov's criterion, $\mathcal{E}(M^{\epsilon})$ is a uniformly integrable martingale on $[0,t]$. So we may consider the probability measure $\mathcal{E}(M^{\epsilon}) P^{\delta}_{x} \big\rvert_{\mathcal{F}_{t}}$. \\
Note also that:
\[ \langle M^{\epsilon}, B \rangle_{t} = \int_{0}^{s \wedge T_{\epsilon}} \frac{du}{\rho_{u}}. \]
Hence, by Girsanov's theorem, under the probability measure $\mathcal{E}(M^{\epsilon}) P^{\delta}_{x} \big\rvert_{\mathcal{F}_{t}}$, the process :
\[ \rho_{s \wedge T_{\epsilon}} - x - \frac{\delta+1}{2} \int_{0}^{s \wedge T_{\epsilon}} \frac{du}{\rho_{u}} \]
is a local martingale, with quadratic variation given by $s \wedge T_{\epsilon}$. Therefore, by Theorem (1.7) in Chapter V of \cite{revuz2013continuous}, there exists, on some enlarged probability space, a Brownian motion $\beta$ such that, a.s.:
\[ \forall s \in [0,t], \ \rho_{s \wedge T_{\epsilon}} = x + \frac{\delta+1}{2} \int_{0}^{s \wedge T_{\epsilon}} \frac{du}{\rho_{u}} + \beta_{s \wedge T_{\epsilon}}. \]
Denote by $\bar{\rho}$ the unique strong solution on $[0,t]$ of the SDE:
\[  \bar{\rho}_{s} = x + \frac{\delta+1}{2} \int_{0}^{s} \frac{du}{\bar{\rho}_{u}} + \beta_{s}. \]
Then, by strong uniqueness of the solution to this SDE, we deduce that, under $\mathcal{E}(M^{\epsilon}) P^{\delta}_{x} \big\rvert_{\mathcal{F}_{t}}$, a.s.:
\[ \forall s\in [0, t], \ s < T_{\epsilon} \implies \rho_{s} = \bar{\rho}_{s} . \]
Since $\bar{\rho}$ has the law of a $\delta +2$-dimensional Bessel process started at $x$, we deduce that, for all $F : C([0,T], \mathbb{R}_{+}) \to \mathbb{R}_{+}$ Borel, we have:
\[ E^{\delta}_{x} \left[ \mathcal{E}(M^{\epsilon}) F(\rho) \mathbf{1}_{t < T_{\epsilon}} \right] = E^{\delta + 2}_{x}[F(\rho) \mathbf{1}_{t < T_{\epsilon}}] \]
i.e.:
\[ E^{\delta}_{x} \left[ \frac{\rho_{t}}{x} \exp \left( - \frac{\delta-1}{2} \int_{0}^{t} \frac{ds}{\rho_{s}^{2}} \right) F(\rho) \mathbf{1}_{t < T_{\epsilon}} \right] = E^{\delta + 2}_{x}[F(\rho) \mathbf{1}_{t < T_{\epsilon}}] . \]
Letting $\epsilon \to 0$, by the monotone convergence theorem, we obtain:
\[  E^{\delta}_{x} \left[ \frac{\rho_{t}}{x} \exp \left( - \frac{\delta-1}{2} \int_{0}^{t} \frac{ds}{\rho_{s}^{2}} \right) F(\rho) \mathbf{1}_{t < T_{0}} \right] = E^{\delta + 2}_{x}[F(\rho) \mathbf{1}_{t < T_{0}}] . \] 
But, since $P^{\delta + 2}_{x}[T_{0} < + \infty] = 0$, this yields:
\[ E^{\delta}_{x} \left[ \frac{\rho_{t}}{x} \exp \left( - \frac{\delta-1}{2} \int_{0}^{t} \frac{ds}{\rho_{s}^{2}} \right) F(\rho) \mathbf{1}_{t < T_{0}} \right] = E^{\delta+2}_{x}[F(\rho)] \]
as stated.
\end{proof}

\begin{rk}
Proposition \ref{abscont} is actually a particular case of a more general result. Indeed, for all $ x > 0$, $ t \geq 0$, and $\delta' \geq \delta \geq 0$, such that $\delta' \geq 2$, $ {P^{\delta'}_{x}}_{| \mathcal{F}_{t}}$ is absolutely continuous w.r.t. the law ${P^{\delta}_{x}}_{| \mathcal{F}_{t}}$, and the corresponding Radon-Nikodym derivative is given by:
\begin{equation}
\label{generalabscont}
 \frac{dP^{\delta'}_{x}}{dP^{\delta}_{x}}\biggr\rvert_{\mathcal{F}_{t}} (\rho) \overset{a.s.}{=}  \mathbf{1}_{t < T_{0}(x)} \left( \frac{\rho_{t}(x)}{x} \right)^{\frac{\delta'-\delta}{2}} \exp \left[ -\frac {\delta' -\delta}{2} \left(\frac{\delta'+\delta}{4} -1 \right) \int_{0}^{t} \frac{ds}{\rho_{s}^{2}} \right]. 
\end{equation}
The proof of this fact is in all respect similar to that of Proposition \ref{abscont} above.
\end{rk}

\begin{cor}
$(D_{t})_{t \geq 0}$ is an $(\mathcal{F}_{t})_{t \geq 0}$ continuous martingale
\end{cor}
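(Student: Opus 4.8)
The plan is to deduce the martingale property of $(D_t)_{t\geq 0}$ directly from Proposition \ref{abscont}, using the continuity established in Proposition \ref{cv}. First I would observe that Proposition \ref{abscont} exhibits, for each fixed $t\geq 0$, a Radon-Nikodym derivative
\[
\frac{dP^{\delta+2}_{x}}{dP^{\delta}_{x}}\biggr\rvert_{\mathcal{F}_{t}}(\rho) \overset{a.s.}{=} \frac{D_t}{x},
\]
so that $D_t = x\,\frac{dP^{\delta+2}_{x}}{dP^{\delta}_{x}}\rvert_{\mathcal{F}_{t}}(\rho)$ is, up to the positive constant $x$, exactly the density process of $P^{\delta+2}_x$ with respect to $P^{\delta}_x$ along the filtration $(\mathcal{F}_{t})_{t\geq 0}$.

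The key step is then the standard fact that such a density process is automatically an $(\mathcal{F}_t)$-martingale under $P^{\delta}_x$. Concretely, for $0 \leq s \leq t$ and any bounded $\mathcal{F}_s$-measurable $G$, the consistency of the Radon-Nikodym derivatives gives
\[
E^{\delta}_{x}\!\left[ \frac{D_t}{x}\, G \right] = E^{\delta+2}_{x}[G] = E^{\delta}_{x}\!\left[ \frac{D_s}{x}\, G \right],
\]
where the first and last equalities use Proposition \ref{abscont} at times $t$ and $s$ respectively (noting that $G$, being $\mathcal{F}_s$-measurable, is also $\mathcal{F}_t$-measurable), and the middle equality is just $E^{\delta+2}_x[G]=E^{\delta+2}_x[G]$. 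Since $G$ is arbitrary, this yields $E^{\delta}_{x}[D_t \mid \mathcal{F}_s] = D_s$ a.s. Integrability, namely $E^{\delta}_x[|D_t|] = E^{\delta}_x[D_t] = x < \infty$, follows from taking $G\equiv 1$ and the nonnegativity of $D_t$. Hence $(D_t)_{t\geq 0}$ is an $(\mathcal{F}_t)$-martingale under $P^{\delta}_x$.

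Finally, continuity of the sample paths is exactly the content of the preceding subsection: by the expression \eqref{martingale}, $D$ is continuous on $[0,T_0(x))$ and identically zero on $[T_0(x),\infty)$, and Proposition \ref{cv} shows that $D_t \to 0$ as $t\uparrow T_0(x)$ a.s., so the two pieces match up and $D$ has a.s. continuous paths. (When $\delta\geq 2$ one has $T_0(x)=\infty$ a.s. and continuity is immediate from \eqref{martingale}.) Combining the martingale property with path-continuity gives the claim. I do not expect any serious obstacle here: the only mild point to be careful about is that the identity in Proposition \ref{abscont} absorbs the indicator $\mathbf{1}_{t<T_0}$ correctly (which it does, since $P^{\delta+2}_x[T_0<\infty]=0$), so that the density process is genuinely the process $D_t/x$ rather than some modification of it.
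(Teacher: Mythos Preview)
Your proposal is correct and follows essentially the same approach as the paper: both identify $D_t/x$ with the Radon--Nikodym density of $P^{\delta+2}_x$ with respect to $P^{\delta}_x$ on $\mathcal{F}_t$ via Proposition~\ref{abscont}, invoke the standard fact that a density process along a filtration is a martingale, and appeal to Proposition~\ref{cv} for continuity. Your version simply spells out the martingale verification in slightly more detail than the paper does.
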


\begin{proof}
The process $(D_{t})_{t \geq 0}$  is continuous. Moreover, for all $t \geq 0$, $\frac{1}{x} D_{t}$ is the Radon-Nikodym derivative of ${P^{\delta+2}_{x}}_{| \mathcal{F}_{t}}$ w.r.t. ${P^{\delta}_{x}}_{| \mathcal{F}_{t}}$. Therefore $(\frac{1}{x} D_{t})_{t \geq 0}$ is an $(\mathcal{F}_{t})_{t \geq 0}$ martingale, so $(D_{t})_{t \geq 0}$ is  a martingale as well, and the claim follows.
\end{proof}
 
 
\subsection{Moment estimates for the martingale $(D_{t})_{t \geq 0}$ }

In this section, we prove that the martingale $(D_{t})_{t \geq 0}$ is actually in $L^{p}$ for some $p \geq 1$. We first recall the following fact:

\begin{lm}
\label{mmtsbes}
For all $a \geq 0$, $t \geq 0$, and $m \geq 0$, we have:
\[ E^{a}_{x} (\rho_{t}^{m}) < \infty . \]
\end{lm}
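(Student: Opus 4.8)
The plan is to prove $E^{a}_{x}(\rho_{t}^{m}) < \infty$ by reducing to a moment estimate for the squared Bessel process $X_{t} = \rho_{t}^{2}$, since $E^{a}_{x}(\rho_{t}^{m}) = E^{a}_{x}(X_{t}^{m/2})$ and it suffices to bound $E(X_{t}^{q})$ for all $q \geq 0$. First I would invoke the explicit law of $X_{t}$ under $Q^{a}_{x^{2}}$: for $a > 0$, $X_{t}$ is (up to the scaling factor $t$) a noncentral chi-squared variable, and more simply one can use the Laplace transform recalled in the earlier remark, namely $E^{a}_{x}(\exp(-\lambda X_{t})) = (1+2\lambda t)^{-a/2}\exp(-\lambda x^{2}/(1+2\lambda t))$, which is finite and smooth for $\lambda \geq 0$; analyticity of this Laplace transform near $\lambda = 0$ already shows all polynomial moments of $X_{t}$ are finite. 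Alternatively, and perhaps more transparently, I would use the additivity property of squared Bessel processes: $X_{t}$ under $Q^{a}_{x^{2}}$ has the same law as the sum of a squared Bessel process of dimension $a$ started at $0$ and (independently, when $x>0$) the square of a Brownian motion with drift, both of which manifestly have all moments finite by Gaussian/Gamma tail bounds.

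For a self-contained probabilistic argument avoiding special-function input, I would instead argue directly from the SDE. Applying It\^o's formula to $X_{t}^{q}$ for $q \geq 1$ and using a localization by stopping times $S_{n} = \inf\{t : X_{t} \geq n\}$, one gets
\begin{align*}
E\left[ X_{t \wedge S_{n}}^{q} \right] = x^{2q} + 2q\,\delta\, E\left[ \int_{0}^{t \wedge S_{n}} X_{s}^{q-1}\,ds \right] + 2q(q-1)\, E\left[ \int_{0}^{t\wedge S_{n}} X_{s}^{q-1}\,ds \right],
\end{align*}
where I have replaced $a$ by $\delta$ in the drift coefficient and used that the quadratic variation of $2\int \sqrt{X_{s}}\,dB_{s}$ is $4\int X_{s}\,ds$; the stochastic integral term vanishes in expectation by the localization. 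Since $X_{s}^{q-1} \leq 1 + X_{s}^{q}$, this gives a bound of the form $E[X_{t\wedge S_{n}}^{q}] \leq C_{1} + C_{2}\int_{0}^{t} E[X_{s \wedge S_{n}}^{q}]\,ds$ with constants independent of $n$, and Gr\"onwall's lemma yields $E[X_{t\wedge S_{n}}^{q}] \leq C_{1} e^{C_{2} t}$ uniformly in $n$. Letting $n \to \infty$ and applying Fatou's lemma gives $E^{a}_{x}(X_{t}^{q}) < \infty$ for all integer $q \geq 1$, hence for all real $q \geq 0$ by Jensen/H\"older, and therefore $E^{a}_{x}(\rho_{t}^{m}) = E^{a}_{x}(X_{t}^{m/2}) < \infty$ for all $m \geq 0$.

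The only mild subtlety — and the step I would be most careful about — is the justification that the local martingale part genuinely has zero expectation after the stopping at $S_{n}$: before the stopping time $X$ is bounded by $n$, so $\sqrt{X_{s}}\mathbf{1}_{s \leq S_{n}}$ is bounded and the integrand $q X_{s}^{q-1}\sqrt{X_{s}}\mathbf{1}_{s\leq S_n}$ is bounded, making $\int_{0}^{t\wedge S_n} q X_s^{q-1}\cdot 2\sqrt{X_s}\,dB_s$ a genuine (square-integrable) martingale; this is routine. One should also note the case $a=0$, where $T_0 < \infty$ a.s. and $\rho$ is absorbed (or the squared Bessel process stays at $0$), but the same It\^o/Gr\"onwall computation goes through verbatim with $\delta = 0$, and in fact the bound only improves. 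So I would present the Gr\"onwall argument as the main proof, optionally remarking that the result is also immediate from the explicit Laplace transform of the one-dimensional marginals of $Q^{a}$.
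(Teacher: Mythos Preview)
Your argument is correct (modulo a harmless arithmetic slip: the drift contribution in It\^o's formula for $X_{t}^{q}$ is $qa\,X_{t}^{q-1}\,dt$, not $2q\delta\,X_{t}^{q-1}\,dt$), but it takes a genuinely different route from the paper. The paper's proof is three lines: by the comparison lemma (Lemma~\ref{weakcomp}) one has $E^{a}_{x}(\rho_{t}^{m}) \leq E^{d}_{x}(\rho_{t}^{m})$ for any integer $d \geq a$, and since under $P^{d}_{x}$ the process $\rho$ is the Euclidean norm of a $d$-dimensional Brownian motion, $\rho_{t}$ is the norm of a Gaussian vector and hence has all moments finite. Your It\^o--Gr\"onwall argument on the squared process is more self-contained (it does not require the comparison principle or the integer-dimension representation), and is the kind of estimate that would survive in settings where no such comparison is available; the paper's proof, by contrast, is shorter and exploits structure (monotonicity in the dimension and the explicit realisation at integer dimensions) that is specific to Bessel processes and already established earlier in the paper. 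Your side remark that finiteness of all moments also follows immediately from the analyticity of the Laplace transform of $X_{t}$ near the origin is correct and is arguably the quickest route of all; the brief aside about additivity is slightly garbled (the $Q^{0}_{x^{2}}$ piece is not the square of a Brownian motion with drift), but this does not affect your main argument.
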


\begin{proof}
Denote by $d$ any integer such that $d \geq a$. By Lemma \ref{weakcomp}, we have:  
\[ E^{a}_{x} (\rho_{t}^{m}) \leq E^{d}_{x} (\rho_{t}^{m}) \]
Since $P^{d}_{x}$ is the law of $(||B_{s}||)_{s \geq 0}$, where $(B_{s})_{s \geq 0}$ is a $d$-dimensional Brownian motion and $|| \cdot ||$ is the Euclidean norm in $\mathbb{R}^{d}$ (see \cite{revuz2013continuous}, Chapter 11), this inequality can be rewritten as:
\[ E^{a}_{x} (\rho_{t}^{m}) \leq \mathbb{E} \left( ||B_{t}||^{m} \right) \]
Since $B_{t}$ is a Gaussian random variable, $\mathbb{E} \left( ||B_{t}||^{m} \right)$ is finite, and the result follows.
\end{proof}

\begin{prop}
\label{lp}
$(D_{t})_{t \geq 0}$ is an $L^{p}$ martingale for all finite positive number $p$ such that $p \leq p(\delta)$, where $p(\delta) \in [1, + \infty]$ is given by:
\begin{align}
\label{pdelta} 
p(\delta) :=  \begin{cases} 
                     \frac{(2-\delta)^{2}}{4(1-\delta)} \ &\text{if} \ \delta < 1, \\
                     + \infty \ &\text{if} \ \delta \geq 1. \\
                     \end{cases} 
\end{align}
Moreover the above statement is sharp: for $\delta < 1$ and $t >0$, the random variable $D_{t}$ is not in $L^{p}$ for $p > p(\delta)$. 
\end{prop}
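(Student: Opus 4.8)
The plan is to express $E^{\delta}_{x}(D_{t}^{p})$ as an exponential moment of the additive functional $\int_{0}^{t}\mathrm{d}s/\rho_{s}^{2}$ of an \emph{auxiliary} Bessel process, and then invoke the sharp criterion for finiteness of such moments. Two cases are trivial. If $p\le 1$ there is nothing to prove beyond the martingale property: for $p<1$, Jensen's inequality gives $E^{\delta}_{x}(D_{t}^{p})\le (E^{\delta}_{x}D_{t})^{p}=x^{p}$. If $\delta\ge 1$ then $p(\delta)=+\infty$, and since $\tfrac{1-\delta}{2}\le 0$ the exponential factor in \eqref{expeta} is $\le 1$, so $0\le D_{t}\le\rho_{t}(x)$ and $D_{t}\in L^{p}$ for every finite $p$ by Lemma~\ref{mmtsbes}; being a martingale, $(D_{t})_{t\ge 0}$ is then an $L^{p}$ martingale. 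Henceforth assume $\delta<1$ and $p>1$.

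\emph{Reduction.} Writing $A_{t}:=\int_{0}^{t}\mathrm{d}s/\rho_{s}(x)^{2}$, the It\^o computation from the proof of Proposition~\ref{cv}, together with the Lamperti-type time change used there, gives, for a suitable Brownian motion $\beta$,
\[ \rho_{s}(x)=x\exp\!\Big(\beta_{A_{s}}+\big(\tfrac{\delta}{2}-1\big)A_{s}\Big),\qquad t=x^{2}\!\int_{0}^{A_{t}}\!\exp\!\big(2\beta_{w}+(\delta-2)w\big)\,\mathrm{d}w. \]
Since $\big(\tfrac{\delta}{2}-1\big)+\tfrac{1-\delta}{2}=-\tfrac12$, this turns \eqref{martingale} into $D_{t}=x\,\mathbf{1}_{A_{t}<\infty}\exp\!\big(\beta_{A_{t}}-\tfrac12 A_{t}\big)$, where $A_{t}$ is a stopping time for the filtration generated by $\beta$ and $\{t<T_{0}(x)\}=\{A_{t}<\infty\}$. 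Hence $D_{t}^{p}=x^{p}\,\mathbf{1}_{A_{t}<\infty}\,\mathcal{E}(p\beta)_{A_{t}}\exp\!\big(\tfrac{p(p-1)}{2}A_{t}\big)$ with $\mathcal{E}(p\beta)_{v}=\exp(p\beta_{v}-\tfrac{p^{2}}{2}v)$. Applying optional stopping to the nonnegative martingale $\mathcal{E}(p\beta)$ at $A_{t}\wedge n$, letting $n\to\infty$ by monotone convergence, and observing that under the Girsanov measure $\mathrm{d}\tilde{P}=\mathcal{E}(p\beta)_{n}\,\mathrm{d}P$ on $\mathcal{F}^{\beta}_{n}$ the process $\tilde\beta_{\cdot}:=\beta_{\cdot}-p\,\cdot$ is Brownian and $A_{t}$ solves $x^{2}\int_{0}^{A_{t}}\exp(2\tilde\beta_{w}+(2p+\delta-2)w)\,\mathrm{d}w=t$ — i.e. $A_{t}$ has, under $\tilde{P}$, the law of $\int_{0}^{t}\mathrm{d}s/\rho_{s}^{2}$ for a Bessel process of dimension $2p+\delta>2$ started at $x$, so $A_{t}<\infty$ $\tilde{P}$-a.s. — we obtain
\[ E^{\delta}_{x}\big(D_{t}^{p}\big)=x^{p}\,E^{2p+\delta}_{x}\!\left[\exp\!\left(\frac{p(p-1)}{2}\int_{0}^{t}\frac{\mathrm{d}s}{\rho_{s}^{2}}\right)\right]. \]

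\emph{Conclusion.} A Bessel process of dimension $d>2$ (index $\nu=\tfrac{d}{2}-1>0$) satisfies $E^{d}_{x}\big[\exp(\theta\int_{0}^{t}\mathrm{d}s/\rho_{s}^{2})\big]<\infty$ if and only if $\theta\le\tfrac{\nu^{2}}{2}$. Applied with $d=2p+\delta$ and $\theta=\tfrac{p(p-1)}{2}$, the condition $\theta\le\tfrac{\nu^{2}}{2}$ reads $p(p-1)\le(p+\tfrac{\delta}{2}-1)^{2}$, which (expanding, and using $\delta<1$) is equivalent to $p(1-\delta)\le\tfrac{(2-\delta)^{2}}{4}$, i.e. $p\le p(\delta)$; this yields both halves of the statement. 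For the "if" part, given $\theta\le\tfrac{\nu^{2}}{2}$ set $\mu:=\sqrt{\nu^{2}-2\theta}\in[0,\nu]$ and $d':=2\mu+2\in[2,d]$: by \eqref{generalabscont}, the reciprocal of $\mathrm{d}P^{d}_{x}/\mathrm{d}P^{d'}_{x}\big|_{\mathcal{F}_{t}}$ equals $(x/\rho_{t})^{\nu-\mu}\exp(\theta\int_{0}^{t}\mathrm{d}s/\rho_{s}^{2})$ $P^{d}_{x}$-a.s., so $E^{d}_{x}[\exp(\theta\int_{0}^{t}\mathrm{d}s/\rho_{s}^{2})]=x^{-(\nu-\mu)}E^{d'}_{x}[\rho_{t}^{\nu-\mu}]<\infty$ by Lemma~\ref{mmtsbes}, and the $L^{p}$ martingale property follows since $(D_{t})$ is already a martingale. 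For the "only if" part — the heart of the matter — one must show $E^{d}_{x}[\exp(\theta\int_{0}^{t}\mathrm{d}s/\rho_{s}^{2})]=+\infty$ when $\theta>\tfrac{\nu^{2}}{2}$; here I would use the Lamperti representation $\rho_{s}=x\exp(W_{A_{s}}+\nu A_{s})$, note that $\{A_{t}>v\}=\{x^{2}\int_{0}^{v}\exp(2W_{w}+2\nu w)\,\mathrm{d}w<t\}$, and lower-bound this probability via the event that $W$ first descends to a level of order $-\tfrac12\log v$ on a short initial interval and then stays below it on $[\,\cdot,v]$; standard Gaussian small-deviation estimates then give $P(A_{t}>v)\ge\exp(-\tfrac{\nu^{2}}{2}v-o(v))$ as $v\to\infty$, whence $E^{d}_{x}[\exp(\theta A_{t})]\ge c\int^{\infty}\exp\big((\theta-\tfrac{\nu^{2}}{2})v-o(v)\big)\,\mathrm{d}v=+\infty$. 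The delicate point is obtaining the exponential rate \emph{exactly} $\tfrac{\nu^{2}}{2}$ while controlling the sub-exponential corrections from the initial descent of $W$; alternatively one may simply quote the classical sharp criterion for exponential moments of $\int_{0}^{t}\mathrm{d}s/\rho_{s}^{2}$ from the literature on exponential functionals of Bessel processes.
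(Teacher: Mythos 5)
Your reduction is correct and genuinely different from the paper's. The identity $E^{\delta}_{x}(D_{t}^{p})=x^{p}\,E^{2p+\delta}_{x}\bigl[\exp\bigl(\tfrac{p(p-1)}{2}\int_{0}^{t}\rho_{s}^{-2}\,ds\bigr)\bigr]$ does follow from (\ref{generalabscont}) with $\delta'=2p+\delta$ (your Girsanov/time-change derivation is just that relation in disguise), and your "if" part is complete: the further change of measure to dimension $d'=2\mu+2$ with $\mu=\sqrt{\nu^{2}-2\theta}$ absorbs the exponential exactly and leaves a finite moment of $\rho_{t}$, handled by Lemma \ref{mmtsbes}. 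The algebra $p(p-1)\le(p+\tfrac{\delta}{2}-1)^{2}\Leftrightarrow p\le p(\delta)$ checks out. The paper instead changes measure down to dimension $2$, where the exponent vanishes precisely at $p=p(\delta)$; both routes buy the same thing for the positive direction, and yours has the merit of packaging the whole computation into a single clean exponential-moment criterion.

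The gap is in the sharpness half. You need $E^{d}_{x}\bigl[\exp\bigl(\theta\int_{0}^{t}\rho_{s}^{-2}\,ds\bigr)\bigr]=+\infty$ for $\theta>\nu^{2}/2$ with $\nu>0$, and for this you offer only a sketch (a small-deviation lower bound on $P(A_{t}>v)$ whose exponential rate you yourself flag as delicate) or an appeal to the literature. As written this direction is not proved. The paper avoids the issue entirely by working in dimension $2$, where $\nu=0$ and divergence of \emph{every} positive exponential moment is trivial by Jensen's inequality ($E^{2}_{x}(\rho_{s}^{-2})=+\infty$), the polynomial weight $\rho_{t}^{\,p+\delta/2-1}$ being removed by H\"{o}lder against $E^{2}_{x}(\rho_{t}^{-1})<\infty$. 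You can close your gap with the same trick inside your own framework: for $\theta=\tfrac{\nu^{2}}{2}+\epsilon$, apply (\ref{generalabscont}) once more between dimensions $2p+\delta$ and $2$ to absorb $\tfrac{\nu^{2}}{2}A_{t}$ into the Radon--Nikodym density, leaving $x^{-\nu}E^{2}_{x}\bigl[\rho_{t}^{\nu}\exp(\epsilon A_{t})\bigr]$, and then run the Jensen--H\"{o}lder argument under $P^{2}_{x}$. Without that (or a genuine proof of the sharp criterion for $\nu>0$), the "moreover" clause of the proposition is not established.
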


\begin{rk}
We emphasize that $p$ is \textit{finite} in the above result. Indeed $D_{t}$ is never in $L^{\infty}$ even if $\delta \geq 1$; for example, when $\delta = 1$, $D_{t} = \rho_{t}  \mathbf{1}_{t < T_{0}(x)}$ which is clearly not bounded a.s. .
\end{rk} 

\begin{proof}[Proof of Prop \ref{lp}]
If $\delta \geq 1$, then, for all $t \geq 0$, $D_{t} \leq \rho_{t}$. Hence, for all $p \in (0, + \infty)$:
\[ \mathbb{E} \left( D_{t}^{p} \right) \leq E^{\delta}_{x} ({\rho_{t}}^{p}) \]
which is finite by Lemma \ref{mmtsbes}. 

On the other hand, if $\delta \in [0,1)$, then, for all $t >0$ and $p>0$, we have:
\[ \mathbb{E} \left( D_{t}^{p} \right) = E^{\delta}_{x} \left[ \mathbf{1}_{t < T_{0}} \ {\rho_{t}}^{p} \ \exp \left( - p \frac{\delta-1}{2} \int_{0}^{t} \frac{ds}{\rho_{s}^{2}} \right) \right]. \]
By the absolute continuity relation (\ref{generalabscont}) applied with $\delta':=2$, the latter equals:
\[
E^{2}_{x} \left[ x^{\frac{2-\delta}{2}} \ {\rho_{t}}^{p + \frac{\delta-2}{2} } \ \exp \left( \underbrace{\left( - p \frac{\delta-1}{2} - \frac{(\delta-2)^{2}}{8} \right)}_{\text{$:= A(p)$} }  \int_{0}^{t} \frac{ds}{\rho_{s}^{2}}\right) \right]. \]
For $p = p(\delta)$, $A(p) = 0$, so that:
\begin{align*}
\mathbb{E} \left[ D_{t}^{p(\delta)} \right] &= E^{2}_{x} \left[ x^{\frac{2-\delta}{2}} {\rho_{t}}^{p(\delta) +\frac{\delta-2}{2}} \right] \\
&= x^{1 - \frac{\delta}{2}} \ E^{2}_{x} \left[ {\rho_{t}}^{p(\delta) + \frac{\delta}{2} - 1} \right]. 
\end{align*}
Since $\frac{\delta}{2} + p(\delta) - 1 \geq 0$, by Lemma \ref{mmtsbes}, the last quantity is finite. Hence $D_{t}$ is indeed in $L^{p(\delta)}$. 

Suppose now that $p = p(\delta) + r$ for some $r >0$. We show that $D_{t} \notin L^{p}$. We have:
\begin{align*}
\mathbb{E} \left[ D_{t}^{p} \right] &= E^{2}_{x} \left[ x^{\frac{2-\delta}{2}} \ {\rho_{t}}^{p + \frac{\delta-2}{2} } \ \exp \left( \left( - p \frac{\delta-1}{2} - \frac{(\delta-2)^{2}}{8} \right)  \int_{0}^{t} \frac{ds}{{\rho_{s}}^{2}}\right) \right] \\
&= x^{1-\frac{\delta}{2}} E^{2}_{x} \left[ {\rho_{t}}^{p + \frac{\delta}{2} -1} \ \exp \left( \frac{1-\delta}{2} r \int_{0}^{t} \frac{ds}{{\rho_{s}}^{2}} \right) \right] 
\end{align*}
We claim that the last quantity is infinite. Indeed, first note that by Jensen's inequality and Fubini, for any $C> 0$ we have:
\begin{align*} 
E^{2}_{x} \left[ \exp \left( C \int_{0}^{t}  \frac{ds}{{\rho_{s}}^{2}} \right) \right] &\geq  \exp \left( C \int_{0}^{t}  E^{2}_{x} \left(\rho_{s}^{-2} \right) ds \right) 
\end{align*}
and the right-hand side is infinite since, for all $s > 0$, $E^{2}_{x} \left(\rho_{s}^{-2} \right) = + \infty$ (indeed, by formula (\ref{sgpdensity}), the transition density $p^{2}_{s}(x,y)$ does not integrate $y^{-2}$ as $y \to 0$). Therefore: 
\begin{equation} 
\label{diverging}
E^{2}_{x} \left[ \exp \left( C \int_{0}^{t}  \frac{ds}{{\rho_{s}}^{2}} \right) \right]  = + \infty 
\end{equation}
Consider now any $c >0$ and $a, b >0$ such that $\frac{1}{a} + \frac{1}{b} = 1$.  By (\ref{diverging}) and H\"{o}lder's inequality, we have:
\begin{align*} 
 + \infty &= E^{2}_{x} \left[ \exp \left( \frac{1-\delta}{2a} r \int_{0}^{t}  \frac{ds}{{\rho_{s}}^{2}} \right) \right] \\
& \leq E^{2}_{x} \left[ {\rho_{t}}^{ac} \exp \left( \frac{1-\delta}{2} r \int_{0}^{t}  \frac{ds}{{\rho_{s}}^{2}} \right) \right]^{1/a} E^{2}_{x} \left[ \rho_{t}^{-bc} \right]^{1/b}   
\end{align*}
Set $c =   \frac{\frac{\delta}{2} + p - 1}{\frac{\delta}{2} + p}$, $a = \frac{\delta}{2} + p$,  and $b = \frac{\frac{\delta}{2} + p}{\frac{\delta}{2} + p-1}$ . Remark that $\frac{\delta}{2} + p - 1>0$ since $p > p(\delta) \geq 1$, so that this choice for $c$, $a$, and $b$ makes sense. We obtain:
\[ E^{2}_{x} \left[ {\rho_{t}}^{\frac{\delta}{2} + p - 1 } \ \exp \left( \frac{1-\delta}{2} r \int_{0}^{t} \frac{ds}{{\rho_{s}}^{2}} \right) \right] ^{\frac{1}{\frac{\delta}{2} + p}} E^{2}_{x} \left[ \rho_{t}^{-1} \right]^{\frac{\frac{\delta}{2} + p - 1}{\frac{\delta}{2} + p}} = +\infty \]
By the comparison lemma \ref{weakcomp} and the expression (\ref{sgpdensity}) for the transition density of the Bessel process, we have 
\begin{align*} 
E^{2}_{x} \left[ \rho_{t}^{-1} \right] \leq E^{2}_{0} \left[ \rho_{t}^{-1} \right]  & = \int_{0}^{\infty} \frac{1}{t} \exp \left( - \frac{y^{2}}{2t} \right) dy 
\end{align*}
so that $E^{2}_{x} \left[ \rho_{t}^{-1} \right] < + \infty$. 
Therefore, we deduce that  :
\[ E^{2}_{x} \left[ {\rho_{t}}^{\frac{\delta}{2} + p - 1  } \ \exp \left( \frac{1-\delta}{2} r \int_{0}^{t} \frac{ds}{{\rho_{s}}^{2}} \right) \right]  = + \infty \]
as claimed. Hence $D_{t} \notin L^{p}$ for $p > p(\delta)$.
\end{proof}


\section{A Bismut-Elworthy-Li formula for the Bessel processes}

We are now in position to provide a probabilistic interpretation of the right-hand-side of equation (\ref{bel2}) in Theorem \ref{thmana}. 

Let $\delta > 0$, and $x >0$. As we saw in the previous section, the process $(\eta_{t}(x))_{t \geq 0}$ may blow up at time $T_{0}$, so that the stochastic integral  $\int_{0}^{t} \eta_{s} (x) dB_{s}$ is a priori ill-defined, at least for $\delta \in (0, 1)$. However, it turns out that we can define the latter process rigorously as a local martingale.

\begin{prop}
\label{stochint}
Suppose that $\delta >0$. Then the stochastic integral process $\int_{0}^{t} \eta_{s} dB_{s}$ is well-defined as a local martingale and is indistinguishable from the continuous martingale $D_{t} - x$. 
\end{prop}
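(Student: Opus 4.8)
The plan is to recognise $D_t-x$ as the stochastic integral $\int_0^t\eta_s\,dB_s$ by localising away from the origin, where $\eta$ is bounded and everything is classical, and then to use Proposition \ref{cv} to push the identification through across the boundary time $T_0(x)$. Concretely, for $n\ge 1$ set $\tau_n:=\inf\{t>0:\rho_t(x)\le 1/n\}\wedge n$. Since $\rho$ is continuous and strictly positive on $[0,T_0(x))$, one checks that $\tau_n<T_0(x)$, that $\tau_n\uparrow T_0(x)$, and that $\rho_s\ge 1/n$ for $s\le\tau_n$, so $(\eta_s)_{s\le\tau_n}$ is bounded. On $[0,\tau_n]$ the canonical process satisfies $\rho_t=x+\tfrac{\delta-1}{2}\int_0^t\rho_s^{-1}\,ds+B_t$; writing $U_t:=\tfrac{1-\delta}{2}\int_0^t\rho_s^{-2}\,ds$, which has finite variation on $[0,\tau_n]$, Itô's formula applied to $\rho_t e^{U_t}$ gives on $[0,\tau_n]$
\[
 d\bigl(\rho_t e^{U_t}\bigr)=e^{U_t}\,d\rho_t+\rho_t e^{U_t}\,dU_t=e^{U_t}\Bigl(\tfrac{\delta-1}{2}\tfrac1{\rho_t}+\tfrac{1-\delta}{2}\tfrac1{\rho_t}\Bigr)dt+e^{U_t}\,dB_t=e^{U_t}\,dB_t,
\]
the bounded-variation terms cancelling. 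Since $e^{U_t}=\eta_t$ for $t<T_0(x)$ and $\rho_{t\wedge\tau_n}e^{U_{t\wedge\tau_n}}=D_{t\wedge\tau_n}$, this reads $D_{t\wedge\tau_n}=x+\int_0^{t\wedge\tau_n}\eta_s\,dB_s$ for all $t\ge 0$, the integrand here being bounded so that the integral is unambiguous.

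Next I would show that $\int_0^t\eta_s^2\,ds<\infty$ a.s. for every $t\ge0$, which is exactly what makes $\int_0^t\eta_s\,dB_s$ well-defined as a continuous local martingale. For $t<T_0(x)$ this is clear since $\eta$ is locally bounded there, and since $\eta\equiv0$ on $[T_0(x),\infty)$ it remains to bound $\int_0^{T_0(x)}\eta_s^2\,ds$. If $\delta\ge1$ then $\eta\le1$ on $[0,T_0(x))$, so this integral is at most $T_0(x)<\infty$. If $\delta\in(0,1)$ I would reuse the time change from the proof of Proposition \ref{cv}: with $A_t=\int_0^t\rho_s^{-2}\,ds$, $C:=A^{-1}$, and $\beta_u:=\int_0^{C_u}\rho_r^{-1}\,dB_r$ a Brownian motion, one has $\log(\rho_t/x)=\beta_{A_t}-\tfrac{2-\delta}{2}A_t$ for $t<T_0(x)$, so the substitution $u=A_t$ (hence $dt=\rho_{C_u}^2\,du$ and $\rho_{C_u}^2=x^2e^{2\beta_u-(2-\delta)u}$) gives
\[
 \int_0^{T_0(x)}\eta_s^2\,ds=\int_0^\infty e^{(1-\delta)u}\,\rho_{C_u}^2\,du=x^2\int_0^\infty e^{2\beta_u-u}\,du<\infty\quad\text{a.s.},
\]
since $\beta_u-u/2\to-\infty$ a.s. (Alternatively, and without any computation: $(D_t-x)$ is a continuous local martingale on the stochastic interval $[0,T_0(x))$ with bracket $\int_0^t\eta_s^2\,ds$; by Proposition \ref{cv} it converges to the finite limit $-x$ as $t\uparrow T_0(x)$, and by the Dambis--Dubins--Schwarz representation a convergent continuous local martingale cannot have infinite bracket, whence $\int_0^{T_0(x)}\eta_s^2\,ds<\infty$ a.s.)

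Finally I would identify the integral: setting $N_t:=\int_0^t\eta_s\,dB_s$, the substitution property of the Itô integral gives $N_{t\wedge\tau_n}=\int_0^{t\wedge\tau_n}\eta_s\,dB_s=D_{t\wedge\tau_n}-x$ for every $n$, and letting $n\to\infty$ (using $\tau_n\uparrow T_0(x)$) yields $N_t=D_t-x$ for all $t<T_0(x)$; since both processes are continuous, $D_t\to 0=D_{T_0(x)}$ as $t\uparrow T_0(x)$ by Proposition \ref{cv}, and for $t>T_0(x)$ one has $N_t=N_{T_0(x)}$ and $D_t=0$ because $\eta\equiv0$ there, it follows that $N_t=D_t-x$ for all $t\ge0$, a.s., hence the two continuous processes are indistinguishable. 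As $(D_t)_{t\ge0}$ is a genuine continuous $(\mathcal F_t)$-martingale (Section \ref{eta}), so is $(N_t)_{t\ge0}=(D_t-x)_{t\ge0}$, which proves the proposition. The main obstacle is the second step: for $\delta<1$ the integrand $\eta$ blows up as $t\uparrow T_0(x)$, and what rescues square-integrability is precisely the cancellation $\rho_t e^{U_t}\to0$ of Proposition \ref{cv} — equivalently, the a.s.\ finiteness of the exponential functional $\int_0^\infty e^{2\beta_u-u}\,du$ of Brownian motion with negative drift — and this is also the only place the hypothesis $\delta>0$ enters.
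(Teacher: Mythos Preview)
Your proof is correct and takes a genuinely different route from the paper's. The paper localises exactly as you do (with $T_\epsilon$ in place of your $\tau_n$) and obtains the same identity $D_{t\wedge T_\epsilon}-x=\int_0^{t\wedge T_\epsilon}\eta_s\,dB_s$, but then, to justify that $\int_0^t\eta_s^2\,ds<\infty$ and to pass to the limit, it invokes the $L^p$ machinery of Proposition~\ref{lp}: the BDG inequality together with Doob's maximal inequality for $p\in(1,p(\delta)]$ transfer the $L^p$ bound on $D_t$ into a moment bound $\mathbb{E}\bigl[(\int_0^t\eta_s^2\,ds)^{p/2}\bigr]<\infty$, and the limit $\epsilon\to0$ is taken in $L^p$. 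Your argument is purely pathwise: you deduce a.s.\ finiteness of the bracket either from the explicit time change (reducing to the exponential functional $\int_0^\infty e^{2\beta_u-u}\,du$) or, more conceptually, from Dambis--Dubins--Schwarz combined with the a.s.\ convergence $D_t\to0$ of Proposition~\ref{cv}. This is more elementary --- it bypasses Proposition~\ref{lp} entirely --- at the cost of not producing the $L^p$ estimate on $\int_0^t\eta_s^2\,ds$ that the paper later exploits for the refined strong Feller bound~\eqref{betterbound}. One remark on your closing sentence: your explicit computation gives $\int_0^{T_0}\eta_s^2\,ds=x^2\int_0^\infty e^{2\beta_u-u}\,du$ \emph{independently of $\delta$}, and your DDS alternative rests only on Proposition~\ref{cv}, which is proved for all $\delta\in[0,2)$. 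So, contrary to what you write, your argument does not visibly use $\delta>0$; this is noteworthy in light of the paper's comment (in the Introduction) that for $\delta=0$ even the well-definedness of $\int_0^t\eta_s\,dB_s$ as a local martingale is left open, precisely because the paper's $L^p$ route breaks down at $p(\delta)=1$.
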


\begin{proof}
We first treat the case $\delta \geq 2$, which is much easier to handle. In that case, $\eta_{t} \in [0,1]$ for all $t \geq 0$,  so that the stochastic integral $\int_{0}^{t} \eta_{s} dB_{s}$ is clearly well-defined as an $L^{2}$ martingale. Moreover, since $T_{0} = + \infty$ a.s.,  by It\^{o}'s lemma we have:
\begin{align*}
D_{t} = \rho_{t} \eta_{t} &= x +  \int_{0}^{t} \eta_{s} \ d \rho_{s} +  \int_{0}^{t} \rho_{s} \ d \eta_{s} \\
&= x + \int_{0}^{t} \eta_{s} \left( \frac{\delta-1}{2} \frac{ds}{\rho_{s}} + dB_{s} \right) -  \int_{0}^{t} \rho_{s} \frac{\delta-1}{2} \frac{\eta_{s}}{{\rho_{s}}^{2}} ds \\
&= x + \int_{0}^{t} \eta_{s} dB_{s}
\end{align*} 
so the claim follows. 

Now suppose that $\delta \in (0,2)$ and fix an $\epsilon > 0$. Recall that $T_{\epsilon}(x) := \inf \{ t \geq 0,  \rho_{t}(x) \leq \epsilon \}$ and note that, since $T_{\epsilon} < T_{0}$, the stopped process $\eta^{T_{\epsilon}}$ is continuous on $\mathbb{R}_{+}$, so that the stochastic integral $\int_{0}^{t \wedge T_{\epsilon}(x)} \eta_{s}(x) dB_{s}$ is well-defined as a local martingale. Using as above It\^{o}'s lemma, but this time with the stopped processes $\rho^{T_{\epsilon}}$ and $\eta^{T_{\epsilon}}$, we have:
\begin{equation}
\label{approximate_eq}
 \int_{0}^{t \wedge T_{\epsilon}} \eta_{s} dB_{s} =  D_{t \wedge T_{\epsilon}} - x .
 \end{equation}
Our aim would be to pass to the limit $\epsilon \to 0$ in this equality. By continuity of $D$, as $\epsilon \to 0$, $D_{t \wedge T_{\epsilon}}$ converges to $D_{t \wedge T_{0}}$ = $D_{t}$ almost-surely. So the right-hand side of (\ref{approximate_eq}) converges to $D_{t} - x$ almost-surely. 

The convergence of the left-hand side to a stochastic integral is more involved, since we first have to prove that the stochastic integral $\int_{0}^{t} \eta_{s} dB_{s}$ is indeed well-defined as a local martingale. For this, it suffices to prove that, almost-surely:
\[ \forall t \geq 0, \qquad \int_{0}^{t} \eta_{s}^{2} \ ds < \infty .\]
We actually prove the following stronger fact. For all $t \geq 0$ 
\begin{equation}
\label{bracket}
 \mathbb{E} \left[ \left( \int_{0}^{t} \eta_{s}^{2} ds \right)^{p/2} \right] < \infty 
\end{equation}
for all finite positive number $p$ such that $p \in (1,p(\delta)]$.   Indeed, applying successively the Burkholder-Davis-Gundy (BDG) inequality and Doob's inequality to the martingale $\int_{0}^{T_{\epsilon} \wedge \cdot} \eta_{s} dB_{s}$ , we have:
\begin{align*}
 \mathbb{E} \left[ \left( \int_{0}^{t \wedge T_{\epsilon}} \eta_{s}^{2} ds \right)^{p/2} \right] & \leq C_{p}  \ \mathbb{E} \left[ \underset{s \leq t \wedge T_{\epsilon}}{\sup} \left| \int_{0}^{s} \eta_{u} dB_{u} \right|^{p} \right] \\
& = C_{p} \ \mathbb{E} \left[ \underset{s \leq t \wedge T_{\epsilon}}{\sup} |D_{s}-x|^{p} \right] \\
& \leq C_{p} \left( \frac{p}{p-1} \right)^{p}  \mathbb{E} \left[ | D_{t \wedge T_{\epsilon}}-x |^{p} \right] \\
\end{align*}
where $C_{p}$ is a constant depending only on $p$. Now, since $(D_{t} - x)_{t \geq 0}$ is a  continuous martingale, by the optional stopping theorem and Jensen's inequality, we have:
\[ \mathbb{E} \left[ | D_{t \wedge T_{\epsilon}}-x |^{p} \right] \leq \mathbb{E} (|D_{t}-x| ^{p}) \] 
and the right-hand side is finite because $D_{t}$ is in $L^{p}$ . Hence, letting $\epsilon \to 0$ in the above, by the monotone convergence theorem we deduce that :
 \[  \mathbb{E} \left[ \left( \int_{0}^{t \wedge T_{0}} \eta_{s}^{2} ds \right)^{p/2} \right] < \infty \]
 But since $\eta_{t} = 0$ for all $t \geq T_{0}$, this implies the bound (\ref{bracket}), and hence the stochastic integral  $\int_{0}^{t} \eta_{s} dB_{s}$ is well-defined as a local martingale. Moreover, for all $t \geq 0$, by the BDG inequality, we have:
 \begin{align*}
\mathbb{E} \left[ \left( \int_{0}^{t} \eta_{s} dB_{s} - \int_{0}^{t \wedge T_{\epsilon}} \eta_{s} dB_{s} \right) ^{p} \right] &=  \mathbb{E} \left[ \left( \int_{t \wedge T_{\epsilon} }^{t \wedge T_{0}} \eta_{s} dB_{s}\right) ^{p} \right] \\
 & \leq c_{p} \ \mathbb{E} \left[ \left( \int_{t \wedge T_{\epsilon} }^{t \wedge T_{0}} \eta_{s}^{2} ds \right) ^{p/2} \right]
 \end{align*} 
where $c_{p}$ is some constant depending only on $p$. Now, by the dominated convergence theorem, the last quantity above goes to $0$ as $\epsilon \to 0$ , and hence:
\[  \int_{0}^{t \wedge T_{\epsilon}} \eta_{s} dB_{s} \underset{ \epsilon \to 0}{\longrightarrow}   \int_{0}^{t} \eta_{s} dB_{s} \]
in $L^{p}$. Hence, the left-hand side of equality (\ref{approximate_eq}) converges in $L^{p}$ to the stochastic integral $\int_{0}^{t} \eta_{s} dB_{s}$. Letting $\epsilon \to 0$ in that equality, we thus obtain :
\[ \int_{0}^{t} \eta_{s} dB_{s} = D_{t} - x \]
as claimed.
\end{proof}

Using the above proposition, Theorem \ref{thmana} can now be interpreted probabilistically as a Bismut-Elworthy-Li formula. 

\begin{thm} [Bismut-Elworthy-Li formula]
\label{thmprobinterp}
Let $\delta > 0$. Then, for all $T>0$, and all $F : \mathbb{R}_{+} \to \mathbb{R}$ bounded and Borel, the function $x \to P^{\delta}_{t} F (x)$ is differentiable on $\mathbb{R}_{+}$, and for all $x > 0$:
\begin{equation}
\label{bel3}
\frac{d}{dx}  P^{\delta}_{T} F (x) = \frac{1}{T} \ \mathbb{E} \left[ F(\rho_{t}(x)) \left( \int_{0}^{T} \eta_{s}(x) dB_{s} \right) \right].
\end{equation}
\end{thm}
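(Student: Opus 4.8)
The strategy is to avoid attacking \eqref{bel3} head-on (for instance by approximating $F$ by smooth functions and invoking the differentiability of the flow from Proposition \ref{derflow}), since for small $\delta$ the process $\eta$ may blow up at $T_0$ and the integrability of $F'(\rho_T)\,\eta_T$ is delicate. Instead, I would simply check that the right-hand side of the already-proved analytic identity \eqref{bel2} in Theorem \ref{thmana} coincides with the right-hand side of \eqref{bel3}. The two ingredients are Proposition \ref{stochint}, which identifies the stochastic integral $\int_0^T\eta_s(x)\,dB_s$ with $D_T-x$, where $D_t=\rho_t(x)\eta_t(x)$ is a continuous martingale, and Proposition \ref{abscont}, which identifies $\tfrac1x D_t$ with the Radon--Nikodym derivative $\tfrac{dP^{\delta+2}_x}{dP^{\delta}_x}\big|_{\mathcal F_t}$.

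Concretely, the plan is as follows. First I would observe that, since $\delta>0$, Proposition \ref{lp} gives $D_T\in L^{p}$ for some $p>1$ (indeed $p(\delta)>1$ precisely when $\delta>0$), so $D_T\in L^1$; as $F$ is bounded, $F(\rho_T(x))\,(D_T-x)$ is integrable and the right-hand side of \eqref{bel3} is a genuine expectation. Using $\int_0^T\eta_s\,dB_s=D_T-x$ from Proposition \ref{stochint}, I would write
\[
\frac1T\,\mathbb E\!\left[F(\rho_T(x))\int_0^T\eta_s(x)\,dB_s\right]
=\frac1T\Big(\mathbb E\big[F(\rho_T(x))\,D_T\big]-x\,\mathbb E\big[F(\rho_T(x))\big]\Big),
\]
where the last term is $\tfrac{x}{T}\,P^{\delta}_T F(x)$ by definition of $P^\delta_T$. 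For the first term, since $F(\rho_T)$ is bounded and $\mathcal F_T$-measurable, Proposition \ref{abscont} gives
\[
\mathbb E\big[F(\rho_T(x))\,D_T\big]
=x\,E^{\delta}_x\!\left[F(\rho_T)\,\frac{dP^{\delta+2}_x}{dP^{\delta}_x}\Big|_{\mathcal F_T}\right]
=x\,E^{\delta+2}_x\!\big[F(\rho_T)\big]
=x\,P^{\delta+2}_T F(x).
\]
Combining the two displays yields $\tfrac1T\,\mathbb E[F(\rho_T(x))\int_0^T\eta_s\,dB_s]=\tfrac{x}{T}\big(P^{\delta+2}_T F(x)-P^{\delta}_T F(x)\big)$, which is exactly $\tfrac{d}{dx}P^{\delta}_T F(x)$ by Theorem \ref{thmana}; differentiability of $x\mapsto P^{\delta}_T F(x)$ on $\mathbb R_+$ is already contained there.

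In this reduction essentially nothing is left to do: all the difficulty has been pushed into the earlier results — the explicit computation behind Theorem \ref{thmana}, the decay $D_t\to 0$ at $T_0$ (Proposition \ref{cv}), the sharp $L^p$ bounds on $D$ (Proposition \ref{lp}), and the identification $\int_0^t\eta_s\,dB_s=D_t-x$ (Proposition \ref{stochint}). The only point requiring care is the integrability that makes the right-hand side of \eqref{bel3} a bona fide expectation rather than a formal symbol, and this is where $\delta>0$ enters: for $\delta=0$ the stochastic integral $\int_0^t\eta_s\,dB_s$ is not even known to be well-defined as a local martingale, which is why the statement is restricted to $\delta>0$.
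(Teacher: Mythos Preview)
Your proposal is correct and follows essentially the same route as the paper: start from the analytic identity of Theorem~\ref{thmana}, use Proposition~\ref{abscont} to rewrite $P^{\delta+2}_T F(x)-P^{\delta}_T F(x)$ as $E^{\delta}_x[F(\rho_T)(D_T/x-1)]$, and then invoke Proposition~\ref{stochint} to replace $D_T-x$ by $\int_0^T\eta_s\,dB_s$. The only cosmetic difference is that you run the chain of equalities in the reverse order and add an explicit (but inessential) remark on integrability via Proposition~\ref{lp}; the paper simply relies on $D_T/x$ being a Radon--Nikodym derivative, hence in $L^1$.
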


\begin{proof}
By Theorem \ref{thmana}, the differentiablity property holds, and we have:
\[ \frac{d}{dx}  P^{\delta}_{T} F (x) = \frac{x}{T} \left[ P^{\delta +2}_{T} F (x) - P^{\delta}_{T} F (x) \right]. \]
Moreover, by Proposition \ref{abscont}, for all $x > 0$:
\begin{align*}
P^{\delta +2}_{T} F (x) - P^{\delta}_{T} F (x) = E^{\delta}_{x} \left[ F(\rho_{T}) \left( \frac{D_{T}}{x} - 1 \right) \right]   
\end{align*}
and, by Proposition \ref{stochint}, we have:
\[ E^{\delta}_{x} \left[ F(\rho_{T}) \left( \frac{D_{T}}{x} - 1 \right) \right]   
 = \frac{1}{x}  \mathbb{E} \left[ F(\rho_{T}(x))  \left( \int_{0}^{T} \eta_{s}(x) dB_{s}  \right) \right] \]
so equality (\ref{bel3}) follows. 
\end{proof}

Using the Bismut-Elworthy-Li formula, we are now able to sharpen the Strong Feller estimate obtained in equation (\ref{feller}) above.  

\begin{cor}
Let $T > 0$ and $\delta \geq 2(\sqrt{2} -1)$. Then, for all $R > 0$, there exists a constant $C>0$ such that, for all $x, y \in [0,R]$ and  $F: \mathbb{R}_{+} \to \mathbb{R}$ bounded and Borel, we have:
\begin{equation}
\label{betterbound}
 | P^{\delta}_{T} F (x) - P^{\delta}_{T} F (y)| \leq \frac{ C ||F||_{\infty}}{T^{\alpha(\delta)}} \ |y-x| 
\end{equation}
where the exponent $\alpha(\delta) \in [\frac{1}{2},1)$ is given by:
\begin{align*}
\label{alphadelta} 
\alpha(\delta) :=  \begin{cases} 
                     \frac{1}{2} + \frac{1-\delta}{2-\delta} \ &\text{if} \ \delta \in [2(\sqrt{2}-1), 1], \\
                     1/2 \ &\text{if} \ \delta \geq 1. \\
                     \end{cases} 
\end{align*}
\end{cor}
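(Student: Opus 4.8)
The plan is to use the Bismut-Elworthy-Li formula of Theorem~\ref{thmprobinterp} together with the moment estimates for $(D_{t})_{t \geq 0}$ obtained in Proposition~\ref{lp}. First, by approximation it suffices to prove \eqref{betterbound} for $F \in C_{b}(\mathbb{R}_{+})$, since both sides depend on $F$ only through $\|F\|_{\infty}$ in the Borel case (via a routine bounded-pointwise-convergence argument as in the proof of Corollary~\ref{dissip_strong_feller}). For such $F$ and $x > 0$, formula \eqref{bel3} together with Proposition~\ref{stochint} gives
\begin{equation*}
\left| \frac{d}{dx} P^{\delta}_{T} F(x) \right| = \frac{1}{T} \left| \mathbb{E}\left[ F(\rho_{T}(x))\, (D_{T}(x) - x) \right] \right| \leq \frac{\|F\|_{\infty}}{T}\, \mathbb{E}\left[ |D_{T}(x) - x| \right],
\end{equation*}
so the task reduces to bounding $\mathbb{E}[|D_{T}(x) - x|]$, uniformly for $x \in [0,R]$, by a constant times $T^{1-\alpha(\delta)}$.

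Next I would estimate this expectation. For $\delta \geq 1$ one can use $D_{T} \leq \rho_{T}$ and It\^{o}'s isometry on the stopped martingales: since $p(\delta) = +\infty$, the martingale $(D_{t}-x)$ is in $L^{2}$, and $\mathbb{E}[(D_{T}-x)^{2}] = \mathbb{E}[\int_{0}^{T}\eta_{s}^{2}\,ds]$. Because $\eta_{s} \leq 1$ when $\delta \geq 1$ (recall $\eta_{s} = \mathbf{1}_{s < T_{0}}\exp(\frac{1-\delta}{2}\int_{0}^{s}\rho_{u}^{-2}\,du) \leq 1$), this bracket is at most $T$, giving $\mathbb{E}[|D_{T}-x|] \leq \sqrt{T}$ and hence $\alpha(\delta) = 1/2$. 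For $\delta \in [2(\sqrt{2}-1),1)$ the martingale $(D_{t}-x)$ is no longer in $L^{2}$ in general, so instead I would pick an exponent $p = p(\delta) = \frac{(2-\delta)^{2}}{4(1-\delta)} \in [2, +\infty)$ (the hypothesis $\delta \geq 2(\sqrt{2}-1)$ is precisely what ensures $p(\delta) \geq 2$, equivalently $\frac{1-\delta}{2-\delta} \leq \frac{1}{2}$, so that $\alpha(\delta) < 1$) and estimate $\mathbb{E}[|D_{T}-x|] \leq (\mathbb{E}[|D_{T}-x|^{p}])^{1/p}$ by Jensen. By the BDG inequality this is controlled by $(\mathbb{E}[(\int_{0}^{T}\eta_{s}^{2}\,ds)^{p/2}])^{1/p}$, which I then need to bound by a constant times $T^{1/p}$ — equivalently, by the scaling $\alpha(\delta) = 1 - 1/p = 1 - \frac{2(1-\delta)}{(2-\delta)^{2}}$, one checks this equals $\frac{1}{2} + \frac{1-\delta}{2-\delta}$.

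To get the correct power of $T$, I would exploit the Brownian scaling of the Bessel process: $(\rho_{s}^{\delta}(x))_{s \geq 0}$ has the same law as $(\sqrt{T}\,\rho^{\delta}_{s/T}(x/\sqrt{T}))_{s \geq 0}$, so $\int_{0}^{T}\eta_{s}(x)^{2}\,ds \overset{(d)}{=} T\int_{0}^{1}\eta_{s}(x/\sqrt{T})^{2}\,ds$; combining with the uniform-in-$x$ moment bounds on $D_{1}$ coming from the representation used in the proof of Proposition~\ref{lp} (via the absolute continuity \eqref{generalabscont} with $\delta' = 2$ and Lemma~\ref{mmtsbes}, where the relevant expectations are increasing in $x$ by Lemma~\ref{weakcomp} and hence bounded on $[0,R]$) yields $\mathbb{E}[(\int_{0}^{T}\eta_{s}(x)^{2}\,ds)^{p/2}] \leq C\, T^{p/2}$ for $\delta \geq 1$ and a more delicate $\leq C\, T\cdot(\text{something})$ for $\delta < 1$ — here the sharp $p = p(\delta)$ case of Proposition~\ref{lp} is essential because for $p < p(\delta)$ the exponential factor is integrable but one loses on the power of $T$, whereas at $p = p(\delta)$ the exponential disappears entirely after the change of measure. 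Finally, integrating $|\frac{d}{dx}P^{\delta}_{T}F| \leq C\|F\|_{\infty} T^{-\alpha(\delta)}$ over $[x,y] \subset [0,R]$ gives \eqref{betterbound}, and the case $x=0$ follows by continuity (using the Neumann condition from Theorem~\ref{thmana}).

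The main obstacle will be making the $T$-dependence of $\mathbb{E}[(\int_{0}^{T}\eta_{s}^{2}\,ds)^{p/2}]$ precise and uniform in $x \in [0,R]$ in the regime $\delta < 1$: one must combine the scaling argument with the exact saturation of the moment bound at $p = p(\delta)$, and track how the constant depends on $x/\sqrt{T}$ — which can be large when $T$ is small — rather than on $x$ itself. Ensuring that this dependence stays bounded (using monotonicity in the initial condition, Lemma~\ref{weakcomp}, together with the fact that after the Girsanov change of measure the relevant quantity is a moment of $\rho_{1}$ under $P^{2}$, which grows only polynomially) is the crux of the argument.
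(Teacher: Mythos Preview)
Your overall strategy matches the paper's: start from the Bismut--Elworthy--Li formula, bound $\mathbb{E}\bigl[\bigl|\int_0^T\eta_s\,dB_s\bigr|\bigr]$, use Jensen to pass to the $p$-th moment with $p=p(\delta)$, apply BDG, and exploit the change of measure \eqref{generalabscont} with $\delta'=2$ so that at $p=p(\delta)$ the exponential factor disappears. For $\delta\geq 1$ your argument is exactly the paper's.

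For $\delta\in[2(\sqrt 2-1),1)$, however, there is a concrete error in your bookkeeping of the $T$-exponent. You claim that $(\mathbb{E}[(\int_0^T\eta_s^2\,ds)^{p/2}])^{1/p}$ should be bounded by $C\,T^{1/p}$ and hence $\alpha(\delta)=1-1/p(\delta)$; but $1-1/p(\delta)=\delta^2/(2-\delta)^2$, which does \emph{not} equal $\tfrac12+\tfrac{1-\delta}{2-\delta}$ (check $\delta=1$). The scaling you invoke gives $\int_0^T\eta_s(x)^2\,ds\overset{(d)}{=}T\int_0^1\eta_v(x/\sqrt T)^2\,dv$, so the missing ingredient is precisely the growth of $\mathbb{E}[(\int_0^1\eta_v(y)^2\,dv)^{p/2}]$ as $y=x/\sqrt T\to\infty$; this growth is \emph{not} bounded, and it is what produces the extra $T^{-(1-\delta)/(2-\delta)}$ factor on top of $T^{-1/2}$. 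Your final paragraph hints at this (``grows only polynomially'') but contradicts the earlier $T^{1/p}$ target.

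The paper sidesteps the scaling detour entirely. After BDG it applies a \emph{second} Jensen inequality (using $p/2\geq 1$) to get
\[
\mathbb{E}\Bigl[\Bigl(\int_0^T\eta_s^2\,ds\Bigr)^{p/2}\Bigr]\leq T^{p/2-1}\int_0^T E^\delta_x(\eta_s^p)\,ds,
\]
then changes measure on $\eta_s^p$ (not on $D_s^p$): at $p=p(\delta)$ one obtains $E^\delta_x(\eta_s^p)=E^2_x\bigl[(\rho_s/x)^{(\delta-2)/2}\bigr]$. Since the exponent $(\delta-2)/2$ is \emph{negative}, monotonicity (Lemma~\ref{weakcomp}) is used in the opposite direction from what you propose---one bounds by the initial point $0$, not by $R$---yielding $E^2_x[\rho_s^{(\delta-2)/2}]\leq E^2_0[\rho_s^{(\delta-2)/2}]=c\,s^{(\delta-2)/4}$. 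Integrating over $[0,T]$ gives the factor $T^{(\delta+2)/4}$, and altogether
\[
\mathbb{E}\Bigl[\Bigl|\int_0^T\eta_s\,dB_s\Bigr|\Bigr]\leq K^{1/p}\,x^{\frac{2(1-\delta)}{2-\delta}}\,T^{\frac12-\frac{1-\delta}{2-\delta}},
\]
so that $\alpha(\delta)=\tfrac12+\tfrac{1-\delta}{2-\delta}$, with the $x$-power bounded on $[0,R]$.
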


\begin{proof}
Let $x > 0$. By Theorem \ref{thmprobinterp}, we have :
\[ \frac{d}{dx}  P^{\delta}_{T} F (x) = \frac{1}{T} \ \mathbb{E} \left[ F(\rho_{t}(x)) \left( \int_{0}^{T} \eta_{s}(x) dB_{s} \right) \right]. \]
so that:
\[ \left| \frac{d}{dx}  P^{\delta}_{T} F (x) \right| \leq \frac{||F||_{\infty}}{T} \ \mathbb{E} \left[\left| \int_{0}^{T} \eta_{s}(x) dB_{s} \right|\right]. \]
We now bound the quantity $ \mathbb{E} \left[\left| \int_{0}^{T} \eta_{s}(x) dB_{s} \right|\right]$. If $\delta \geq 1$, then the process $(\eta_{s}(x))_{s \geq 0}$ takes values in $[0,1]$, so that, using the Cauchy-Schwarz inequality and It\^{o}'s isometry formula, we have :
\[ \mathbb{E} \left[  \left| \int_{0}^{T} \eta_{s}(x) dB_{s} \right| \right] \leq \sqrt{ \mathbb{E} \left( \int_{0}^{T} \eta_{s}(x)^{2} ds \right)} \leq \sqrt{T}. \]
Therefore:
\[ \left| \frac{d}{dx}  P^{\delta}_{T} F (x) \right| \leq \frac{||F||_{\infty}}{\sqrt{T}} \] 
and the claim follows with $C =1$. 

Suppose now that $\delta \in [2(\sqrt{2}-1), 1)$, and let $p := p(\delta)$ as in (\ref{pdelta}).  Note that $p \geq 2$ by the assumption on $\delta$. By Jensen's inequality: 
\[ \mathbb{E} \left[  \left| \int_{0}^{T} \eta_{s}(x) dB_{s} \right| \right] \leq \left( \mathbb{E} \left| \int_{0}^{T} \eta_{s}(x) dB_{s} \right|^{p} \right)^{1/p} \]
Now, applying successively the BDG inequality, Jensen's inequality and the absolute continuity  relation (\ref{generalabscont}) between $P^{2}_{x}$  and $P^{\delta}_{x}$, we have, for some constant $c_{p}$ depending only on $p$:
\begin{align*}
\mathbb{E} \left[  \left| \int_{0}^{T} \eta_{s}(x) dB_{s} \right|^{p} \right] & \leq c_{p} \ \mathbb{E} \left[\left( \int_{0}^{T} \eta_{s}(x)^{2} ds \right)^{p/2}\right] \\
& \leq c_{p} \ T^{p/2-1} \ \mathbb{E} \left( \int_{0}^{T} \eta_{s}(x)^{p} \ ds \right) \\ 
& \leq c_{p} \ T^{p/2-1} \int_{0}^{T} E^{\delta}_{x} (\eta_{s}^{p}) \ ds \\
& = c_{p} \ T^{p/2-1} \int_{0}^{T} E^{2}_{x} \left[  \left( \frac{\rho_{s}}{x} \right)^\frac{\delta-2}{2} \exp \left( \left(\frac{1- \delta}{2}p - \frac{(2-\delta)^{2}}{8} \right) \int_{0}^{s} \frac{du}{\rho_{u}^{2}}  \right) \right]ds \\
& = c_{p} \ T^{p/2-1} \int_{0}^{T} E^{2}_{x} \left[ \left( \frac{\rho_{s}}{x} \right)^\frac{\delta-2}{2}  \right]ds
\end{align*}
where the last equality follows from the fact that $\frac{1- \delta}{2}p - \frac{(2-\delta)^{2}}{8} = 0$ for $p=p(\delta)$. Now, since $\frac{\delta-2}{2} \leq 0$, by the comparison lemma \ref{weakcomp}, as well as the scaling property of the Bessel processes (see, e.g., Remark 3.7 in \cite{zambotti2017random}), for all $s \in [0,T]$, we have:
\[ E^{2}_{x} \left[ \rho_{s}^{\frac{\delta-2}{2} } \right] \leq  E^{2}_{0} \left[ \rho_{s}^\frac{\delta-2}{2} \right] = s^{\frac{\delta-2}{4}} E^{2}_{0} \left[ \rho_{1}^\frac{\delta-2}{2} \right] .\]
Let $c := E^{2}_{0} \left[ \rho_{1}^\frac{\delta-2}{2} \right]$. Using formula (\ref{sgpdensity}), we have:
\[ c = \int_{0}^{\infty} y^{\delta/2} \exp \left( -\frac{y^{2}}{2} \right) \ dy < \infty . \]
Hence:
\begin{align*}
 \int_{0}^{T} E^{2}_{x} \left[ \left( \frac{\rho_{s}}{x} \right)^{\frac{\delta}{2} -1}  \right]ds &\leq c \ x^{1- \frac{\delta}{2}}  \int_{0}^{T} s^{\frac{\delta-2}{4}} ds \\
&\leq \frac{4 \ c}{\delta+2} \ x^{1- \frac{\delta}{2}} \ T^{\frac{\delta+2}{4}}.
\end{align*}
Therefore, we obtain:
\begin{align*}
\mathbb{E} \left[  \left| \int_{0}^{T} \eta_{s}(x) dB_{s} \right|^{p} \right] & \leq K \ x^{1- \frac{\delta}{2}} \ T^{\frac{p}{2} -1} \ T^{\frac{\delta+2}{4}} \\
& \leq  K \ x^{1- \frac{\delta}{2}} \ T^{\frac{p}{2} +\frac{\delta-2}{4}} \\
\end{align*}
where $K$ is a constant depending only on $\delta$. Hence
\[ \mathbb{E} \left[  \left| \int_{0}^{T} \eta_{s}(x) dB_{s} \right| \right] \leq K^{1/p} \ x^{ \frac{1}{p}(1- \frac{\delta}{2})} \ T^{\frac{1}{2} + \frac{\delta-2}{4p}} .\]
Note that, since $p=p(\delta)$, we have $\frac{1}{p}(1- \frac{\delta}{2}) = \frac{2(1-\delta)}{2 - \delta}$, and $\frac{\delta-2}{4p} = - \frac{1-\delta}{2-\delta}$. Therefore, we obtain: 
\[ \left| \frac{d}{dx}  P^{\delta}_{T} F (x) \right| \leq K^{1/p} \ x^{ 2\frac{1-\delta}{2 - \delta}} \ ||F||_{\infty}  \ T^{-\frac{1}{2} - \frac{1-\delta}{2-\delta}} .\]
Therefore, given $R>0$, one has for all $x \in [0,R]$:
\[ \left| \frac{d}{dx}  P^{\delta}_{T} F (x) \right| \leq C \frac{||F||_{\infty}}{T^{\alpha(\delta)}} \]
with $C:=  K^{1/p} \ R^{2\frac{1-\delta}{2 - \delta}}$. This yields the claim.

\end{proof}
 
\begin{rk}
\label{open_pbm}
In the above proposition, the value $2(\sqrt{2} -1) $ that appears is the smallest value of $\delta$ for which $\eta$ is in $L^{2}$. For $\delta < 2(\sqrt{2} -1)$, $\eta$ is no longer in $L^{2}$ but only in $L^{p}$ for $p = p(\delta) < 2$, so that we cannot apply Jensen's inequality to bound the quantity $\mathbb{E} \left[  \left( \int_{0}^{T} \eta_{s}(x)^{2} ds \right)^{p/2} \right]$ anymore. It seems reasonable to expect that the bound (\ref{betterbound}) holds also for $\delta < 2(\sqrt{2} -1)$, although we do not have a proof of this fact.
\end{rk}  

\section{Appendix}
In this Appendix, we prove Proposition \ref{derflow}. Recall that we still denote by $(\rho_{t}(x))_{t, x \geq 0}$ the process $(\tilde{\rho}^{\delta}_{t}(x))_{t,x \geq 0}$ constructed in Proposition \ref{modif}.
 
\begin{lm}
\label{bicont}
For all rational numbers $\epsilon, \gamma > 0$, let:
\[ \mathcal{U}_{\gamma}^{\epsilon} := [0, T_{\epsilon}(\gamma)) \times (\gamma, +\infty) \]
and set:
\[ \mathcal{U} := \underset{\epsilon, \gamma \in \mathbb{Q}^{*}_{+}}{ \bigcup}  \mathcal{U}_{\gamma}^{\epsilon}. \]
 Then, a.s., the function $(t,x) \mapsto {\rho}_{t}(x)$ is continuous on the open set $\mathcal{U}$.
\end{lm}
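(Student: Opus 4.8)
The plan is to reduce everything to a single cell $\mathcal{U}_\gamma^\epsilon$ and there exploit that the Bessel process stays bounded away from $0$, where its drift is Lipschitz. Since $\mathcal{U}$ is the countable union of the sets $\mathcal{U}_\gamma^\epsilon$, $\epsilon,\gamma\in\mathbb{Q}^{*}_{+}$, and since for fixed $\omega$ each $\mathcal{U}_\gamma^\epsilon$ is an open subset of the domain $\mathbb{R}_+\times\mathbb{R}_+$ while continuity is a local property, it suffices to show that for every fixed pair of rationals $\epsilon,\gamma>0$ one has, almost surely, continuity of $(t,x)\mapsto\rho_t(x)$ on $\mathcal{U}_\gamma^\epsilon$; intersecting the countably many almost sure events then yields the statement. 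We may assume $\gamma>\epsilon$, since otherwise $T_\epsilon(\gamma)=0$ and $\mathcal{U}_\gamma^\epsilon=\emptyset$. Throughout, we work on the almost sure event $\mathcal{A}$ of Remark \ref{ascont}, on which, for every $q\in\mathbb{Q}_+$, the path $s\mapsto\rho_s(q)$ is continuous and solves (\ref{sdebessel}) on $[0,T_0(q))$; we also use that $x\mapsto\rho_t(x)$ is nondecreasing and that $\rho_t(x)=\inf_{q\in\mathbb{Q}_+,\,q\ge x}\rho_t(q)$ (Proposition \ref{modif}).

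Fix $(t_0,x_0)\in\mathcal{U}_\gamma^\epsilon$ and pick $t_1\in(t_0,T_\epsilon(\gamma))$. By continuity of $s\mapsto\rho_s(\gamma)$ and the definition of $T_\epsilon(\gamma)$, the quantity $m:=\inf_{s\in[0,t_1]}\rho_s(\gamma)$ satisfies $m>\epsilon>0$, and by Lemma \ref{weakcomp}, $\rho_s(q)\ge\rho_s(\gamma)\ge m$ for every rational $q\ge\gamma$ and every $s\in[0,t_1]$, so in particular $t_1<T_0(q)$ and $\rho(q)$ solves (\ref{sdebessel}) on $[0,t_1]$. The next step is a Lipschitz-in-space estimate, uniform in $s\in[0,t_1]$: for rationals $\gamma\le q\le q'$, the difference $D_s:=\rho_s(q')-\rho_s(q)$ has no Brownian part, is nonnegative by comparison, and satisfies
\begin{equation*}
0\le D_s=(q'-q)+\frac{\delta-1}{2}\int_0^s\Bigl(\frac{1}{\rho_u(q')}-\frac{1}{\rho_u(q)}\Bigr)du\le(q'-q)+\frac{|\delta-1|}{2m^2}\int_0^s D_u\,du,\qquad s\in[0,t_1],
\end{equation*}
because $\bigl|\rho_u(q')^{-1}-\rho_u(q)^{-1}\bigr|=D_u/\bigl(\rho_u(q)\rho_u(q')\bigr)\le D_u/m^2$. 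Gronwall's lemma gives $D_s\le(q'-q)e^{L_mt_1}$ with $L_m:=|\delta-1|/(2m^2)$. For arbitrary $x,x'\in(\gamma,\infty)$ with $x\le x'$, sandwiching $\rho_s(q)\le\rho_s(x)\le\rho_s(x')\le\rho_s(q')$ for rationals $\gamma<q\le x$ and $x'\le q'$, and letting $q\uparrow x$, $q'\downarrow x'$ (allowed since $x>\gamma$), we obtain $0\le\rho_s(x')-\rho_s(x)\le(x'-x)e^{L_mt_1}$ for all $s\in[0,t_1]$.

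To conclude, I upgrade this to joint continuity. For any $x\in(\gamma,\infty)$ and rationals $q_n\downarrow x$, the previous estimate shows that $\rho_\cdot(q_n)\to\rho_\cdot(x)$ uniformly on $[0,t_1]$, hence $s\mapsto\rho_s(x)$ is continuous on $[0,t_1]$. Thus on $[0,t_1]\times(\gamma,\infty)$ the map $(t,x)\mapsto\rho_t(x)$ is continuous in $t$ for each fixed $x$ and is $e^{L_mt_1}$-Lipschitz in $x$ uniformly in $t$; a routine argument based on the triangle inequality (writing $|\rho_t(x)-\rho_{t_0}(x_0)|\le|\rho_t(x)-\rho_t(q_0)|+|\rho_t(q_0)-\rho_{t_0}(q_0)|+|\rho_{t_0}(q_0)-\rho_{t_0}(x_0)|$ for a rational $q_0\in(\gamma,\infty)$ close to $x_0$, bounding the first and third terms by the Lipschitz estimate and the middle one by continuity of $\rho_\cdot(q_0)$) then gives joint continuity at $(t_0,x_0)$. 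Since $(t_0,x_0)\in\mathcal{U}_\gamma^\epsilon$ and $t_1\in(t_0,T_\epsilon(\gamma))$ were arbitrary, continuity holds throughout $\mathcal{U}_\gamma^\epsilon$.

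I expect the main obstacle to be organisational rather than conceptual: one must localise at the random horizon $t_1<T_\epsilon(\gamma)$ instead of a deterministic time, transfer the space-Lipschitz bound from the rational grid to all $x$ using only the monotone modification of Proposition \ref{modif} (bearing in mind that joint continuity of $t\mapsto\rho_t(x)$ for irrational $x$ is precisely what is being proved, cf.\ Remark \ref{ascont}), and check that all the exceptional null sets involved can be collected into a single one, which is possible because only countably many rational parameters intervene.
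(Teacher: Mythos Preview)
Your proof is correct and follows essentially the same route as the paper: reduce to a single cell $\mathcal{U}_\gamma^\epsilon$, use the uniform lower bound $\rho_s(\gamma)\geq\epsilon$ on $[0,T_\epsilon(\gamma))$ to make the drift Lipschitz, apply Gr\"onwall for the space-Lipschitz estimate on the rational grid, then extend to all $x>\gamma$ by monotonicity and density. The only cosmetic differences are that the paper works directly with the lower bound $\epsilon$ (rather than your $m>\epsilon$ on a compact subinterval) and records an explicit time-modulus $|\rho_s(y)-\rho_t(y)|\le\frac{|\delta-1|}{2\epsilon}|s-t|+|B_s-B_t|$ from the SDE, whereas you obtain time-continuity for irrational $x$ as a uniform limit of the continuous paths $\rho_\cdot(q_n)$; both arguments are equivalent in strength.
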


\begin{proof}
By patching, it suffices to prove that, a.s., the function $(t,x) \mapsto {\rho}_{t}(x)$ is continuous on each $\mathcal{U}_{\gamma}^{\epsilon}$, where $\epsilon, \gamma \in \mathbb{Q}^{*}_{+}$. 

Fix  $\epsilon, \gamma \in \mathbb{Q}^{*}_{+}$, and let $x, y \in (\gamma, + \infty) \cap \mathbb{Q}$. We proceed to show that, a.s., for all $t \leq s < T_{\epsilon}(\gamma)$ the following inequality holds:
\begin{equation}
\label{continuity}
 |{\rho}_{t}(x) - {\rho}_{s}(y)| \leq   |x-y| \exp\left( \frac{|\delta-1|}{2\epsilon^{2}} t \right) +  \frac{|\delta-1|}{2\epsilon} |s-t| + |B_{s} - B_{t}| .
\end{equation}
Since $T_{\epsilon}(\gamma) < T_{0}(\gamma)$, a.s., for all $t \leq s \leq T_{\epsilon}(\gamma)$, we have:
\[ \forall  \tau \in [0,t], \qquad {\rho}_{\tau}(x) = x + \frac{\delta - 1}{2} \int_{0}^{\tau} \frac{du}{{\rho}_{u}(x)} + B_{\tau} \]
as well as
\[ \forall  \tau \in [0,s], \qquad {\rho}_{\tau}(y) = y + \frac{\delta - 1}{2} \int_{0}^{\tau} \frac{du}{{\rho}_{u}(y)} + B_{\tau} \]
and hence :
\[ \forall \tau \in [0,t], \qquad |\rho_{\tau}(x) - \rho_{\tau}(y)| \leq |x-y| + \frac{|\delta-1|}{2} \int_{0}^{\tau}\frac{ |\rho_{u}(x) - \rho_{u}(y)|}{\rho_{u}(x) \rho_{u}(y)} du . \]
By the monotonicity property of $\rho$, we have, a.s., for all $t,s$ as above and $u \in [0, s]$:
\begin{equation}
\label{bound_below}
 {\rho}_{u}(x) \wedge {\rho}_{u}(y) \geq {\rho}_{u}(\gamma) \geq \epsilon 
\end{equation}
so that:
\[ \forall \tau \in [0,t], \quad |{\rho}_{\tau}(x) - {\rho}_{\tau}(y)| \leq |x-y| + \frac{|\delta-1|}{2} \int_{0}^{\tau}\frac{ |{\rho}_{u}(x) - {\rho}_{u}(y)|}{\epsilon^{2}} du, \]
which, by Gr\"{o}nwall's inequality, implies that:
\begin{equation}
\label{ineq1}
 |{\rho}_{t}(x) - {\rho}_{t}(y)| \leq |x-y| \exp \left( \frac{|\delta-1|}{2\epsilon^{2}} t \right).
\end{equation}
Moreover, we have:
\[ {\rho}_{s}(y) - {\rho}_{t}(y) = \frac{\delta-1}{2} \int_{t}^{s} \frac{du}{{\rho}_{u}(y)} + B_{s} - B_{t} \]
which, by (\ref{bound_below}), entails the inequality:
\begin{equation}
\label{ineq2}
|{\rho}_{s}(y) - {\rho}_{t}(y)| \leq  \frac{|\delta-1|}{2\epsilon} |s-t| + |B_{s} - B_{t}| .
\end{equation}
Putting inequalities (\ref{ineq1}) and (\ref{ineq2}) together yields the claimed inequality (\ref{continuity}). Hence, we have, a.s., for all rationals $x, y > \gamma$ and all $t \leq s < T_{\epsilon}(\gamma)$:
\[ |{\rho}_{t}(x) - {\rho}_{s}(y)|  \leq   |x-y| \exp \left( \frac{|\delta-1|}{2\epsilon^{2}} t  \right) +  \frac{\delta-1}{2} |s-t| + |B_{s} - B_{t}| \]
and, by density of $\mathbb{Q} \cap (\gamma, + \infty)$ in $(\gamma, + \infty)$, this inequality remains true for all $x, y > \gamma$. Since, a.s., $t \mapsto B_{t}$ is continuous on $\mathbb{R}_{+}$, the continuity of $\rho$ on $\mathcal{U}_{\gamma}^{\epsilon}$ is proved.
\end{proof}

\begin{cor}
\label{assde}
Almost-surely, we have:
\begin{equation}
\label{assdeeq}
 \forall x \geq 0, \quad \forall t \in [0, T_{0}(x)), \qquad {\rho}_{t}(x) = x + \frac{\delta - 1}{2} \int_{0}^{t} \frac{du}{{\rho}_{u}(x)} + B_{t} .
 \end{equation}
\end{cor}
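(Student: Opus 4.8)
The plan is to reduce the statement to the rational case --- which is exactly the content of Remark~\ref{ascont} --- and then to reach an arbitrary starting point $x$ by approximating it \emph{from above} by rationals, exploiting the monotonicity of the modification $\tilde\rho$ from Proposition~\ref{modif}.

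Concretely, I would first fix an almost sure event $\Omega_0$ on which, simultaneously: the conclusion of Remark~\ref{ascont} holds, so that for every $x\in\mathbb{Q}_+$ the path $t\mapsto\rho_t(x)$ is continuous, coincides with $\tilde\rho_t(x)$, and solves \eqref{sdebessel} on $[0,T_0(x))$; and the comparison \eqref{comp} of Proposition~\ref{modif} holds for all $x\le x'$, which in particular forces $x\mapsto T_0(x)$ to be non-decreasing. Since $\Omega_0$ does not depend on $(t,x)$, it suffices to check the identity \eqref{assdeeq} on $\Omega_0$ for each fixed $x\ge0$ and each fixed $t\in[0,T_0(x))$.

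On $\Omega_0$, the case $x\in\mathbb{Q}_+$ is immediate, so suppose $x$ is irrational (hence $x>0$), fix $t<T_0(x)$, and pick rationals $x_n\downarrow x$. Since $x_n>x$, monotonicity of $T_0$ gives $T_0(x_n)\ge T_0(x)>t$, so Remark~\ref{ascont} applies at time $t$ to each $x_n$:
\[ \rho_t(x_n) \;=\; x_n + \frac{\delta-1}{2}\int_0^t\frac{du}{\rho_u(x_n)} + B_t. \]
Now I would let $n\to\infty$. The defining formula $\rho_s(y)=\inf_{q\in\mathbb{Q}_+,\,q\ge y}\rho_s(q)$ together with \eqref{comp} shows $\rho_s(x_n)\downarrow\rho_s(x)$ for every $s\ge0$; in particular $\rho_t(x_n)\to\rho_t(x)$. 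For the drift term, note that $\rho_u(x)>0$ for all $u\in[0,t]$ (otherwise $T_0(x)\le u\le t$), so $1/\rho_u(x_n)\uparrow 1/\rho_u(x)$ pointwise on $[0,t]$, and the monotone convergence theorem gives $\int_0^t du/\rho_u(x_n)\uparrow\int_0^t du/\rho_u(x)\in[0,+\infty]$. When $\delta\ne1$, rewriting the displayed identity as $\int_0^t du/\rho_u(x_n)=\tfrac{2}{\delta-1}\bigl(\rho_t(x_n)-x_n-B_t\bigr)$ exhibits the left-hand side as a sequence converging to the finite number $\tfrac{2}{\delta-1}\bigl(\rho_t(x)-x-B_t\bigr)$; hence the monotone limit $\int_0^t du/\rho_u(x)$ is finite and equal to it, and passing to the limit in the identity yields \eqref{assdeeq} at $(t,x)$. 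The case $\delta=1$ is trivial.

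I do not expect a deep obstacle here: the argument is a soft limiting one. The single point that requires care is the finiteness of $\int_0^t du/\rho_u(x)$, and the trick is that this is obtained \emph{a posteriori} from the equation rather than proved directly. Equally important is that one must approximate $x$ from above --- so that $T_0(x_n)\ge T_0(x)>t$ and Remark~\ref{ascont} can legitimately be invoked at $(t,x_n)$ --- and read the convergence $\rho_s(x_n)\downarrow\rho_s(x)$ off the explicit infimum defining $\tilde\rho$ in Proposition~\ref{modif}, rather than relying on any joint continuity of the Bessel flow, which (see Remark~\ref{rk5}) is delicate for $\delta<1$.
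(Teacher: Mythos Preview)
Your argument is correct and is essentially the paper's own proof: fix an almost-sure event combining Remark~\ref{ascont} with the monotonicity of Proposition~\ref{modif}, approximate $x$ from above by rationals $r\downarrow x$ so that $T_0(r)\ge T_0(x)>t$, and pass to the limit in the SDE via monotone convergence of $\int_0^t du/\rho_u(r)$. Your explicit verification that the limiting integral is finite (by solving the equation for it when $\delta\neq 1$) is a point the paper leaves implicit, but otherwise the two proofs coincide.
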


\begin{rk}
We have already remarked in Section \ref{basicsection} that, for all fixed $x \geq 0$, the process $(\rho_{t}(x))_{t \geq 0}$ satisfies the SDE (\ref{sdebessel}). By contrast, the above Corollary shows the stronger fact that, considering the modification $\tilde{\rho}$ of the Bessel flow constructed in Proposition \ref{modif} above, a.s., for \textit{each} $x \geq 0$, the path $(\tilde{\rho}_{t}(x))_{t \geq 0}$  still satisfies relation (\ref{sdebessel}).
\end{rk}

\begin{proof}
Consider an almost-sure event $\mathcal{A} \in \mathcal{F}$ as in Remark \ref{ascont}. On the event $\mathcal{A}$, for all $r \in \mathbb{Q}_{+}$, we have:
\[ \forall t \in [0, T_{0}(r)), \qquad {\rho}_{t}(r) = r + \frac{\delta-1}{2}\int_{0}^{t} \frac{du}{{\rho}_{u}(r)} + B_{t}. \]
Denote by $\mathcal{B} \in \mathcal{F}$ any almost-sure event on which $\rho$ satisfies the monotonicity property (\ref{comp}). We show that, on the event $\mathcal{A} \cap \mathcal{B}$, the property (\ref{assdeeq}) is satisfied. 

Suppose $\mathcal{A} \cap \mathcal{B}$ is fulfilled, and let $x \geq 0$. Then for all $r \in \mathbb{Q}$ such that $r \geq x$, we have:
\[ \forall t \geq 0, \qquad  \rho_{t} (x) \leq \rho_{t}(r) \]
so that $T_{0}(r) \geq T_{0}(x)$. Hence, for all $t \in [0,T_{0}(x))$, we have in particular $t \in [0, T_{0}(r))$, so that:
\[{\rho}_{t}(r) = r + \frac{\delta-1}{2}\int_{0}^{t} \frac{du}{{\rho}_{u}(r)} + B_{t} . \]
Since, for all $u \in [0,t]$, ${\rho}_{u}(r) \downarrow {\rho}_{u}(x)$ as $r \downarrow x$ with $r \in \mathbb{Q}$, by the monotone convergence theorem, we deduce that:
\[ \int_{0}^{t} \frac{du}{{\rho}_{u}(r)} \longrightarrow \int_{0}^{t} \frac{du}{{\rho}_{u}(x)} \]
as $r \downarrow x$ with $r \in \mathbb{Q}$. Hence, letting $r \downarrow x$ with $r \in \mathbb{Q}$ in the above equation, we obtain:
\[ {\rho}_{t}(x) = x + \frac{\delta-1}{2}\int_{0}^{t} \frac{du}{{\rho}_{u}(x)} + B_{t} . \]
This yields the claim.
\end{proof}

One of the main difficulties for proving Proposition \ref{derflow} arises from the behavior of $\rho_{t}(x)$ at $t= T_{0}(x)$. However we will circumvent this problem by working away from the event $t = T_{0}(x)$. To do so, we will make use of the following property.

\begin{lm}
Let $\delta <2$ and $x \geq 0$. Then the function $y \mapsto T_{0}(y)$ is a.s. continuous at $x$.
\end{lm}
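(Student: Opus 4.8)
The plan is to use the monotonicity of $y\mapsto T_{0}(y)$ to reduce the almost-sure continuity at $x$ to the continuity of a deterministic function, inserting a truncation to cope with the fact that $T_{0}$ is not integrable.

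Recall that, almost surely, $y\mapsto T_{0}(y)$ is non-decreasing (by the monotonicity of the flow, Proposition~\ref{modif}); hence the limits
\[
T_{0}(x^{-}):=\lim_{y\uparrow x}T_{0}(y)=\sup_{q\in\mathbb{Q},\,q<x}T_{0}(q),\qquad T_{0}(x^{+}):=\lim_{y\downarrow x}T_{0}(y)=\inf_{q\in\mathbb{Q},\,q>x}T_{0}(q)
\]
exist almost surely and satisfy $T_{0}(x^{-})\leq T_{0}(x)\leq T_{0}(x^{+})$, so it is enough to prove that $T_{0}(x^{-})=T_{0}(x^{+})$ a.s.\ (for $x=0$ only the right limit is relevant, and the argument below applies unchanged, giving $T_{0}(0^{+})=0=T_{0}(0)$). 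Fix $N>0$. As $T_{0}(q)\wedge N$ is monotone in $q$ and bounded by $N$, monotone convergence along rationals yields
\[
\mathbb{E}\bigl[T_{0}(x^{+})\wedge N\bigr]=\lim_{q\downarrow x}\mathbb{E}\bigl[T_{0}(q)\wedge N\bigr],\qquad \mathbb{E}\bigl[T_{0}(x^{-})\wedge N\bigr]=\lim_{q\uparrow x}\mathbb{E}\bigl[T_{0}(q)\wedge N\bigr].
\]

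For each fixed $q\geq 0$ the process $(\tilde{\rho}^{\delta}_{t}(q))_{t\geq 0}$ has the law of a $\delta$-dimensional Bessel process started at $q$, so $\Phi(q):=\mathbb{E}[T_{0}(q)\wedge N]$ is a deterministic function of $q$. By Brownian scaling of the Bessel process, $T_{0}(q)$ has the same law as $q^{2}T_{0}(1)$ for $q>0$, whence
\[
\Phi(q)=\mathbb{E}\bigl[(q^{2}T_{0}(1))\wedge N\bigr]=q^{2}\,\psi\!\left(N/q^{2}\right),\qquad \psi(s):=\mathbb{E}\bigl[T_{0}(1)\wedge s\bigr].
\]
The function $\psi$ is continuous (indeed $1$-Lipschitz, since $\psi'(s)=\mathbb{P}(T_{0}(1)>s)\in[0,1]$), so $\Phi$ is continuous on $(0,\infty)$; moreover $\delta<2$ gives $T_{0}(1)<\infty$ a.s.\ (Section~\ref{basicsection}), so $\mathbb{P}(T_{0}(1)>s)\to 0$ and hence $\psi(s)=o(s)$ as $s\to\infty$, which forces $\Phi(q)=q^{2}\psi(N/q^{2})\to 0=\Phi(0)$ as $q\downarrow 0$. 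Thus $\Phi$ is continuous on $[0,\infty)$, and therefore $\lim_{q\downarrow x}\Phi(q)=\Phi(x)=\lim_{q\uparrow x}\Phi(q)$, i.e.\ $\mathbb{E}[T_{0}(x^{+})\wedge N]=\mathbb{E}[T_{0}(x^{-})\wedge N]$.

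Since $T_{0}(x^{-})\wedge N\leq T_{0}(x^{+})\wedge N$ almost surely and the two random variables have the same finite expectation, they coincide a.s.; letting $N\uparrow\infty$ — and noting that $T_{0}(x^{+})\leq T_{0}(q')<\infty$ a.s.\ for any fixed rational $q'>x$ — we obtain $T_{0}(x^{-})=T_{0}(x^{+})$ a.s., so $y\mapsto T_{0}(y)$ is a.s.\ continuous at $x$. All the stochastic-flow content of the argument is packaged into the monotonicity of Proposition~\ref{modif}, which both produces $T_{0}(x^{\pm})$ and licenses the exchange of limit and expectation; the only points requiring care are the truncation at $N$, made necessary by the non-integrability of $T_{0}$, and the continuity of $\Phi$ up to and including $q=0$, which is precisely where $\delta<2$ enters.
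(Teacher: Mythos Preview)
Your proof is correct and follows essentially the same strategy as the paper: use monotonicity to form the one-sided limits $T_{0}(x^{\pm})$, invoke the Bessel scaling $T_{0}(y)\overset{(d)}{=}y^{2}T_{0}(1)$ to show that a bounded monotone functional of $T_{0}(\cdot)$ is continuous in the starting point, and conclude from equal expectations plus the a.s.\ ordering. The only cosmetic difference is the choice of functional: the paper applies the bounded strictly decreasing map $t\mapsto e^{-t}$ (so dominated convergence is immediate and no separate analysis at $q=0$ is needed), whereas you truncate at level $N$ and then let $N\uparrow\infty$.
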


\begin{proof}
The function $y \mapsto T_{0}(y)$ is nondecreasing over $\mathbb{R}_{+}$. Hence, if $x > 0$, it has left- and right-sided limits at x,  $T_{0}(x^{-})$ and  $T_{0}(x^{+})$, satisfying:
\begin{equation}
\label{leftrightlmts}
T_{0}(x^{-}) \leq T_{0}(x) \leq T_{0}(x^{+}). 
\end{equation}
Similarly, if $x = 0$, there exists a right-sided limit $T_{0}(0^{+})$ satisfying $T_{0}(0) \leq T_{0}(0^{+})$. Suppose, e.g., that $x > 0$. Then we have:
\begin{equation}
\label{laplacetransf}
\mathbb{E} \left( e^{- T_{0}(x^{+})} \right) \leq \mathbb{E} \left(e^{- T_{0}(x)} \right) \leq \mathbb{E} \left(e^{- T_{0}(x^{-})} \right).
\end{equation}
Now, by the scaling property of the Bessel processes (see, e.g., Remark 3.7 in \cite{zambotti2017random}), for all $y \geq 0$,  the following holds:
\[ (y \rho_{t}(1))_{t \geq 0}  \overset{(d)}{=} (\rho_{y^{2}t}(y))_{t \geq 0}, \]
so that $T_{0}(y) \overset{(d)}{=} y^{2} T_{0}(1)$. Therefore, using the dominated convergence theorem, we have:
\begin{align*}
\mathbb{E} \left( e^{- T_{0}(x^{+})} \right) &= \underset{y \downarrow x}{\lim} \ \mathbb{E} \left( e^{- T_{0}(y)} \right) \\
&= \underset{y \downarrow x}{\lim} \ \mathbb{E} \left( e^{- y^{2} T_{0}(1)} \right) \\
&= \mathbb{E} \left( e^{- x^{2} T_{0}(1)} \right) \\
&= \mathbb{E} \left( e^{- T_{0}(x)} \right).
\end{align*}
Similarly, we have $\mathbb{E} \left( e^{- T_{0}(x^{-})} \right) = \mathbb{E} \left( e^{- T_{0}(x)} \right)$. Hence the inequalities (\ref{laplacetransf}) are actually equalities; recalling the original inequality (\ref{leftrightlmts}), we deduce that $T_{0}(x^{-}) = T_{0}(x) = T_{0}(x^{+})$ a.s.. Similarly, if $x=0$, we have $T_{0}(0) = T_{0}(0^{+})$ a.s. 
\end{proof} 

Before proving Proposition \ref{derflow}, we need a coalescence lemma, which will help us prove that the derivative of $\rho_{t}$ at $x$ is $0$ if $t > T_{0}(x)$:

\begin{lm}
\label{traj}
Let $x, y \geq 0$, and let $\tau$ be a nonnegative $(\mathcal{F}_{t})_{t \geq 0}$-stopping time. Then, almost-surely:
\[\rho_{\tau}(x) = \rho_{\tau}(y) \quad  \Rightarrow \quad  \forall s \geq \tau, \ \rho_{s}(x)  =  \rho_{s}(y) . \]
\end{lm}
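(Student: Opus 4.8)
The plan is to exploit the strong Markov property at $\tau$ together with pathwise uniqueness for the squared Bessel SDE \eqref{sde}, reducing everything to the statement that two solutions of \eqref{sde} driven by the same Brownian motion and agreeing at a stopping time must agree thereafter. First I would reduce to the squared Bessel picture: since $\rho^{\delta}_{s}(x) = \sqrt{X^{\delta}_{s}(x)}$ and $z \mapsto \sqrt{z}$ is injective on $\mathbb{R}_{+}$, the event $\{\rho_{\tau}(x) = \rho_{\tau}(y)\}$ coincides with $\{X^{\delta}_{\tau}(x) = X^{\delta}_{\tau}(y)\}$, and likewise for the conclusion; so it suffices to prove the coalescence statement for $X := X^{\delta}(x)$ and $X' := X^{\delta}(y)$, both of which satisfy \eqref{sde} (with initial conditions $x^2$ and $y^2$) driven by the \emph{same} Brownian motion $B$.

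Next I would localize. Fix $n \ge 1$ and let $\sigma_{n} := \inf\{ s \ge \tau : X_{s} \vee X'_{s} \ge n \}$; by continuity of the paths, $\sigma_n \uparrow \infty$ a.s., so it is enough to prove $X = X'$ on $[\tau, \tau \vee \sigma_n]$ for each $n$ on the event $A := \{X_{\tau} = X'_{\tau}\}$. On $A$, for $s \in [\tau, \sigma_n]$ both processes satisfy
\begin{equation*}
X_{s} - X'_{s} = 2\int_{\tau}^{s} \left( \sqrt{X_{u}} - \sqrt{X'_{u}} \right) dB_{u}.
\end{equation*}
Now I would run the Yamada--Watanabe argument: the function $z \mapsto \sqrt{z}$ is $\tfrac{1}{2}$-Hölder, and one builds the usual sequence of functions $\phi_{k}$ with $\phi_{k}(z) \to |z|$, $\phi_{k}'' (z) \le \tfrac{2}{k\, |z|}\mathbf{1}_{\{a_{k} \le |z| \le a_{k-1}\}}$ (with $a_k \downarrow 0$ chosen so that $\int_{a_k}^{a_{k-1}} \frac{dz}{z} = k$), apply Itô's formula to $\phi_{k}(X_{s} - X'_{s})$, and use $(\sqrt{u}-\sqrt{v})^{2} \le |u-v|$ to see that the bounded-variation term is controlled by $\tfrac{2}{k}$; taking expectations and then $k \to \infty$ gives $\mathbb{E}[\,|X_{s\wedge\sigma_n} - X'_{s\wedge\sigma_n}|\, \mathbf{1}_{A}\,] = 0$ for every $s$. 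Since $A \in \mathcal{F}_{\tau}$, this is legitimate (the stochastic integral above is a genuine martingale once stopped at $\sigma_n$). Letting $n \to \infty$ and using path-continuity yields $X_{s} = X'_{s}$ for all $s \ge \tau$ on $A$, almost surely, which is the claim.

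The main obstacle — or rather the only point requiring care — is the presence of the stopping time $\tau$ in the lower limit of integration: one must make sure that $\left(\int_{\tau}^{s\wedge\sigma_n}(\sqrt{X_u}-\sqrt{X'_u})\,dB_u\right)_{s}$ is a true martingale so that expectations kill it, and that conditioning on the $\mathcal{F}_\tau$-event $A$ is harmless; both are handled by the localization through $\sigma_n$ (which bounds the integrand) together with the optional-stopping/strong-Markov machinery. Alternatively, one could invoke the strong Markov property directly: conditionally on $\mathcal{F}_{\tau}$, on $A$ the pair $(X_{\tau+\cdot}, X'_{\tau+\cdot})$ consists of two squared Bessel processes of dimension $\delta$ started from the \emph{same} point and driven by the same Brownian motion $(B_{\tau+\cdot}-B_{\tau})$, so pathwise uniqueness for \eqref{sde} — which holds for all $\delta \ge 0$, again by Yamada--Watanabe — forces them to coincide. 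Either route is routine once the Yamada--Watanabe estimate for $\sqrt{\cdot}$ is in hand; I would present the strong-Markov version for brevity.
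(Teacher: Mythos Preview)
Your proposal is correct and follows essentially the same approach as the paper: reduce to the squared Bessel processes $X^{\delta}(x)$ and $X^{\delta}(y)$, observe that on the event $\{\rho_{\tau}(x)=\rho_{\tau}(y)\}$ both satisfy the same SDE \eqref{sde} on $[\tau,\infty)$ with the same initial value and the same driving Brownian motion, and invoke pathwise uniqueness. The only difference is one of presentation: the paper dispatches pathwise uniqueness in one line by citing Theorem~(3.5), Chapter~IX of \cite{revuz2013continuous}, whereas you spell out the Yamada--Watanabe localization argument and discuss the strong Markov alternative explicitly.
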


\begin{proof}
On the event $\{ \rho_{\tau}(x) = \rho_{\tau}(y) \}$, the processes $(X^{\delta}_{t}(x))_{t \geq 0}  := (\rho_{t}(x)^{2})_{t \geq 0}$ and $(X^{\delta}_{t}(y))_{t \geq 0} := (\rho_{t}(y)^{2})_{t \geq 0}$ both satisfy, on $[\tau, + \infty)$, the SDE:
\[ X_{t} =  \rho_{\tau}(x)^{2} + 2 \int_{\tau}^{t} \sqrt{X_{s}} dB_{s} + \delta (t-\tau) . \]
By pathwise uniqueness of this SDE (see \cite{revuz2013continuous}, Theorem (3.5), Chapter IX), we deduce that, a.s. on the event $\{ \rho_{\tau}(x) = \rho_{\tau}(y) \}$,  $X_{t}(x) = X_{t}(y)$, hence $\rho_{t}(x)  =  \rho_{t}(y)$ for all $t \geq \tau$.
\end{proof} 

Now we are able to prove Prop. \ref{derflow}.

\begin{proof}[Proof of Proposition \ref{derflow}]
Let $t > 0$ and $x > 0$ be fixed. First remark that:
\[ \mathbb{P}(T_{0}(x) = t) = 0. \]
Indeed, if $\delta > 0$, then :
\[ \mathbb{P} ( T_{0}(x) = t ) \leq  \mathbb{P} ( \rho_{t}(x) =0) \]
and the RHS is zero since the law of $\rho_{t}(x)$ has no atom on $\mathbb{R}_{+}$ (it has density $p^{\delta}_{t}(x,\cdot)$ w.r.t. Lebesgue measure on $\mathbb{R}_{+}$, where $p^{\delta}_{t}$ was defined in equation (\ref{sgpdensity}) above). On the other hand, if $\delta =0$, then $0$ is an absorbing state for the process $\rho$, so that, for all $s \geq 0$:
\[  \mathbb{P}(T_{0}(x) \leq s) = \mathbb{P} ( \rho_{s}(x) =0) \]
and the RHS is continuous in $s$ on $\mathbb{R}_{+}$, since it is given by $\exp(-\frac{x^{2}}{2s})$ (see \cite{revuz2013continuous}, Chapter XI, Corrolary 1.4). Hence, also in the case $\delta = 0$ the law of $T_{0}(x)$ has no atom on $\mathbb{R}_{+}$. Hence, a.s., either $t<T_{0}(x)$ or $t>T_{0}(x)$.

First suppose that  $t < T_{0}(x)$. A.s., the function $y \mapsto T_{0}(y)$ is continuous at $x$, so there exists a rational number $y \in [0,x)$ such that $t < T_{0}(y)$; since, by Remark (\ref{ascont}), $t \mapsto \rho_{t}(y)$ is continuous, there exists $\epsilon \in \mathbb{Q}_{+}^{*}$ such that $t < T_{\epsilon}(y)$. By monotonicity of $z \mapsto \rho(z)$, for all $s \in [0,t]$ and $z \geq y$,  we have:
\[\rho_{s}(z) \geq \rho_{s}(y) \geq \epsilon . \]
Hence, recalling Corollary \ref{assde}, for all $s \in [0,t]$ and $h \in \mathbb{R}$ such that $|h| < |x-y|$:
\[ \rho_{s}(x+h) = x + h + \int_{0}^{s} \frac{\delta-1}{2}\frac{du}{\rho_{u}(x+h)} + B_{s}. \]
Hence, setting $\eta^{h}_{s}(x):=\frac{\rho_{s}(x+h) - \rho_{s}(x)}{h}$,we have:
\[ \forall s \in [0,t], \qquad \eta^{h}_{s}(x) = 1 - \frac{\delta - 1}{2}  \int_{0}^{t} \frac{\eta^{h}_{u}(x)}{\rho_{u}(x) \rho_{u}(x+h)} du \]
so that :
\[ \eta^{h}_{t}(x) = \exp \left( \frac{1-\delta}{2} \int_{0}^{t} \frac{ds}{\rho_{s}(x) \rho_{s}(x+h)} \right). \]
Note that, for all $s \in [0,t]$ and $h \in \mathbb{R}$ such that $|h| < |x-y|$, we have $(s, x + h) \in [0, T_{\epsilon}(y)) \times (y, + \infty) \subset \mathcal{U}$. Hence, by Lemma \ref{bicont}, we have, for all $s \in [0,t]$ 
\[ \rho_{s}(x+h) \underset{h \to 0}{\longrightarrow} \rho_{s}(x) \]
with the domination property:
\[ \frac{1}{\rho_{s}(x) \rho_{s}(x+h)} \leq \epsilon^{-2} \]
valid for all $|h| < |x-y|$. Hence, by the dominated convergence theorem, we deduce that:
\[   \eta^{h}_{t}(x) \underset{h \to 0}{\longrightarrow} \exp \left( \frac{1-\delta}{2} \int_{0}^{t} \frac{ds}{\rho_{s}(x)^{2}} \right) \]
which yields the claimed differentiability of $\rho_{t}$ at $x$.

We now suppose that $t >T_{0}(x)$. Since the function $y \mapsto T_{0}(y)$ is a.s. continuous at $x$, a.s. there exists $y > x, \ y \in \mathbb{Q}$, such that $ t > T_{0}(y) $. By Remark (\ref{ascont}), the function $t \mapsto \rho_{t}(y)$ is continuous, so that $\rho_{T_{0}(y)}(y) = 0$.
By monotonicity of $z \mapsto \rho(z)$, we deduce that, for all $z \in [0,y]$, we have :
\[ \rho_{T_{0}(y)}(z) = 0 . \]
By Lemma \ref{traj}, we deduce that, leaving aside some event of proability zero,  all the trajectories $(\rho_{t}(z))_{t \geq 0}$ for $z \in [0,y]\cap \mathbb{Q}$ coincide from time $T_{0}(y)$ onwards. In particular, we have:
\[ \forall z \in [0,y]\cap \mathbb{Q}, \qquad \rho_{t}(z) = \rho_{t}(x) . \]
Since, moreover, the function $z \mapsto \rho_{t}(z)$ is nondecreasing, we deduce that it is constant on the whole interval $[0,y]$:
\[  \forall z \in [0,y], \qquad \rho_{t}(z) = \rho_{t}(x) . \] 
In particular, the function $z \mapsto \rho_{t}(z)$ has derivative $0$ at $x$. This concludes the proof.
\end{proof}

{\bf Acknowledgements.} I would like to thank Lorenzo Zambotti, my Ph.D. advisor, for all the time he patiently devotes in helping me with my research. I would also like to thank Thomas Duquesne and Nicolas Fournier, who helped me solve a technical problem, as well as Yves Le Jan for a helpful discussion on the Bessel flows of low dimension, and Lioudmila Vostrikova for answering a question on this topic.

\bibliography{Article-BEL-Bessel_2}

\end{document}